\newcommand\where{; \allowbreak \nonscript\; \mathopen{}}
\DeclarePairedDelimiterX\set[1]{\lbrace}{\rbrace}{\nonscript\,  #1 \nonscript\,}
\newcommand\restr[2]{{\left.\kern-\nulldelimiterspace #1 \right\rvert_{#2}}}
\newcommand{\ol}[1]{\overline{#1}}
\newcommand{\pardf}[2]{{\partial #1 / \partial #2}}
\newcommand{\lp}{\left(}
\newcommand{\rp}{\right)}
\newcommand{\lb}{\left[}
\newcommand{\rb}{\right]}
\newcommand{\ldb}{\llbracket}
\newcommand{\rdb}{\rrbracket}
\newcommand{\li}{\langle}
\newcommand{\ri}{\rangle}
\newcommand{\lv}{\lvert}
\newcommand{\rv}{\rvert}
\newcommand{\lV}{\lVert}
\newcommand{\rV}{\rVert}
\newcommand\bff{\bm{f}}
\newcommand\bn{\bm{n}}
\newcommand\br{\bm{r}}
\newcommand\bs{\bm{s}}
\newcommand\bv{\bm{v}}
\newcommand\bx{\bm{x}}
\newcommand\bzero{\bm{0}}
\newcommand\btau{\bm{\tau}}
\newcommand\bbR{\mathbb{R}}
\newcommand\cC{\mathcal{C}}
\newcommand\cF{\mathcal{F}}
\newcommand\cG{\mathcal{G}}
\newcommand\cN{\mathcal{N}}
\newcommand\cO{\mathcal{O}}
\newcommand\cQ{\mathcal{Q}}
\newcommand\cR{\mathcal{R}}
\newcommand\cS{\mathcal{S}}
\newcommand\cU{\mathcal{U}}
\newcommand\cV{\mathcal{V}}
\newcommand\bcR{\bm{\cR}}
\newcommand\bcS{\bm{\cS}}
\newcommand\hcF{\hat{\cF}}
\newcommand\sC{\mathscr{C}}
\newcommand\hp{\hat{p}}
\newcommand\hq{\hat{q}}
\newcommand\hu{\hat{u}}
\newcommand\hv{\hat{v}}
\newcommand\hmu{\hat{\mu}}
\newcommand\hpsi{\hat{\psi}}
\newcommand\tq{\tilde{q}}
\newcommand\tu{\tilde{u}}
\newcommand\tpsi{\tilde{\psi}}
\newcommand\half{\frac{1}{2}}
\newcommand\halff{{1/2}}
\newcommand\mhalff{{-1/2}}
\newcommand\mone{{-1}}
\newcommand{\nn}{\nonumber}
\newcommand\col{\colon}
\newcommand\dd{\mathop{}\!\mathrm{d}}
\newcommand\lap{\Delta}
\newcommand\grad{\nabla}
\newcommand\gradp{\grad^\perp}
\newcommand\gradd{\grad\cdot}
\let\div\relax\DeclareMathOperator{\div}{div}
\DeclareMathOperator{\curl}{curl}
\DeclareMathOperator{\myspan}{span}
\DeclareMathOperator*{\argmin}{argmin}
\DeclareMathOperator*{\loc}{loc}
\newcommand{\Hdiv}[1]{H(\div\where #1)}
\newcommand{\Hone}[2]{H^{1}_{0,#1}(#2)}
\newcommand{\Hmone}[2]{H^{-1}_{#1}(#2)}
\newcommand\wgradd{\grad_{\mathfrak{d}}\cdot}
\newcommand\cVh[1]{\cV^{\smallh}_{#1}}
\newcommand\scVh[1]{\cV^{\smallh}_{#1}}
\newcommand\smallDelta{\scalebox{0.5}{$\Delta$}}
\newcommand\smallh{h}
\newcommand{\TheTitle}{Least-Squares for Hyperbolic Balance Laws} 
\newcommand{\TheAuthors}{D. Z. Kalchev, T. A. Manteuffel}
\headers{\TheTitle}{\TheAuthors}
\title{{A Least-Squares Finite Element Method Based on the Helmholtz Decomposition for Hyperbolic Balance Laws}\thanks{This work was performed under the auspices of the U.S. Department of Energy under grant numbers (SC) DE-FC02-03ER25574, (NNSA) DE-NA0002376, and contract (NNSA) DE-AC52-07NA27344 (LLNL-JRNL-756408); Lawrence Livermore National Laboratory (LLNL) under contract B614452.}}
\author{
  Delyan Z. Kalchev\footnotemark[2]\ \footnotemark[3]
  \and
  Thomas A. Manteuffel\footnotemark[2]
}
\numberwithin{theorem}{section}
\numberwithin{equation}{section}
\newcommand{\blue}[1]{#1}
\begin{document}

\maketitle
\renewcommand{\thefootnote}{\fnsymbol{footnote}}
\footnotetext[2]{Department of Applied Mathematics, 526 UCB, University of Colorado at Boulder, Boulder, CO 80309-0526, USA (\email{delyan.kalchev@colorado.edu}, \email{tmanteuf@colorado.edu}).}
\footnotetext[3]{Center for Applied Scientific Computing, Lawrence Livermore National Laboratory, P.O. Box 808, L-561, Livermore, CA 94551, USA (\email{kalchev1@llnl.gov}).}

\begin{abstract}
In this paper, a least-squares finite element method for scalar nonlinear hyperbolic balance laws is proposed and studied. The approach is based on a formulation that utilizes an appropriate Helmholtz decomposition of the flux vector and is related to the standard notion of a weak solution. This relationship, together with a corresponding connection to negative-norm least-squares, is described in detail. As a consequence, an important numerical conservation theorem is obtained, similar to the famous Lax-Wendroff theorem. The numerical conservation properties of the method in this paper do not fall precisely in the framework introduced by Lax and Wendroff, but they are similar in spirit as they guarantee that when $L^2$ convergence holds, the resulting approximations approach a weak solution to the hyperbolic problem. The least-squares functional is continuous and coercive in an $H^{-1}$-type norm, but not $L^2$-coercive. Nevertheless, the $L^2$ convergence properties of the method are discussed. Convergence can be obtained either by an explicit regularization of the functional, that provides control of the $L^2$ norm, or by properly choosing the finite element spaces, providing implicit control of the $L^2$ norm. Numerical results for the inviscid Burgers equation with discontinuous source terms are shown, demonstrating the $L^2$ convergence of the obtained approximations to the physically admissible solution. The numerical method utilizes a least-squares functional, minimized on finite element spaces, and a Gauss-Newton technique with nested iteration. We believe that the linear systems encountered with this formulation are amenable to multigrid techniques and combining the method with adaptive mesh refinement would make this approach an efficient tool for solving balance laws (this is the focus of a future study).
\end{abstract}

\begin{keywords}
least-squares methods, negative-norm methods, finite element methods, hyperbolic balance laws, conservation laws, Burgers equation, weak solutions, Helmholtz decomposition, space-time discretization
\end{keywords}

\section{Introduction}

Hyperbolic conservation and balance laws arise often in practice, especially in problems of fluid mechanics \cite{LeVequeHCL,LeVequeHyperbolic,GodlewskiHCL,2012BalanceLaws}. A variety of numerical schemes have been developed for the solution of such problems. This includes finite difference and finite volume \cite{LeVequeHCL,LeVequeHyperbolic,GodlewskiHCL,2012BalanceLaws}, as well as finite element methods. In the field of finite elements, notably, discontinuous Galerkin (DG) methods (see \cite{DiPietroFEM} and the references therein) are often utilized for the solution of hyperbolic partial differential equations (PDEs) as well as SUPG (streamline-upwind/Petrov-Galerkin) methods \cite{1993SUPGCompress,2001ComparativeLS}. Furthermore, Guermond, Popov et al. proposed and studied the so-called entropy viscosity method (see \cite{2011EntropyViscNonlin,2013ImplEntropyVisc,2014MaxPreservingCons}) and Dobrev, Kolev, Rieben et al. developed and implemented approaches for the Euler equation of compressible hydrodynamics in a moving Lagrangian frame (cf., \cite{2011CurvilinearLagrangian,2012HOCurvilinearLagrangian}; see also \cite{2017DGtransport}). This paper focuses on least-squares finite element techniques. Least-squares methods \cite{BochevLSFEM} have been developed for a variety of problems, including linear \cite{2004LinearHyperbolic,2001ErrorLS,2001ComparativeLS,2018LLstarAndInverse} and nonlinear \cite{2005HdivHyperbolic} first-order hyperbolic PDEs; see also \cite{OlsonPhDthesis,2005LSShallowWater,1988LSHypSystems}. Those approaches utilize appropriate least-squares minimization principles to obtain finite element discretizations of PDEs. Computationally, the problem is reduced to solving linear algebraic systems with symmetric positive definite matrices associated with quadratic minimization problems. Least-squares functionals provide natural a posteriori error estimates for adaptive local refinement (ALR). Particularly, ALR in the setting of nested iteration with Gauss-Newton techniques and in concomitance with algebraic multigrid (AMG) linear solvers can provide efficient methods for solving nonlinear equations to a certain error tolerance, while optimizing the respective computational cost for obtaining the desired level of accuracy; see \cite{1997AMR,2008ACE,2011ACE}. These useful aspects of least-squares techniques, their general flexibility, and the good shock-capturing capabilities, demonstrated in \cite{2005HdivHyperbolic} and this paper, prompt the present work on developing a least-squares discretization method for hyperbolic balance laws. In order to be used as a local error estimate in ALR (the focus of a future study), the least-squares functional proposed here requires an additional standard Poisson solve, while this solve is not needed for obtaining the final linear algebraic systems.

The notion of a weak solution \cite{1954WeakSols} is of fundamental importance for hyperbolic conservation and balance laws, since it allows the consideration of non-smooth solutions, which are of practical interest. In the numerical treatment of these problems, a related important property is that the obtained approximations, if they converge, approach a weak solution \cite{1960Conservation} of the respective hyperbolic PDE. Such a property is associated with the ability of the numerical method to correctly approximate weak solutions (i.e., solutions with discontinuities) to nonlinear problems \cite{1994NoncosSchemes,LeVequeHCL,LeVequeHyperbolic,GodlewskiHCL}. This is linked to the famous Lax-Wendroff theorem established in \cite{1960Conservation}. Based on that result, it has become standard, especially in the context of finite volume \cite{LeVequeHyperbolic,GodlewskiHCL} and DG finite element \cite{DiPietroFEM} methods, to consider so-called conservative schemes that posses a discrete conservation property. Such a conservation property in the Lax-Wendroff theorem provides a sufficient condition for approximating weak solutions to nonlinear hyperbolic PDEs. As demonstrated in \cite{2005HdivHyperbolic}, the discrete conservation property, while sufficient, is not necessary for obtaining convergence to a weak solution -- a fact that is also utilized in this paper. As in \cite{2005HdivHyperbolic}, the considerations here do not precisely abide by the framework provided by Lax and Wendroff in \cite{1960Conservation}. Namely, based on the utilization of an appropriate least-squares minimization principle, related to the notion of a weak solution, we establish the important and desired numerical conservation property that approximations obtained by the method of this paper approach a weak solution to the hyperbolic PDE of interest. This, together with the numerically demonstrated, in \cref{sec:numerical}, $L^2$ convergence to the physically admissible solution and good shock-capturing capabilities, motivates the derivation and consideration of the particular formulation in this paper.

The main contributions of this paper are summarized as follows. This work proposes and studies a general least-squares finite element formulation for scalar nonlinear hyperbolic balance laws, which is associated with the standard notion of a weak solution. The fundamental idea is related to and extends the considerations in \cite{2005HdivHyperbolic}. The method of this paper utilizes an appropriate Helmholtz decomposition of the flux function to obtain a least-squares functional. The particular Helmholtz decomposition, unlike \cite{2005HdivHyperbolic}, allows the accommodation of source terms and also a natural treatment of the inflow boundary conditions; see the end of \cref{ssec:helmholtz}. Any weak solution to the hyperbolic PDE yields a global minimum value\footnote{This value is implicitly obtainable from the properties of the functional, while it requires an additional Poisson solve to be explicitly evaluated.} of zero for the functional. If the Helmholtz decomposition is exact, the functional is equal to an $H^{-1}$ norm of the residual of the equation, which is, in turn, associated with a weak form of the PDE. In this case, any zero of the functional is a weak solution. If the Helmholtz decomposition is not exact, the functional remains coercive with respect to the $H^{-1}$ norm, but continuity holds up to the accuracy of the decomposition (\cref{thm:relation}). Minimization of the proposed functional, which is mesh-dependent and based on the appropriate Helmholtz decomposition, over a nested sequence of finite element spaces yields a sequence of approximations to the exact solution. Standard approximation properties yield convergence of the functional to its minimum value of zero with rates determined by the smoothness of the approximated functions in accordance with standard finite element theory (\cref{cor:zerominconv}). Under mild hypotheses on the flux function, if the sequence of approximations converges boundedly in the $L^2$ norm, the limit is a weak solution, showing that the method is conservative (\cref{thm:weakconserv}). The least-squares functional is continuous in $L^2$, but not coercive. Thus, in general, the convergence in $L^2$ is not guaranteed by means of standard finite element theory. A novel and unorthodox approach for the analysis of the convergence is developed. Convergence in $L^2$ of the method is proved (\cref{thm:basicconv}) under additional and reasonable assumptions. Namely, if the finite element spaces satisfy a simple inf-sup condition, which provides an implicit mesh-dependent $L^2$ coercivity with a bound $c h^\alpha$, and a basic approximation estimate holds with a bound $Ch^\varkappa$, then the relation $\varkappa > \alpha$ guarantees the $L^2$ convergence of the method with a rate $\cO(h^{\varkappa - \alpha})$. Furthermore, a ``restricted'' version of the inf-sup condition is suggested, that improves on the convergence result (\cref{thm:moreconv}). The implicit mesh-dependent $L^2$ coercivity was discussed in \cite{PhDthesis}. Also, a spacial functional regularization (\cref{ssec:reg}) is proposed that explicitly provides (mesh-dependent) $L^2$ coercivity (\cref{thm:regcoerc}). Numerical results (\cref{sec:numerical}) in this paper demonstrate that the $L^2$ convergence to the physically admissible limit (a theoretical proof of this is the focus of a future study) is obtained for both the regularized and non-regularized formulations.

The ideas in the paper, for clarity and simplicity, are presented in a two\-/dimensional setting (e.g., one spatial and one time, or two spatial dimensions). The considerations can be extended for a three- or four-dimensional setting by utilizing a corresponding Helmholtz decomposition. In particular, this may require four-dimensional finite elements, which is an active field of research; cf., \cite{4D} and the references therein.

The outline of the rest of the paper is the following. Basic notions and the utilized Helmholtz decomposition are presented in \cref{sec:basic}. \Cref{sec:balance} contains a basic overview of scalar hyperbolic balance laws. In \cref{sec:lsapproach}, the least-squares formulations of interest are introduced, and they are analyzed and studied in more detail in \cref{sec:analysis}, including numerical conservation (\cref{ssec:conservation}, \cref{thm:weakconserv}), the special regularization (\cref{ssec:reg}, \cref{thm:regcoerc}), and convergence properties (\cref{ssec:conv}, \cref{thm:basicconv,thm:moreconv}). \Cref{sec:numerical} is devoted to numerical results, employing nested iteration with uniform mesh refinement. The conclusions and future work are in the final \cref{sec:conclusions}.

\section{Basic definitions and the Helmholtz decomposition}
\label{sec:basic}

Here, basic notation and definitions are presented and the Helmholtz decomposition, that is studied in this paper, is stated.

Let $\Omega$ be an open, bounded, and simply connected subset of $\bbR^2$ with a Lipschitz-continuous boundary, $\Gamma = \partial \Omega$. In the context of time-dependent hyperbolic problems, $\bbR^2$ represents the space-time, i.e., it is the $tx$-space, where $t$ and $x$ are the independent variables. Accordingly, $\gradd$ denotes the space-time divergence, i.e., $\gradd \bv = \partial_t v_1 + \partial_x v_2$, for any appropriate vector field $\bv\col \Omega \to \bbR^2$, $\bv = [v_1, v_2]$. Similarly, $\grad$ and $\gradp$ are space-time differential operators defined as $\grad v = [\partial_t v, \partial_x v]$ and $\gradp v = [\partial_x v, -\partial_t v]$, for any appropriate scalar function $v\col \Omega \to \bbR$. 

Let $\Gamma_S \subset \Gamma$ be a portion of the boundary of $\Omega$ with a nonzero curve measure. The space of $H^1(\Omega)$ functions with vanishing traces on $\Gamma_S$ is denoted as $\Hone{\Gamma_S}{\Omega} = \set{\phi \in H^1(\Omega) \where \phi = 0 \text{ on } \Gamma_S}$. It is convenient to consider the $H^1(\Omega)$ seminorm: $\lv \phi \rv_1 = \lV \grad \phi \rV$, for all $\phi \in H^1(\Omega)$, where $\lV \cdot \rV$ denotes the norms in both $L^2(\Omega)$ and $[L^2(\Omega)]^2$. Owing to Poincar{\'e}'s inequality, $\lv \cdot \rv_1$ is a norm in $\Hone{\Gamma_S}{\Omega}$, equivalent to the $H^1(\Omega)$ norm. Therefore, in this paper, $\Hone{\Gamma_S}{\Omega}$ is endowed with the norm $\lv \cdot \rv_1$ and, clearly, it is a Hilbert space with respect to that norm.

It is customary to define the dual of a positive-order Sobolev space as a ``negative-order'' Sobolev space. Following this practice, the dual space of $\Hone{\Gamma_S}{\Omega}$ is denoted by $\Hmone{\Gamma_S}{\Omega}$ and it is endowed with the respective functional norm $\lV \ell \rV_{-1,\Gamma_S} = \sup_{\phi \in \Hone{\Gamma_S}{\Omega}} \lv \ell(\phi) \rv/\lv \phi \rv_{1}$, for all $\ell \in \Hmone{\Gamma_S}{\Omega}$, where, to simplify notation, it is understood that $\phi \ne 0$ in the supremum. In particular, in the special case when $\Gamma_S \equiv \Gamma$, the commonly used notation is $H^1_0(\Omega) = \Hone{\Gamma}{\Omega}$ and $H^{-1}(\Omega) = \Hmone{\Gamma}{\Omega}$.

The inner products in both $L^2(\Omega)$ and $[L^2(\Omega)]^2$ are denoted by $(\cdot,\cdot)$. Following the notation in \cite{GiraultFEM}, the inner product in $L^2(\Gamma)$ is denoted by $\li \cdot,\cdot \ri_\Gamma$. By extending, as customary, the $L^2(\Gamma)$ inner product into a duality pairing, $\li \cdot,\cdot \ri_\Gamma$ is also used to denote the duality pairing between $H^\mhalff(\Gamma)$ and $H^\halff(\Gamma)$, where $H^\halff(\Gamma)$ is the space of traces on $\Gamma$ of functions in $H^1(\Omega)$ and $H^\mhalff(\Gamma)$ is its dual.

The Sobolev space of square integrable vector fields on $\Omega$ with square integrable divergence is denoted as $\Hdiv{\Omega} = \set{\bv \in [L^2(\Omega)]^2 \where \gradd \bv \in L^2(\Omega)}$, where $\gradd \bv \in L^2(\Omega)$, for $\bv \in [L^2(\Omega)]^2$, is understood in the sense that there exists a (unique) function $v \in L^2(\Omega)$ such that
\begin{equation}\label{eq:distdiv}
-(\bv, \grad \phi) = (v, \phi),\quad\forall \phi \in H^1_0(\Omega),
\end{equation}
in which case $\gradd \bv = v \in L^2(\Omega)$; see \cite{GiraultFEM}.

Using the notation above, we can write the following Green's formula \cite{GiraultFEM}:
\begin{equation}\label{eq:greendiv}
(\bv, \grad \phi) + (\gradd \bv, \phi) = \li \bv\cdot\bn, \phi \ri_\Gamma, \quad \forall \bv \in \Hdiv{\Omega},\; \forall \phi\in H^1(\Omega),
\end{equation}
where $\bn$ is the unit outward normal to $\Gamma$.

Finally, let $\Gamma$ be split into two non-overlapping relatively open subcurves $\Gamma_1$ and $\Gamma_2$ of nonzero curve measures, i.e., $\Gamma = \ol{\Gamma_1} \cup \ol{\Gamma_2}$ and $\Gamma_1 \cap \Gamma_2 = \emptyset$. Also, $\Gamma_1$ and $\Gamma_2$ are assumed to consist of finite numbers of connected components. Similar to \cite[Sections 2 and 3 of Chapter I]{GiraultFEM}, the following Helmholtz decomposition can be obtained.

\begin{theorem}[Helmholtz decomposition]\label{thm:helmholtz}
For every $\bv \in [L^2(\Omega)]^2$, the $L^2$-orthogonal decomposition $\bv = \grad q + \gradp \psi$ holds, where $q \in \Hone{\Gamma_1}{\Omega}$ is the unique solution to
\begin{equation}\label{eq:weakgrad}
\text{Find } q \in \Hone{\Gamma_1}{\Omega}\col (\grad q, \grad\phi) = (\bv, \grad \phi), \quad \forall \phi \in \Hone{\Gamma_1}{\Omega},
\end{equation}
and $\psi \in \Hone{\Gamma_2}{\Omega}$ is the unique solution to
\begin{equation}\label{eq:weakgradp}
\begin{split}
\text{Find } &\psi \in \Hone{\Gamma_2}{\Omega}\col (\gradp \psi, \gradp\nu) = (\bv, \gradp \nu), \quad \forall \nu \in \Hone{\Gamma_2}{\Omega}.
\end{split}
\end{equation}
\end{theorem}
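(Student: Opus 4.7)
The plan is to adapt the standard Helmholtz-decomposition argument of Girault and Raviart to the present mixed boundary setting. I would proceed in three steps: (i) establish existence and uniqueness of the two potentials; (ii) prove $L^2$-orthogonality of the two subspaces $\grad(\Hone{\Gamma_1}{\Omega})$ and $\gradp(\Hone{\Gamma_2}{\Omega})$; and (iii) show that their orthogonal sum exhausts $[L^2(\Omega)]^2$, so that $\bv - \grad q - \gradp \psi$ must vanish.

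Step (i) is immediate via Lax-Milgram. For \eqref{eq:weakgrad}, the bilinear form $(\grad q, \grad \phi)$ is precisely the inner product on $\Hone{\Gamma_1}{\Omega}$ induced by $\lv \cdot \rv_1$, hence continuous and coercive with constant $1$; the right-hand side is bounded by Cauchy-Schwarz. For \eqref{eq:weakgradp}, the same argument applies once one observes that $\gradp$ is a pointwise $90^\circ$ rotation, so $\lV \gradp \nu \rV = \lV \grad \nu \rV = \lv \nu \rv_1$.

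For step (ii), I would use the identity $\gradd(\gradp \nu) = 0$ (distributionally) combined with Green's formula \eqref{eq:greendiv} to write
\begin{equation*}
(\grad \phi, \gradp \nu) = -(\phi, \gradd(\gradp \nu)) + \li \gradp \nu \cdot \bn, \phi \ri_\Gamma = \li \gradp \nu \cdot \bn, \phi \ri_\Gamma.
\end{equation*}
Because $\gradp$ rotates the outward normal $\bn$ into the unit tangent, $\gradp \nu \cdot \bn$ equals the tangential derivative of $\nu$ along $\Gamma$; the latter vanishes on $\Gamma_2$ since $\nu = 0$ there, while $\phi$ vanishes on $\Gamma_1$. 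Hence the pairing is zero and the two subspaces are $L^2$-orthogonal.

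Step (iii) is the core of the proof. Let $\bw \in [L^2(\Omega)]^2$ be orthogonal to both subspaces. Testing against $\phi \in H^1_0(\Omega) \subset \Hone{\Gamma_1}{\Omega}$ and invoking \eqref{eq:distdiv} gives $\gradd \bw = 0$; testing then against general $\phi \in \Hone{\Gamma_1}{\Omega}$ with arbitrary trace on $\Gamma_2$, together with \eqref{eq:greendiv}, yields the vanishing of the normal trace $\bw \cdot \bn$ on $\Gamma_2$ in $H^\mhalff$. Since $\Omega$ is simply connected, the classical stream-function lemma produces $\zeta \in H^1(\Omega)$, unique up to an additive constant, with $\bw = \gradp \zeta$. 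The normal-trace condition becomes the vanishing of the tangential derivative of $\zeta$ along $\Gamma_2$, so $\zeta$ is constant on each of the finitely many connected components of $\Gamma_2$. Once $\zeta$ is brought into $\Hone{\Gamma_2}{\Omega}$, testing the remaining orthogonality $(\gradp \zeta, \gradp \nu) = 0$ with $\nu = \zeta$ yields $\lV \gradp \zeta \rV = 0$, hence $\bw = 0$. Applied to $\bw = \bv - \grad q - \gradp \psi$, which by construction and step (ii) is orthogonal to both subspaces, this gives the claimed decomposition, while the $L^2$-orthogonality established in step (ii) simultaneously certifies that the decomposition is itself $L^2$-orthogonal.

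The main obstacle, I expect, is the last part of step (iii): moving from ``the tangential derivative of $\zeta$ vanishes on every component of $\Gamma_2$'' to ``$\zeta \in \Hone{\Gamma_2}{\Omega}$''. The stream function is only determined up to a single additive constant, whereas the tangential condition a priori prescribes independent constants on each connected component of $\Gamma_2$. The finite-component hypothesis, together with the simple connectedness of $\Omega$ and careful use of the Girault-Raviart trace machinery, is precisely what is needed to absorb these component-wise constants; this is the one place where the geometric assumptions on $\Gamma_1$, $\Gamma_2$, and $\Omega$ genuinely enter the proof.
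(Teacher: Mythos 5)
Your proposal follows the only route the paper itself indicates: the paper states \cref{thm:helmholtz} with a pointer to Sections 2 and 3 of Chapter I of Girault--Raviart and gives no proof, and your three steps (Lax--Milgram for the two potentials, $L^2$-orthogonality of $\grad(\Hone{\Gamma_1}{\Omega})$ and $\gradp(\Hone{\Gamma_2}{\Omega})$ via the tangential-derivative identity, and a stream-function argument for completeness) are precisely that standard argument adapted to the mixed boundary splitting. Steps (i) and (ii) are sound, modulo the usual trace technicalities at the interface points of $\Gamma_1$ and $\Gamma_2$.

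The obstacle you single out at the end of step (iii) is, however, a genuine gap, and it cannot be closed in the generality in which the theorem is stated. If $\Gamma_2$ has $k>1$ connected components, the condition that the tangential derivative of $\zeta$ vanish on $\Gamma_2$ forces $\zeta$ to equal a possibly different constant on each component, while the stream function carries only a single free additive constant; the remaining $k-1$ mismatches are not removable by any trace machinery, and they span a nontrivial $(k-1)$-dimensional orthogonal complement of $\grad(\Hone{\Gamma_1}{\Omega})\oplus\gradp(\Hone{\Gamma_2}{\Omega})$ consisting of harmonic fields. Concretely, take $\Omega=(0,1)^2$ in the $(t,x)$-plane, $\Gamma_1=\set{t=0}\cup\set{t=1}$ and $\Gamma_2=\set{x=0}\cup\set{x=1}$ (each with two components, as the stated hypotheses permit), and $\bw=[1,0]$. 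Then $(\bw,\grad\phi)=\int_\Omega \partial_t\phi\dd\bx=0$ for every $\phi\in\Hone{\Gamma_1}{\Omega}$ and $(\bw,\gradp\nu)=\int_\Omega \partial_x\nu\dd\bx=0$ for every $\nu\in\Hone{\Gamma_2}{\Omega}$, both by the fundamental theorem of calculus in the relevant direction; hence \eqref{eq:weakgrad} and \eqref{eq:weakgradp} return $q=0$ and $\psi=0$, and the decomposition misses $\bw$ entirely. Its stream function is $\zeta=x$, which equals $0$ on one component of $\Gamma_2$ and $1$ on the other --- exactly the unabsorbable constants you worried about. The theorem, and with it your step (iii), is correct under the additional hypothesis that $\Gamma_1$ and $\Gamma_2$ are each connected: then the single additive constant normalizes $\zeta$ to lie in $\Hone{\Gamma_2}{\Omega}$, you may test with $\nu=\zeta$, and the argument closes. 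That is the situation in the paper's application, where $\Gamma_1=\Gamma_C$ and $\Gamma_2=\Gamma_I$ are connected portions of the space-time boundary, but it does not follow from the stated assumption that the two pieces merely have finitely many components.
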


\begin{remark}\label{rem:stronggrad}
When $\bv \in \Hdiv{\Omega}$, using \eqref{eq:greendiv}, \eqref{eq:weakgrad} can be expressed as: Find $q \in \Hone{\Gamma_1}{\Omega}$ such that $(\grad q, \grad\phi) = \mathopen{}-(\gradd \bv, \phi) + \li \bv\cdot\bn, \phi \ri_\Gamma$, for all $\phi \in \Hone{\Gamma_1}{\Omega}$. This is interpreted as the weak formulation of the following elliptic PDE for $q$:
\[
\lap q = \gradd \bv \text{ in } \Omega,\quad\quad
q = 0 \text{ on } \Gamma_1,\quad\quad
\pardf{q}{\bn} = \bv\cdot\bn \text{ on } \Gamma_2,
\]
where $\lap \psi = \partial_{tt} \psi + \partial_{xx} \psi$. Similarly, the weak problem \eqref{eq:weakgradp} can be (formally) interpreted as the following elliptic PDE for $\psi$:
\[
-\lap \psi = \curl \bv \text{ in } \Omega,\quad\quad
\psi = 0 \text{ on } \Gamma_2,\quad\quad
\pardf{\psi}{\bn} = -\bv\cdot\btau \text{ on } \Gamma_1,
\]
where $\curl \bv = \partial_t v_2 - \partial_x v_1$, and $\btau = [-n_2,n_1]$ (here, $\bn = [n_1,n_2]$) is the unit tangent to the boundary.
\end{remark}

\section{Scalar hyperbolic balance laws}
\label{sec:balance}

This section provides an overview of the basic notions and properties associated with hyperbolic balance laws. This serves as a foundation for the sections that follow.

In this paper, we consider scalar \emph{hyperbolic balance laws} (see \cite{LeVequeHyperbolic}) of the form
\begin{subequations}\label{eq:balance}
\begin{alignat}{2}
\gradd \bff (u) &= r &&\text{ in } \Omega,\label{eq:balancePDE}\\
u &= g &&\text{ on } \Gamma_I,\label{eq:balanceBC}
\end{alignat}
\end{subequations}
where the flux vector $\bff\col \bbR \to \bbR^2$, $\bff \in [L^\infty_{\loc}(\bbR)]^2$, $\bff = [f_1, f_2]$, the source term $r \in L^2(\Omega)$, the inflow boundary data $g \in L^\infty(\Gamma_I)$ are given, and $u$ is the unknown dependent variable. Recall that $L^\infty_{\loc}(\bbR)$ is the space of measurable functions that are in $L^\infty(J)$, for all compact subsets $J \subset \bbR$. Clearly, under the assumptions on $\bff$ below, \eqref{eq:balancePDE} can be represented as a first-order quasilinear PDE for $u$. When $r \equiv 0$, \eqref{eq:balance} becomes a \emph{hyperbolic conservation law}. Here, $\Gamma_I$ denotes the inflow portion of the boundary to be considered in more detail below. Since the focus is on weak solutions to \eqref{eq:balance} (defined below), we only assume that the components of the flux vector, $\bff$, are locally Lipschitz-continuous on $\bbR$; that is, for every compact subset $J \subset \bbR$, there exists a constant $K_{\bff,J} > 0$, which generally depends on $\bff$ and $J$, such that
\begin{equation}\label{eq:lipschitz}
\lv f_i(\upsilon_1) - f_i(\upsilon_2) \rv \le K_{\bff,J} \lv \upsilon_1 - \upsilon_2 \rv, \quad\forall \upsilon_1,\upsilon_2\in J, \quad i=1, 2.
\end{equation}
Note that, by Rademacher's theorem (see, e.g., \cite{EvansPDE}), \eqref{eq:lipschitz} implies that $\bff$ is differentiable, in the classical sense, almost everywhere (a.e.) in $\bbR$ and $\bff' \in [L^\infty_{\loc}(\bbR)]^2$.

The assumption \eqref{eq:lipschitz} is reasonable and mild since it includes a wide class of problems. In particular, any continuously differentiable $\bff$ satisfies \eqref{eq:lipschitz}. In general, there are PDEs of interest with discontinuous $\bff$; see \cite{1996ConsLawDiscontFlux}. In this paper, for simplicity of the analysis, we concentrate on problems that satisfy \eqref{eq:lipschitz}. Nevertheless, the considered formulations are also sensible in the general case of discontinuous $\bff$. Currently, \eqref{eq:lipschitz} admits discontinuous $\bff'$, which admits a quasilinear PDE with discontinuous coefficients.

\begin{remark}
In view of \cite[Subsection 5.8.2b]{EvansPDE}, the above assumptions on the flux vector, $\bff$, are equivalent to the simple assumption $\bff \in [W^{1,\infty}_{\loc}(\bbR)]^2$.
\end{remark}

\begin{remark}\label{rem:iota}
In \blue{time-dependent} problems of type \eqref{eq:balance} that are of interest, it typically holds that $f_1 \equiv \iota$, where $\iota\col \bbR \to \bbR$ is the identity function $\iota(\upsilon) = \upsilon$, $\upsilon \in \bbR$, \blue{representing the time derivative, $\partial_t u$}. Nevertheless, it is convenient here to consider scalar balance laws in the general form \eqref{eq:balance}.
\end{remark}

The characteristics of \eqref{eq:balancePDE} have directions determined by $\bff'(\hu)$, where $\hu$ is an exact (weak) solution to \eqref{eq:balance} of interest; i.e., in the nonlinear case, the characteristics depend on the solution. This motivates the following definition of portions of the boundary, $\Gamma$, which also depend on the solution, in the nonlinear case.

\begin{definition}\label{def:portionsboundary}
Let $\hu$ be an exact (weak) solution to \eqref{eq:balance} of interest. The \emph{inflow} portion of the boundary of $\Omega$ is defined as (see \cite{OlsonPhDthesis,2005HdivHyperbolic,2001ErrorLS})
\[
\Gamma_I = \set*{\bx\in \Gamma \where \bff'(\hu)\cdot \bn < 0}.
\]
Similarly, the \emph{outflow} portion of the boundary and the portion that is tangential to the flow are, respectively,
\[
\Gamma_O = \set*{\bx\in \Gamma \where \bff'(\hu)\cdot \bn > 0},\quad \Gamma_T = \set*{\bx\in \Gamma \where \bff'(\hu)\cdot \bn = 0}.
\]
The complement (essentially) in $\Gamma$ of $\Gamma_I$ is $\Gamma_C = \Gamma_O \cup \Gamma_T = \set{\bx\in \Gamma \where \bff'(\hu)\cdot \bn \ge 0}$.
\end{definition}

This motivates the consideration of boundary conditions that are posed on the inflow boundary, $\Gamma_I$, in \eqref{eq:balanceBC}. Furthermore, a consistency requirement on the inflow data, $g$, is that $\bff'(g)\cdot \bn < 0$ on $\Gamma_I$. For the purpose of this paper, the portion $\Gamma_I$ of the boundary is considered known.

It is common in practical applications of PDEs of the type \eqref{eq:balance} to consider solutions that are piecewise continuously differentiable (shortly, piecewise $\cC^1$). Clearly, in general, such solutions are not classical. Therefore, we concentrate on the notion of a weak solution that allows for piecewise $\cC^1$ solutions; cf., \cite{GodlewskiHCL,LeVequeHyperbolic,LeVequeHCL,EvansPDE,1960Conservation,1954WeakSols}.

\begin{definition}
A function $\hu \in L^\infty(\Omega)$ is a \emph{weak solution} to \eqref{eq:balance} if it satisfies
\[
- \int_\Omega \bff(\hu)\cdot\grad\phi \dd\bx = \int_\Omega r\phi \dd\bx - \int_{\Gamma_I} \bff(g)\cdot\bn\phi \dd\sigma, \quad\forall\phi\in \cC^1(\ol{\Omega})\where \phi = 0 \text{ on } \Gamma_C.
\]
In terms of the notation in \cref{sec:basic}, by density, this can be equivalently expressed as
\begin{equation}\label{eq:weaksol}
-(\bff(\hu), \grad\phi) = (r, \phi) - \li \bff(g)\cdot\bn, \phi \ri_\Gamma, \quad\forall\phi\in H^1_{0,\Gamma_C}(\Omega).
\end{equation}
\end{definition}

The following lemma is obtained easily from the above definitions; see also \cite{GodlewskiHCL, 2005HdivHyperbolic}.

\begin{lemma}\label{lem:hdiv}
Let $\bff \in [L^\infty_{\loc}(\bbR)]^2$ satisfy \eqref{eq:lipschitz}. It holds that $\bff(\hu) \in \Hdiv{\Omega}$ and $\gradd \bff(\hu) = r$, for any weak solution, $\hu \in L^\infty(\Omega)$, to \eqref{eq:balance}.
\end{lemma}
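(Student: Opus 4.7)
The plan is a two-step verification: first establish that $\bff(\hu) \in [L^2(\Omega)]^2$, and then identify its distributional divergence with $r$ via the definition of a weak solution restricted to test functions supported inside $\Omega$.

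For the first step, I would observe that, since $\hu \in L^\infty(\Omega)$, its essential range is contained in some compact interval $J \subset \bbR$. Applying the local Lipschitz bound \eqref{eq:lipschitz} with, say, one endpoint at a fixed reference value $\upsilon_2 \in J$, gives
\[
\lv f_i(\hu(\bx)) \rv \le \lv f_i(\upsilon_2) \rv + K_{\bff,J} \lV \hu \rV_{L^\infty(\Omega)} \quad \text{for a.e.\ } \bx \in \Omega,\; i = 1, 2.
\]
Hence $\bff(\hu) \in [L^\infty(\Omega)]^2 \subset [L^2(\Omega)]^2$, using that $\Omega$ is bounded.

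For the second step, I would plug into the weak-solution identity \eqref{eq:weaksol} the restricted class of test functions $\phi \in H^1_0(\Omega)$. Since $H^1_0(\Omega) \subset \Hone{\Gamma_C}{\Omega}$ (vanishing on all of $\Gamma$ in particular vanishes on $\Gamma_C$), and since the trace of such $\phi$ on $\Gamma$ is zero, the boundary duality $\li \bff(g)\cdot\bn, \phi \ri_\Gamma$ vanishes, leaving
\[
-(\bff(\hu), \grad\phi) = (r, \phi), \quad \forall \phi \in H^1_0(\Omega).
\]
Comparing this with the definition \eqref{eq:distdiv} of a square-integrable distributional divergence, and using $r \in L^2(\Omega)$, identifies $\gradd \bff(\hu) = r$ in the $L^2(\Omega)$ sense. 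Combined with Step 1, this yields $\bff(\hu) \in \Hdiv{\Omega}$ with the claimed divergence.

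There is essentially no obstacle here; the only point one has to be careful about is the inclusion $H^1_0(\Omega) \subset \Hone{\Gamma_C}{\Omega}$, which is exactly what lets one drop the boundary term and reduce the weak-solution identity to the definition of a square-integrable weak divergence. The Lipschitz hypothesis is used only to guarantee that $\bff(\hu)$ is itself in $[L^2(\Omega)]^2$, so that the pairing $(\bff(\hu), \grad\phi)$ on the left-hand side makes sense as an $L^2$ inner product.
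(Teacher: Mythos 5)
Your proof is correct and follows essentially the same route as the paper's: the paper likewise deduces $\bff(\hu)\in[L^\infty(\Omega)]^2\subset[L^2(\Omega)]^2$ from \eqref{eq:lipschitz} and then identifies $\gradd\bff(\hu)=r$ by comparing \eqref{eq:weaksol} with \eqref{eq:distdiv}. You have merely made explicit the two details the paper leaves implicit, namely the inclusion $H^1_0(\Omega)\subset\Hone{\Gamma_C}{\Omega}$ and the vanishing of the boundary pairing for such test functions.
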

\begin{proof}
It is easy to see, using \eqref{eq:lipschitz}, that $\bff(v) \in [L^\infty(\Omega)]^2 \subset [L^2(\Omega)]^2$, for every $v  \in L^\infty(\Omega)$. Furthermore, in view of \eqref{eq:weaksol} and \eqref{eq:distdiv}, it holds that $\gradd \bff(\hu) \in L^2(\Omega)$ and $\gradd \bff(\hu) = r \in L^2(\Omega)$, for any weak solution, $\hu \in L^\infty(\Omega)$, to \eqref{eq:balance}.
\end{proof}

In other words, the PDE \eqref{eq:balancePDE} is satisfied by $\hu$ in an $L^2$ sense. Also, in view of \cref{lem:hdiv}, the weak formulation \eqref{eq:weaksol} can be seen, in a sense, as the result of applying the Green's formula \eqref{eq:greendiv} to \eqref{eq:balance}.

Note that \cref{lem:hdiv} holds under quite general assumptions and $\hu$ does not need to be piecewise $\cC^1$. In particular, in view of \cite[Lemma 2.4]{2005HdivHyperbolic}, for any piecewise $\cC^1$ function $v$, using \eqref{eq:lipschitz}, $\bff(v) \in \Hdiv{\Omega}$ is equivalent to the \emph{Rankine\-/Hugoniot jump condition}: $\ldb \bff(v)\cdot\bn \rdb_\sC = 0$ a.e. on $\sC$, for every curve $\sC \subset \Omega$; cf., \cite{2005HdivHyperbolic, GodlewskiHCL}. Here, $\ldb \cdot \rdb_\sC$ is the jump across the curve $\sC$ and $\bn$ denotes the unit normal to $\sC$.

\section{Least-squares formulations}
\label{sec:lsapproach}

This section is devoted to the least-squares principles, related to the weak formulation \eqref{eq:weaksol}, that can be used for deriving finite element methods for balance laws of the form \eqref{eq:balance}. First, an $H^{-1}$-based formulation is discussed. Then, the approach based on the Helmholtz decomposition in \cref{thm:helmholtz}, which is a main focus of this paper, is described.

\subsection{\texorpdfstring{$H^{-1}$}{H\^{}\{-1\}}-based formulation}
\label{ssec:Hmone}

Here, we comment on the relation between the definition \eqref{eq:weaksol}, of a weak solution to \eqref{eq:balance}, and the $H^{-1}$-type spaces defined in \cref{sec:basic}. First, \blue{a slightly modified version of the divergence operator, called here ``duality divergence'', is presented.}

\begin{definition}
For any vector field $\bv \in [L^2(\Omega)]^2$, the ``duality divergence'' operator $[\wgradd] \col [L^2(\Omega)]^2 \allowbreak\to \Hmone{\Gamma_C}{\Omega}$ is defined as $\wgradd \bv = \ell_{\bv} \in \Hmone{\Gamma_C}{\Omega}$, where $\ell_{\bv}(\phi) = -(\bv, \grad \phi)$, for all $\phi \in \Hone{\Gamma_C}{\Omega}$.
\end{definition}

\begin{remark}
\blue{The main differences between the ``duality'' and the ``standard'', $[\gradd]\col \Hdiv{\Omega} \to L^2(\Omega)$ (defined via \eqref{eq:distdiv}), divergence operators is that the ``duality'' version acts on the larger space $[L^2(\Omega)]^2$ and it ignores boundary terms that are present when using the ``standard'' divergence (cf., \eqref{eq:greendiv}).} For the case when $\bv \in \Hdiv{\Omega}$, owing to \eqref{eq:greendiv}, the relation between the ``duality'' and the ``standard'' divergence operators is
\[
[ \wgradd \bv ](\phi) = -(\bv, \grad \phi) = (\gradd \bv, \phi) - \li \bv\cdot\bn, \phi \ri_\Gamma, \quad \forall \phi \in \Hone{\Gamma_C}{\Omega}.
\]
Particularly, if additionally $\bv\cdot\bn = 0$ on $\Gamma_I$, then $\wgradd \bv$ and $\gradd \bv$ can be equated via the standard embedding of $L^2(\Omega)$ into $\Hmone{\Gamma_C}{\Omega}$. Otherwise, $\wgradd \bv$ and $\gradd \bv$ cannot be identified, since $\wgradd \bv$ treats the terms on the inflow boundary, $\Gamma_I$, differently, which is convenient for the discussion that follows. \blue{The operator $\wgradd$ and, accordingly, the space $\Hmone{\Gamma_C}{\Omega}$ allow for more natural considerations below, since they are innately related to the particular Helmholtz decomposition in \cref{thm:helmholtz}, as demonstrated below in \cref{lem:wdivHmone}.}
\end{remark}

Next, consider the linear functional $\ell_d \in \Hmone{\Gamma_C}{\Omega}$, defined as
\begin{equation}\label{eq:defld}
\ell_d(\phi) = (r, \phi) - \li \bff(g)\cdot\bn, \phi \ri_\Gamma, \quad\forall\phi\in \Hone{\Gamma_C}{\Omega},
\end{equation}
where $r$ and $g$ represent the given data in \eqref{eq:balance}. This functional contains all the given data in \eqref{eq:balance} and \eqref{eq:weaksol}, i.e., it contains both the source and inflow boundary data. Then, in view of \eqref{eq:weaksol} and the definition of $\wgradd$, the problem of finding weak solutions to \eqref{eq:balance} is equivalent to the problem of finding solutions, in $L^\infty(\Omega)$, to the equation
\begin{equation}\label{eq:Hmoneweak}
\wgradd \bff(u) = \ell_d,
\end{equation}
where the equality is understood in $\Hmone{\Gamma_C}{\Omega}$ sense (i.e., it is understood as the equality of functionals in $\Hmone{\Gamma_C}{\Omega}$: $[\wgradd \bff(u)](\phi) = \ell_d(\phi)$, for all $\phi \in \Hone{\Gamma_C}{\Omega}$). Equivalently, this can be expressed as $\lV \wgradd \bff(u) - \ell_d \rV_{-1,\Gamma_C} = 0$.

Thus, a natural discrete least-squares formulation is
\begin{equation}\label{eq:Hmoneprinciple}
u^{\smallh} = \argmin_{v^{\smallh} \in \cU^{\smallh}} \lV \wgradd \bff(v^{\smallh}) - \ell_d \rV_{-1,\Gamma_C}^2,
\end{equation}
for a finite element space $\cU^{\smallh} \subset L^\infty(\Omega)$. In the next subsection, a related least-squares principle is considered, based on the Helmholtz decomposition in \cref{thm:helmholtz}, which is more convenient for implementation.

Finally, the following lemma is useful for the discussion below. It, in a sense, already hints at the relationship between the Helmholtz decomposition and the $H^{-1}$-based formulations \eqref{eq:Hmoneweak} and \eqref{eq:Hmoneprinciple}.

\begin{lemma}\label{lem:wdivHmone}
Let $\bv \in [L^2(\Omega)]^2$ have the Helmholtz decomposition (provided by \cref{thm:helmholtz}, with $\Gamma_1 = \Gamma_C$ and $\Gamma_2 = \Gamma_I$) $\bv = \grad q + \gradp \psi$, for the respective unique $q \in \Hone{\Gamma_C}{\Omega}$ and $\psi \in \Hone{\Gamma_I}{\Omega}$. Then it holds that $[\wgradd \bv](\phi) = -(\grad q, \grad \phi)$, for all $\phi \in \Hone{\Gamma_C}{\Omega}$, and $\lV \wgradd \bv \rV_{-1,\Gamma_C} = \lV \grad q \rV$.
\end{lemma}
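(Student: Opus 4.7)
The plan is to unfold the definition of $\wgradd$ on $\bv = \grad q + \gradp \psi$ and exploit the $L^2$-orthogonality built into the Helmholtz decomposition. The first identity reduces essentially to a single line once the orthogonality statement is extracted, and the norm identity then follows from Cauchy--Schwarz plus a direct test-function choice.

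First, I would fix an arbitrary $\phi \in \Hone{\Gamma_C}{\Omega}$ and write, directly from the definition of the duality divergence,
\[
[\wgradd \bv](\phi) = -(\bv, \grad \phi) = -(\grad q, \grad \phi) - (\gradp \psi, \grad \phi).
\]
The goal is to show that the second term vanishes for every such $\phi$. This is exactly the orthogonality relation encoded in \cref{thm:helmholtz}: in the present setting, $\Gamma_1 = \Gamma_C$ and $\Gamma_2 = \Gamma_I$, so $q$ is characterized by \eqref{eq:weakgrad}, namely $(\grad q, \grad \phi) = (\bv, \grad \phi)$ for all $\phi \in \Hone{\Gamma_C}{\Omega}$. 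Substituting $\bv = \grad q + \gradp \psi$ into the right-hand side yields $(\gradp \psi, \grad \phi) = 0$ for every $\phi \in \Hone{\Gamma_C}{\Omega}$, which is precisely what is needed. Consequently,
\[
[\wgradd \bv](\phi) = -(\grad q, \grad \phi), \quad \forall \phi \in \Hone{\Gamma_C}{\Omega}.
\]

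For the norm identity, I would use the characterization $\lV \wgradd \bv \rV_{-1,\Gamma_C} = \sup_{\phi \in \Hone{\Gamma_C}{\Omega}} \lv (\grad q, \grad \phi) \rv / \lv \phi \rv_1$, where $\lv \phi \rv_1 = \lV \grad \phi \rV$. The Cauchy--Schwarz inequality immediately gives the upper bound $\lV \grad q \rV$. To see that this bound is attained, I would plug in the admissible test function $\phi = q \in \Hone{\Gamma_C}{\Omega}$, for which the ratio equals $\lV \grad q \rV^2 / \lV \grad q \rV = \lV \grad q \rV$ (assuming $q \neq 0$; the case $q = 0$ being trivial since then $\bv = \gradp \psi$ and both sides are zero).

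There is no real obstacle here: the main ``work'' is simply recognizing that the defining variational equation for $q$ in \cref{thm:helmholtz} is precisely the statement that $\gradp \psi$ annihilates $\grad \phi$ for every $\phi \in \Hone{\Gamma_C}{\Omega}$, which is the single ingredient that makes $\wgradd$ a natural partner for this Helmholtz decomposition. Once that is observed, both conclusions follow mechanically.
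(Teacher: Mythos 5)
Your proposal is correct and follows essentially the same route as the paper: both use the definition of $\wgradd$ together with the variational characterization \eqref{eq:weakgrad} of $q$ to replace $(\bv,\grad\phi)$ by $(\grad q,\grad\phi)$, and then obtain the norm identity from the supremum characterization (with Cauchy--Schwarz and the test function $\phi=q$). The only cosmetic difference is that you expand $\bv$ and cancel the $(\gradp\psi,\grad\phi)$ term explicitly, whereas the paper substitutes via \eqref{eq:weakgrad} directly; these are the same observation.
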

\begin{proof}
Using the definition of $\wgradd$ and \eqref{eq:weakgrad}, for any $\phi \in \Hone{\Gamma_C}{\Omega}$, it holds that
\begin{gather*}
[\wgradd \bv](\phi) = -(\bv, \grad \phi) = -(\grad q, \grad \phi),\\
\lV \wgradd \bv \rV_{-1,\Gamma_C} = \sup_{\phi \in \Hone{\Gamma_C}{\Omega}}\frac{\lv [\wgradd \bv](\phi) \rv}{\lv \phi \rv_{1}} = \sup_{\phi \in \Hone{\Gamma_C}{\Omega}}\frac{\lv (\grad q, \grad \phi) \rv}{\lv \phi \rv_{1}} = \lV \grad q \rV.
\end{gather*}
\end{proof}

As a consequence of \cref{lem:wdivHmone}, $\lV \wgradd \bv \rV_{-1,\Gamma_C} \le \lV \bv \rV = (\lV \grad q \rV^2 + \lV \gradp \psi \rV^2)^\halff$, for any $\bv \in [L^2(\Omega)]^2$. This shows that $[\wgradd] \col [L^2(\Omega)]^2 \to \Hmone{\Gamma_C}{\Omega}$ is continuous. Also, the null space of $\wgradd$ is
\begin{equation}\label{eq:wdivnull}
\cN(\wgradd) = \set*{\bv \in [L^2(\Omega)]^2 \where \bv = \gradp \nu \text{ for some } \nu \in \Hone{\Gamma_I}{\Omega}}.
\end{equation}

\subsection{Formulation based on the Helmholtz decomposition}
\label{ssec:helmholtz}

In this subsection, the formulation based on the Helmholtz decomposition described in \cref{thm:helmholtz}, which is a main focus of this paper, is presented. Here, \cref{thm:helmholtz} is applied, using the notation $\Gamma_1 = \Gamma_C$ and $\Gamma_2 = \Gamma_I$.

Let $\hu \in L^\infty(\Omega)$ be a weak solution of interest to \eqref{eq:balance}. In view of \cref{lem:hdiv}, $\bff(\hu) \in \Hdiv{\Omega}\subset [L^2(\Omega)]^2$. Thus, consider the Helmholtz decomposition
\begin{equation}\label{eq:fuhelmholtz}
\bff(\hu) = \grad q + \gradp \psi,
\end{equation}
for the respective, uniquely determined (by $\bff(\hu)$), $q \in \Hone{\Gamma_C}{\Omega}$ and $\psi \in \Hone{\Gamma_I}{\Omega}$. Owing to \eqref{eq:weakgrad}, the definition of $\wgradd$, and \eqref{eq:Hmoneweak}, it holds that
\begin{equation}\label{eq:weakq}
(\grad q, \grad\phi) = (\bff(\hu), \grad \phi) = -[\wgradd\bff(\hu)](\phi) = -\ell_d(\phi), \quad \forall \phi \in \Hone{\Gamma_C}{\Omega},
\end{equation}
where $\ell_d$ is defined in \eqref{eq:defld}. Moreover, in view of \cref{rem:stronggrad}, \cref{lem:hdiv}, and \eqref{eq:balance}, \eqref{eq:weakq} is interpreted as the weak formulation of the following elliptic PDE for $q$:
\begin{equation}\label{eq:ellipticforq}
\lap q = r \text{ in } \Omega,\quad\quad
q = 0 \text{ on } \Gamma_C,\quad\quad
\pardf{q}{\bn} = \bff(g)\cdot\bn \text{ on } \Gamma_I.
\end{equation}

\begin{remark}\label{rem:multisol}
It is known (cf., \cite{LeVequeHCL,LeVequeHyperbolic}) that nonlinear PDEs of the type \eqref{eq:balance} can have multiple weak solutions. Recall that $\ell_d$ contains all the given data in \eqref{eq:balance}. Thus, by \eqref{eq:weakq}, $q$ in \eqref{eq:fuhelmholtz} is uniquely determined by the given data as the solution to the \emph{elliptic} PDE \eqref{eq:ellipticforq}. In contrast, $\psi$ in \eqref{eq:fuhelmholtz} is uniquely determined once the weak solution, $\hu$, is fixed and it satisfies \eqref{eq:weakgradp} with $\bv=\bff(\hu)$, but it is not directly determined by the data in \eqref{eq:balance}. Hence, if $\tu\in L^\infty(\Omega)$ is another weak solution to \eqref{eq:balance}, then $\bff(\tu) = \grad q + \gradp \tpsi$, for some $\tpsi \in \Hone{\Gamma_I}{\Omega}$, whereas $q$ is unchanged. In view of \eqref{eq:Hmoneweak} and \eqref{eq:wdivnull}, this is to be expected, since $[\bff(\hu) - \bff(\tu)] \in \cN(\wgradd)$. In theory, another source of non-uniqueness is when $\bff(\hu) = \bff(\tu)$, for $\hu \ne \tu$. However, this cannot happen in practical problems, e.g., in a time-dependent problem the first component of $\bff$ is the identity function, $f_1 \equiv \iota$; see \cref{rem:iota}. Thus, the only practically possible source of non-uniqueness of the weak solution to \eqref{eq:balance} is the potential non-uniqueness of the $\Hone{\Gamma_I}{\Omega}$ component of the decomposition in \eqref{eq:fuhelmholtz}, since it is determined only implicitly by the given data; that is, once the component $q$ in \eqref{eq:fuhelmholtz} is fixed by the data, through the weak problem \eqref{eq:weakq}, any $\psi \in \Hone{\Gamma_I}{\Omega}$ can be selected, as long as the equality \eqref{eq:fuhelmholtz} would hold for some $\hu \in L^\infty(\Omega)$, which is the only constraint on the $\Hone{\Gamma_I}{\Omega}$ component in \eqref{eq:fuhelmholtz} that the PDE \eqref{eq:balance} and the weak formulation \eqref{eq:weaksol} (or its equivalent \eqref{eq:Hmoneweak}) provide.
\end{remark}

\blue{Now, let $q \in \Hone{\Gamma_C}{\Omega}$ be the exact (but as yet not explicitly known) unique solution to \eqref{eq:weakq}. Note that here $q$ is viewed as given, since it can be uniquely characterized and obtained from the given data in \eqref{eq:balance} via the elliptic weak problem \eqref{eq:weakq}. Based on the decomposition \eqref{eq:fuhelmholtz}, considering the solution to \eqref{eq:balance} together with the components of the Helmholtz decomposition \eqref{eq:fuhelmholtz} as unknown functions}, \eqref{eq:balance} is reformulated as the following first-order system of PDEs, for the unknowns $v$, $p$, and $\mu$:
\begin{equation}\label{eq:fosbalance}
\begin{alignedat}{4}
\bff(v) - \grad p - \gradp \mu &= \bzero &&\text{ in } \Omega,\quad\quad\quad p &&= 0 &&\text{ on } \Gamma_C,\\
\grad p &= \grad q &&\text{ in } \Omega,\quad\quad\quad \mu &&= 0 &&\text{ on } \Gamma_I.\\
\end{alignedat}
\end{equation}
\blue{Namely, \eqref{eq:fosbalance} is a system of PDEs for the unknown function (that solves \eqref{eq:balance}) and the Helmholtz decomposition of the flux vector. Observe that $q$ encodes all the given data (the source term and the boundary condition) in \eqref{eq:balance}, which is imposed via the second equation in \eqref{eq:fosbalance}. Let $\hv \in L^\infty(\Omega)$, $\hp\in\Hone{\Gamma_C}{\Omega}$, and $\hmu\in\Hone{\Gamma_I}{\Omega}$ solve \eqref{eq:fosbalance} exactly. Then, $\grad \hp = \grad q$ and $\bff(\hv) = \grad q + \gradp \hmu$ are satisfied. Consequently, owing to \cref{lem:wdivHmone} and \eqref{eq:weakq}, it holds that $\wgradd\bff(\hv) = \ell_d$. Hence, $\hv$ solves \eqref{eq:Hmoneweak} and it is, thus, a weak solution to \eqref{eq:balance}. That is, in view of \cref{lem:wdivHmone}, the divergence equation in \eqref{eq:balance} is addressed via the particular Helmholtz decomposition, which is represented by the first equation in \eqref{eq:fosbalance}.}

\blue{By taking the squared $L^2(\Omega)$ norms of the residuals in \eqref{eq:fosbalance},} the least-squares functional, derived from \eqref{eq:fosbalance}, is
\begin{equation}\label{eq:cF}
\cF(v,p,\mu\where q) = \lV \bff(v) - \grad p - \gradp \mu \rV^2 + \lV \grad p - \grad q \rV^2,
\end{equation}
for $v\in L^\infty(\Omega)$, $p\in \Hone{\Gamma_C}{\Omega}$, and $\mu \in \Hone{\Gamma_I}{\Omega}$. This results in a least-squares principle for the minimization of $\cF$. Simply setting $p = q$ and removing the second term in \eqref{eq:cF} is not a practical option, since $q$ is only given implicitly as the solution to \eqref{eq:weakq} and it cannot be exactly represented in practice. One approach to addressing such a minimization is by reformulating it as a two-stage process, where the first stage obtains an approximation to $\grad q$ by solving \eqref{eq:ellipticforq} and then, using this approximation, the minimization of $\cF$ is addressed in the second stage. Alternatively, owing to \eqref{eq:weakq}, we consider the functional, for $v\in L^\infty(\Omega)$, $p\in \Hone{\Gamma_C}{\Omega}$, and $\mu \in \Hone{\Gamma_I}{\Omega}$,
\begin{equation}\label{eq:hcF}
\begin{split}
\hcF(v,p,\mu\where r, g) &= \lV \bff(v) - \grad p - \gradp \mu \rV^2 + \lV \grad p \rV^2 + 2 \ell_d(p)\\
&= \lV \bff(v) - \grad p - \gradp \mu \rV^2 + \lV \grad p \rV^2 + 2 \lb (r, p) - \li \bff(g)\cdot\bn, p \ri_\Gamma \rb,
\end{split}
\end{equation}
which utilizes only available data and is more convenient for a direct implementation. \blue{The derivation of $\hcF$ is based on \eqref{eq:weakq} and the basic equality $\lV \grad p - \grad q \rV^2 = \lV \grad p \rV^2 - 2(\grad q, \grad p) + \lV \grad q \rV^2$, noticing that dropping $\lV \grad q \rV^2$ does not affect the minimization since it acts as an additive constant to the functional.} The minimization of $\cF$ is equivalent (in terms of minimizers) to the minimization of $\hcF$. Notice that the minimal value of $\cF$ is $0$, whereas the minimal value of $\hcF$ is $-\lV \grad q \rV^2$. Unlike $\cF$, $\hcF$ can be evaluated for any given $(v,p,\mu) \in L^\infty(\Omega)\times \Hone{\Gamma_C}{\Omega}\times \Hone{\Gamma_I}{\Omega}$ using only available information, without requiring explicit knowledge of $q$.

Consider finite element spaces $\cU^{\smallh} \subset L^\infty(\Omega)$, $\cVh{\Gamma_C} \subset \Hone{\Gamma_C}{\Omega}$, and $\cVh{\Gamma_I} \subset \Hone{\Gamma_I}{\Omega}$. In general, these spaces do not need to be on the same mesh or of the same order. Nevertheless, for simplicity of notation, we use $h$ to denote the mesh parameters on all spaces and it is not difficult to generalize the results and formulations in this paper. The discrete least-squares formulation of interest is
\begin{equation}\label{eq:nonquadmin}
\begin{alignedat}{2}
\text{minimize }& \quad&&\cF(v^{\smallh},p^{\smallh},\mu^{\smallh} \where q) \;\text{ or }\; \hcF(v^{\smallh},p^{\smallh},\mu^{\smallh}\where r, g),\\
\text{for }& &&v^{\smallh} \in \cU^{\smallh},\; p^{\smallh} \in \cVh{\Gamma_C},\; \mu^{\smallh} \in \cVh{\Gamma_I}.
\end{alignedat}
\end{equation}
If $(u^{\smallh}, q^{\smallh}, \psi^{\smallh}) \in \cU^{\smallh} \times \cVh{\Gamma_C} \times \cVh{\Gamma_I}$ is a minimizer of \eqref{eq:nonquadmin}, then $u^{\smallh} \in \cU^{\smallh}$ is the obtained approximation of a weak solution to \eqref{eq:balance}.

A common approach to solving problems like \eqref{eq:nonquadmin}, which is tailored to non-quadratic least-squares problems, is the \emph{Gauss-Newton method} (see \cite{DennisNO}), where the system \eqref{eq:fosbalance} is linearized by the Newton method and the resulting linear first-order system is reformulated to a quadratic least-squares principle. For example, let $\mathring{v}$ be a current iterate. The aim is to obtain a next iterate $v$. To this purpose, the equation is linearized around $\mathring{v}$. In general, for a (Fr{\'e}chet) differentiable nonlinear operator $F(v)$, the Newton linearization of the equation $F(v) = 0$ around $\mathring{v}$ is $F(\mathring{v}) + F'(\mathring{v})v_{\smallDelta} = 0$, where $F'(\mathring{v})v_{\smallDelta}$ is, generally, the G{\^a}teaux (i.e., directional) derivative of $F$ at $\mathring{v}$ in the direction $v_{\smallDelta}$. This is an equation for the \emph{update (step)} $v_{\smallDelta}$, which is reformulated as a least-squares method below, where the new iterate is obtained as $v = \mathring{v} + v_{\smallDelta}$. In particular, the Newton linearization of \eqref{eq:fosbalance}, with unknowns $v_{\smallDelta}$, $p_{\smallDelta}$, and $\mu_{\smallDelta}$, is
\[
\begin{alignedat}{4}
\bff'(\mathring{v})v_{\smallDelta} - \grad p_{\smallDelta} - \gradp \mu_{\smallDelta} &= \grad \mathring{p} + \gradp \mathring{\mu} - \bff(\mathring{v}) &&\text{ in } \Omega,\quad\quad\quad p_{\smallDelta} &&= 0 &&\text{ on } \Gamma_C,\\
\grad p_{\smallDelta} &= \grad q - \grad \mathring{p} &&\text{ in } \Omega,\quad\quad\quad \mu_{\smallDelta} &&= 0 &&\text{ on } \Gamma_I,\\
\end{alignedat}
\]
where $q$ is given implicitly as the solution to \eqref{eq:weakq}. The corresponding quadratic least-squares functional, for $v_{\smallDelta}\in L^\infty(\Omega)$, $p_{\smallDelta}\in \Hone{\Gamma_C}{\Omega}$, and $\mu_{\smallDelta} \in \Hone{\Gamma_I}{\Omega}$, is
\begin{equation}\label{eq:cFquad}
\begin{split}
\cF_l(v_{\smallDelta},p_{\smallDelta},\mu_{\smallDelta} \where \mathring{v}, \mathring{p}, \mathring{\mu} \where q) &= \lV \bff'(\mathring{v})v_{\smallDelta} - \grad p_{\smallDelta} - \gradp \mu_{\smallDelta} - \grad \mathring{p} - \gradp \mathring{\mu} + \bff(\mathring{v}) \rV^2\\
&\phantom{=}{}+ \lV \grad p_{\smallDelta} - \grad q + \grad \mathring{p} \rV^2,
\end{split}
\end{equation}
or, alternatively, we consider the quadratic functional
\begin{equation}\label{eq:hcFquad}
\begin{split}
\hcF_l(v_{\smallDelta},p_{\smallDelta},\mu_{\smallDelta} \where \mathring{v}, \mathring{p}, \mathring{\mu} \where r, g) &= \lV \bff'(\mathring{v})v_{\smallDelta} - \grad p_{\smallDelta} - \gradp \mu_{\smallDelta} - \grad \mathring{p} - \gradp \mathring{\mu} + \bff(\mathring{v}) \rV^2\\
&\phantom{=}{} + \lv \mathring{p} + p_{\smallDelta} \rv_1^2 + 2 \lb (r, \mathring{p} + p_{\smallDelta}) - \li \bff(g)\cdot\bn, \mathring{p} + p_{\smallDelta} \ri_\Gamma \rb.
\end{split}
\end{equation}
Thus, for current iterates $\mathring{u}^{\smallh} \in \cU^{\smallh}$, $\mathring{q}^{\smallh}\in \cVh{\Gamma_C}$, and $\mathring{\psi}^{\smallh} \in \cVh{\Gamma_I}$, the quadratic discrete least-squares problem is
\begin{equation}\label{eq:quadmin}
\begin{alignedat}{2}
\text{minimize }& \quad&&\cF_l(u_{\smallDelta}^{\smallh},q_{\smallDelta}^{\smallh},\psi_{\smallDelta}^{\smallh} \where \mathring{u}^{\smallh}, \mathring{q}^{\smallh}, \mathring{\psi}^{\smallh} \where q) \;\text{ or }\; \hcF_l(u_{\smallDelta}^{\smallh},q_{\smallDelta}^{\smallh},\psi_{\smallDelta}^{\smallh} \where \mathring{u}^{\smallh}, \mathring{q}^{\smallh}, \mathring{\psi}^{\smallh} \where r, g),\\
\text{for }& &&u_{\smallDelta}^{\smallh} \in \cU^{\smallh},\; q_{\smallDelta}^{\smallh} \in \cVh{\Gamma_C},\; \psi_{\smallDelta}^{\smallh} \in \cVh{\Gamma_I}.
\end{alignedat}
\end{equation}

Since it utilizes only given data, the computationally feasible weak formulation\footnote{It can be derived directly, using the functional $\hcF_l$, or using the functional $\cF_l$ together with \eqref{eq:weakq}.} associated with \eqref{eq:quadmin} is: Find $(u_{\smallDelta}^{\smallh},q_{\smallDelta}^{\smallh},\psi_{\smallDelta}^{\smallh}) \in \cU^{\smallh} \times \cVh{\Gamma_C} \times \cVh{\Gamma_I}$ such that
\begin{small}
\[
\left\{
\begin{alignedat}{9}
&\Big( \bff'(\mathring{u}^{\smallh})u_{\smallDelta}^{\smallh} -{} &&\grad q_{\smallDelta}^{\smallh} -{} &&\gradp \psi_{\smallDelta}^{\smallh},\;{} &&\bff'(\mathring{u}^{\smallh})v^{\smallh} \Big) &&{}= -&&\Big( \bff(\mathring{u}^{\smallh}) -{} &&\grad \mathring{q}^{\smallh} -{} &&\gradp \mathring{\psi}^{\smallh},\; &&\bff'(\mathring{u}^{\smallh})v^{\smallh} \Big),\\
-&\Big( \bff'(\mathring{u}^{\smallh})u_{\smallDelta}^{\smallh} -{} 2&&\grad q_{\smallDelta}^{\smallh} -{} &&\gradp \psi_{\smallDelta}^{\smallh},\;{} &&\grad p^{\smallh} \Big) &&{}= &&\Big( \bff(\mathring{u}^{\smallh}) -{} 2 &&\grad \mathring{q}^{\smallh} -{} &&\gradp \mathring{\psi}^{\smallh},\; &&\grad p^{\smallh} \Big) - \ell_d(p^{\smallh}),\\
-&\Big( \bff'(\mathring{u}^{\smallh})u_{\smallDelta}^{\smallh} -{} &&\grad q_{\smallDelta}^{\smallh} -{} &&\gradp \psi_{\smallDelta}^{\smallh},\;{} &&\gradp\mu^{\smallh} \Big) &&{}= &&\Big( \bff(\mathring{u}^{\smallh}) -{} &&\grad \mathring{q}^{\smallh} -{} &&\gradp \mathring{\psi}^{\smallh},\; &&\gradp\mu^{\smallh} \Big),
\end{alignedat}
\right.
\]
\end{small}%
for all $(v^{\smallh},p^{\smallh},\mu^{\smallh}) \in \cU^{\smallh} \times \cVh{\Gamma_C} \times \cVh{\Gamma_I}$. The final approximation is obtained by iteratively repeating the above procedure. In practice, as in \cref{sec:numerical}, a \emph{damped} Gauss-Newton approach is preferred by combining the Gauss-Newton method with a \emph{line search} algorithm, where the ability to evaluate the functional $\hcF$ is utilized; see \cite{DennisNO}.

Similar to \cite{2005HdivHyperbolic}, by viewing, for the moment, $\bff$ as the (nonlinear) map $\bff\col L^\infty(\Omega) \to [L^2(\Omega)]^2$ and using \eqref{eq:lipschitz}, one can show that, for any $\mathring{v} \in L^\infty(\Omega)$, $\bff'(\mathring{v})\col L^\infty(\Omega) \to [L^2(\Omega)]^2$ is a bounded linear map, i.e., $\lV \bff'(\mathring{v}) v \rV \le C_{\bff,\mathring{v}} \lV v \rV_{L^\infty(\Omega)}$, for all $v \in L^\infty(\Omega)$, where the constant $C_{\bff,\mathring{v}} > 0$ can depend on $\bff$ and $\lV \mathring{v} \rV_{L^\infty(\Omega)}$. This demonstrates that $\bff\col L^\infty(\Omega) \to [L^2(\Omega)]^2$ is (Fr{\'e}chet) differentiable on $L^\infty(\Omega)$, which is of basic importance for the applicability of the Gauss-Newton method to the solution of \eqref{eq:nonquadmin}.

The first-order system \eqref{eq:fosbalance} possesses the convenient property that the nonlinearity (i.e., $\bff(v)$) is only in a zeroth-order term (i.e., a term that does not involve differential operators). Furthermore, the utilization of a formulation like \eqref{eq:nonquadmin}, based on the Helmholtz decomposition, is justified by its relation to the weak formulation \eqref{eq:weaksol} and the $H^{-1}$-type least-squares principle \eqref{eq:Hmoneprinciple}, since this relationship provides the desirable numerical conservation properties of \eqref{eq:nonquadmin}; see \cref{sec:analysis}.

The method here is general and can be applied to balance laws of the type \eqref{eq:balance}. Particularly, it can be used for conservation laws, $r \equiv 0$. However, although the approach here is related to the ideas in \cite{2005HdivHyperbolic,OlsonPhDthesis}, developed for conservation laws, it differs from the methods in \cite{2005HdivHyperbolic,OlsonPhDthesis}, when applied to conservation laws. There, the methods are specially tailored to conservation laws utilizing a different Helmholtz decomposition. Namely, for any $\bv \in [L^2(\Omega)]^2$, $\bv = \grad \tq + \gradp \tpsi$, for uniquely determined $\tq \in H^1_0(\Omega)$ and $\tpsi \in H^1(\Omega)/\bbR$. Whence, $\gradd \bv = 0$ if and only if $\bv = \gradp \tpsi$ for some $\tpsi \in H^1(\Omega)$; see \cite{GiraultFEM}. This is applicable to conservation laws, since, in that case, $\bff(\tu)$ is divergence free, for any respective weak solution $\tu$; cf., \cref{lem:hdiv}. A consequence of using the Helmholtz decomposition of \cref{thm:helmholtz} is that $q \in \Hone{\Gamma_C}{\Omega}$ in \eqref{eq:fuhelmholtz} carries all given data (both the source and the inflow data), whereas an equality like $\bff(\tu) = \gradp \tpsi$, specific to conservation laws, only provides $\gradd \bff(\tu) = 0$ and does not contain any information on the inflow boundary condition. Therefore, additional boundary terms are required in \cite{2005HdivHyperbolic,OlsonPhDthesis}. Although a term like $\lV v - g \rV_{\Gamma_I}^2$, where $\lV \cdot \rV_{\Gamma_I}$ denotes the norm in $L^2(\Gamma_I)$, is not well-defined for a general function $v \in L^\infty(\Omega)$, it makes sense for piecewise continuous and, particularly, for finite element functions in place of $v$. 
Thus, in practice, the formulations \eqref{eq:nonquadmin}, \eqref{eq:quadmin} can be augmented with such a boundary term or a scaled version of it, i.e., it can be added to the functionals in \eqref{eq:cF}, \eqref{eq:hcF}, \eqref{eq:cFquad}, and \eqref{eq:hcFquad}. This is utilized in the numerical experiments in \cref{sec:numerical}. Nonetheless, it is not required for the analysis of the formulation, in \cref{sec:analysis}, since, as mentioned, the boundary data is incorporated in the formulation due to the particular Helmholtz decomposition in \cref{thm:helmholtz} and the resulting relation to \eqref{eq:weaksol}.

\section{Analysis}
\label{sec:analysis}

This section is devoted to the analysis of the formulation in \cref{ssec:helmholtz}. We address the important numerical conservation properties of the method. Also, the regularization of the formulation and the $L^2(\Omega)$ convergence of the approximations are discussed. It is convenient to concentrate on the functional $\cF$ in \eqref{eq:cF}. All considerations, with minor changes, carry over to the functional $\hcF$ in \eqref{eq:hcF} since, as discussed in \cref{ssec:helmholtz}, they provide equivalent formulations. The notation in \cref{ssec:helmholtz} is reused here. In particular, $\hu$ denotes a weak solution to \eqref{eq:balance} and the decomposition \eqref{eq:fuhelmholtz} is utilized.

\subsection{Weak solutions and the conservation property}\label{ssec:conservation}

Here, we investigate in more detail the relationship between \eqref{eq:nonquadmin} and the weak formulation \eqref{eq:weaksol}. As a consequence, the numerical conservation property of the method is obtained.

The following approximation bounds are used as assumptions in the results below:
\begin{alignat}{10}
&\exists\, \hu^{\smallh} \in \cU^{\smallh}&&\col \lV \hu^{\smallh} - \hu \rV &&\le C h^{\beta_{\hu}} &&\lV \hu \rV_{\beta_{\hu}},\label{eq:approxu}\\
&\exists\, \hq^{\smallh} \in \cVh{\Gamma_C}&&\col \lV \grad (\hq^{\smallh} - q) \rV &&\le C h^{\beta_q} &&\lV q \rV_{\beta_q+1},\label{eq:approxq}\\
&\exists\, \hpsi^{\smallh} \in \cVh{\Gamma_I}&&\col \lV \grad (\hpsi^{\smallh} - \psi) \rV &&\le C h^{\beta_\psi} &&\lV \psi \rV_{\beta_\psi+1}, \label{eq:approxpsi}
\end{alignat}
where $C>0$ and $\beta_{\hu},\beta_q,\beta_\psi>0$ are constants that do not depend on $h$, the functions $\hu$, $q$, and $\psi$ are defined in \eqref{eq:fuhelmholtz}, and $\lV \cdot \rV_s$, for $s \in \bbR_+$, denotes the norm in the Sobolev space $H^s(\Omega)$. The assumptions \cref{eq:approxu,eq:approxq,eq:approxpsi} are associated with known interpolation bounds of polynomial approximation theory; see \cite{BrennerFEM}. The approximation orders, $\beta_{\hu}$, $\beta_q$, and $\beta_\psi$, depend on the smoothness of $\hu$, $q$, and $\psi$, respectively, and the polynomial orders of the respective finite element spaces, $\cU^{\smallh}$, $\cVh{\Gamma_C}$, and $\cVh{\Gamma_I}$.

As mentioned, the formulations \eqref{eq:nonquadmin} and \eqref{eq:Hmoneprinciple} are related as they both are based on the notion of a weak solution defined by \eqref{eq:weaksol}. In fact, this relationship can be seen more directly. For $v^{\smallh} \in \cU^{\smallh}$, consider the corresponding Helmholtz decomposition $\bff(v^{\smallh}) = \grad q_{v^{\smallh}} + \gradp \psi_{v^{\smallh}}$, where $q_{v^{\smallh}} \in \Hone{\Gamma_C}{\Omega}$ and $\psi_{v^{\smallh}} \in \Hone{\Gamma_I}{\Omega}$. Then, using \cref{lem:wdivHmone}, the exact decomposition \eqref{eq:fuhelmholtz}, and the $H^\mone$ equation \eqref{eq:Hmoneweak}, it follows
\begin{equation}\label{eq:semidiscrrel}
\cG(v^{\smallh}\where q) = \min_{\substack{p\in \Hone{\Gamma_C}{\Omega}\\ \mu\in \Hone{\Gamma_I}{\Omega}}} \cF(v^{\smallh},p,\mu\where q) = \half \lV \wgradd \bff(v^{\smallh}) - \ell_d \rV_{-1,\Gamma_C}^2.
\end{equation}
Indeed,
\begin{align*}
\cG(v^{\smallh}\where q) &= \min_{p\in \Hone{\Gamma_C}{\Omega}}\lb \lV \grad q_{v^{\smallh}} - \grad p \rV^2 + \lV \grad p - \grad q \rV^2 \rb + \min_{\mu\in \Hone{\Gamma_I}{\Omega}} \lV \gradp \psi_{v^{\smallh}} - \gradp \mu \rV^2\\
&= \min_{p\in \Hone{\Gamma_C}{\Omega}}\lb \lV \grad q_{v^{\smallh}} - \grad p \rV^2 + \lV \grad p - \grad q \rV^2 \rb = \half \lV \grad q_{v^{\smallh}} - \grad q \rV^2\\
&= \half \lV \wgradd[\bff(v^{\smallh}) - \bff(\hu)] \rV_{-1,\Gamma_C}^2 = \half \lV \wgradd \bff(v^{\smallh}) - \ell_d \rV_{-1,\Gamma_C}^2.
\end{align*}
Thus, the minimization \eqref{eq:Hmoneprinciple} is equivalent to the minimization
\begin{alignat*}{2}
\text{minimize }& \quad&&\cF(v^{\smallh},p,\mu \where q) \;\text{ or }\; \hcF(v^{\smallh},p,\mu \where r, g),\\
\text{for }& &&v^{\smallh} \in \cU^{\smallh},\; p \in \Hone{\Gamma_C}{\Omega},\; \mu \in \Hone{\Gamma_I}{\Omega}.
\end{alignat*}

However, in a practical, fully discrete formulation like \eqref{eq:nonquadmin}, finite element spaces $\cVh{\Gamma_C} \subset \Hone{\Gamma_C}{\Omega}$ and $\cVh{\Gamma_I} \subset \Hone{\Gamma_I}{\Omega}$ are utilized. Therefore, a more detailed study of the relationship between \eqref{eq:Hmoneprinciple} and \eqref{eq:nonquadmin} is presented below. It provides the relation between \eqref{eq:nonquadmin} and the weak formulation \eqref{eq:weaksol}, which is important for the numerical conservation properties of formulation \eqref{eq:nonquadmin}. To this end, for simplicity of exposition, the following functional is introduced, for $v \in L^\infty(\Omega)$:
\[
\cG^{\smallh}(v\where q) = \min_{\substack{p^{\smallh}\in\scVh{\Gamma_C}\\ \mu^{\smallh}\in\scVh{\Gamma_I}}} \cF(v, p^{\smallh}, \mu^{\smallh}\where q).
\]
Considering, for $v \in L^\infty(\Omega)$, the corresponding Helmholtz decomposition $\bff(v) = \grad q_{v} + \gradp \psi_{v}$, where $q_{v} \in \Hone{\Gamma_C}{\Omega}$ and $\psi_{v} \in \Hone{\Gamma_I}{\Omega}$, it is easy to see that
\begin{equation}\label{eq:cGhalt}
\cG^{\smallh}(v\where q) = \min_{\substack{p^{\smallh}\in\scVh{\Gamma_C}\\ \mu^{\smallh}\in\scVh{\Gamma_I}}} \lb \lV \grad p^{\smallh} - \grad q_v \rV^2 + \lV \gradp \mu^{\smallh} - \gradp \psi_v \rV^2 + \lV \grad p^{\smallh} - \grad q \rV^2 \rb.
\end{equation}
The minimization in \eqref{eq:cGhalt} is a least-squares problem, where $v$ and $q$ are viewed as given. It is trivial to check that the respective formulation is $[\Hone{\Gamma_C}{\Omega}\times \Hone{\Gamma_I}{\Omega}]$-equivalent, implying the existence and uniqueness of a minimizer. Thus, the functional $\cG^{\smallh}$ is well-defined. Also, problem \eqref{eq:nonquadmin} can be equivalently expressed as
\begin{equation}\label{eq:cGhmin}
u^{\smallh} = \argmin_{v^{\smallh}\in\cU^{\smallh}} \cG^{\smallh}(v^{\smallh}\where q).
\end{equation}

The relationship between \eqref{eq:Hmoneprinciple} and \eqref{eq:nonquadmin} (or \eqref{eq:cGhmin}) as well as other properties of the formulation \eqref{eq:nonquadmin} (or \eqref{eq:cGhmin}) are now shown in the following theorem; see \cite{OlsonPhDthesis,2005HdivHyperbolic} for a related discussion on conservation laws.

\begin{theorem}\label{thm:relation}\leavevmode
\begin{enumerate}[label=(\roman*),nolistsep,wide,align=right]
\item\label{thm:Hmonecontrol} For any $v^{\smallh} \in \cU^{\smallh}$, it holds $\frac{1}{2} \lV \wgradd \bff(v^{\smallh}) - \ell_d \rV_{-1,\Gamma_C}^2 \le \cG^{\smallh}(v^{\smallh}\where q)$.
\item\label{anotherlabel} Assume the approximation bounds \eqref{eq:approxq} and \eqref{eq:approxpsi}. For $v^{\smallh} \in \cU^{\smallh}$, consider the corresponding Helmholtz decomposition $\bff(v^{\smallh}) = \grad q_{v^{\smallh}} + \gradp \psi_{v^{\smallh}}$. Then, using the notation in \eqref{eq:fuhelmholtz} and \eqref{eq:defld}, the following estimates hold, for some constant $C>0$:
\begin{align}
\cG^{\smallh}(v^{\smallh}\where q) &\le 2\lV \wgradd \bff(v^{\smallh}) - \ell_d \rV_{-1,\Gamma_C}^2 + C h^{2\beta_q}\lV q \rV_{\beta_q+1}^2 + \textstyle{\min_{\mu^{\smallh} \in \scVh{\Gamma_I}}} \lV \grad (\mu^{\smallh} - \psi_{v^{\smallh}}) \rV^2,\nn\\
\cG^{\smallh}(v^{\smallh}\where q) &\le 2\lV \wgradd \bff(v^{\smallh}) - \ell_d \rV_{-1,\Gamma_C}^2 + C h^{2\beta_q}\lV q \rV_{\beta_q+1}^2 + C h^{2\beta_{\psi_{h}}}\lV \psi_{v^{\smallh}} \rV_{\beta_{\psi_{h}}+1}^2,\nn\\
&\cG^{\smallh}(v^{\smallh}\where q) \le 2\lV \bff(v^{\smallh}) - \bff(\hu) \rV^2 + C h^{2\beta_q}\lV q \rV_{\beta_q+1}^2 + C h^{2\beta_\psi}\lV \psi \rV_{\beta_\psi+1}^2,\label{eq:almostL2}
\end{align}
where we consider an approximation bound similar to \eqref{eq:approxpsi}, but for the function $\psi_{v^{\smallh}}$, providing an estimate with respective approximation order $\beta_{\psi_{h}} > 0$.
\end{enumerate}
\end{theorem}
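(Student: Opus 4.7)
The plan is to prove the two parts separately, with part (i) following almost immediately from the earlier semidiscrete identity \eqref{eq:semidiscrrel} and part (ii) following from clever choices of test functions in the alternative form \eqref{eq:cGhalt}.

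For part (i), since $\cVh{\Gamma_C} \subset \Hone{\Gamma_C}{\Omega}$ and $\cVh{\Gamma_I} \subset \Hone{\Gamma_I}{\Omega}$, the infimum in the definition of $\cG^{\smallh}(v^{\smallh}\where q)$ is taken over a smaller set than in $\cG(v^{\smallh}\where q)$. Therefore $\cG(v^{\smallh}\where q) \le \cG^{\smallh}(v^{\smallh}\where q)$, and the conclusion follows directly from \eqref{eq:semidiscrrel}.

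For part (ii), I would start from the alternative expression \eqref{eq:cGhalt}. The strategy is: plug in $p^{\smallh} = \hq^{\smallh}$ (from \eqref{eq:approxq}) and handle the three terms. Using the triangle (or Young with factor $2$) inequality, write $\lV \grad \hq^{\smallh} - \grad q_{v^{\smallh}} \rV^2 \le 2\lV \grad(\hq^{\smallh} - q) \rV^2 + 2\lV \grad(q - q_{v^{\smallh}}) \rV^2$. The first term is controlled by $C h^{2\beta_q}\lV q \rV_{\beta_q+1}^2$ via \eqref{eq:approxq}, and together with the $\lV \grad \hq^{\smallh} - \grad q \rV^2$ contribution yields a total of $3\lV \grad(\hq^{\smallh} - q) \rV^2$ bounded the same way. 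The crucial observation is that $\bff(v^{\smallh}) - \bff(\hu) = \grad(q_{v^{\smallh}} - q) + \gradp(\psi_{v^{\smallh}} - \psi)$ is an $L^2$\nobreakdash-orthogonal Helmholtz decomposition (since $\bff(\hu)$ has decomposition $\grad q + \gradp \psi$), so by \cref{lem:wdivHmone}, $\lV \grad(q - q_{v^{\smallh}}) \rV = \lV \wgradd[\bff(\hu) - \bff(v^{\smallh})] \rV_{-1,\Gamma_C} = \lV \wgradd \bff(v^{\smallh}) - \ell_d \rV_{-1,\Gamma_C}$. This produces the $2\lV \wgradd \bff(v^{\smallh}) - \ell_d \rV_{-1,\Gamma_C}^2$ term in the first two bounds.

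The three estimates then differ only in how the remaining term $\lV \gradp \mu^{\smallh} - \gradp \psi_{v^{\smallh}} \rV^2 = \lV \grad(\mu^{\smallh} - \psi_{v^{\smallh}}) \rV^2$ is treated. The first estimate leaves it as an infimum over $\mu^{\smallh} \in \cVh{\Gamma_I}$. The second estimate invokes the assumed approximation property for $\psi_{v^{\smallh}}$ to control this infimum by $Ch^{2\beta_{\psi_h}}\lV \psi_{v^{\smallh}} \rV_{\beta_{\psi_h}+1}^2$. For the third estimate, choose $\mu^{\smallh} = \hpsi^{\smallh}$ from \eqref{eq:approxpsi} and split again with a Young inequality: $\lV \grad(\hpsi^{\smallh} - \psi_{v^{\smallh}}) \rV^2 \le 2\lV \grad(\hpsi^{\smallh} - \psi) \rV^2 + 2\lV \grad(\psi - \psi_{v^{\smallh}}) \rV^2$. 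The first summand is handled by \eqref{eq:approxpsi}. For the second, use the Pythagorean identity from the orthogonal Helmholtz decomposition, namely $\lV \grad(q - q_{v^{\smallh}}) \rV^2 + \lV \grad(\psi - \psi_{v^{\smallh}}) \rV^2 = \lV \bff(\hu) - \bff(v^{\smallh}) \rV^2$, to bound both $\lV \grad(\psi - \psi_{v^{\smallh}}) \rV^2$ and (replacing the previous use of $\lV \wgradd \bff(v^{\smallh}) - \ell_d \rV_{-1,\Gamma_C}^2$) $\lV \grad(q - q_{v^{\smallh}}) \rV^2$ by $\lV \bff(v^{\smallh}) - \bff(\hu) \rV^2$. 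Collecting constants (the precise leading factor $2$ is obtained by choosing the Young inequality weight carefully) yields \eqref{eq:almostL2}. The main obstacle is keeping bookkeeping straight so that every appearance of $\lV \wgradd \bff(v^{\smallh}) - \ell_d \rV_{-1,\Gamma_C}^2$ or $\lV \bff(\hu) - \bff(v^{\smallh}) \rV^2$ is absorbed with coefficient exactly $2$; the identification of $H^{-1}$ residual with $L^2$ distance of Helmholtz components via \cref{lem:wdivHmone} is what makes everything align.
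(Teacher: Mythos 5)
Your proposal is correct and follows essentially the same route as the paper: part (i) is the identical subspace-monotonicity argument via \eqref{eq:semidiscrrel}, and for part (ii) you use the same test-function choices, the same $2$-weighted splittings, and the same identification $\lV \grad(q - q_{v^{\smallh}})\rV = \lV \wgradd \bff(v^{\smallh}) - \ell_d \rV_{-1,\Gamma_C}$ from \cref{lem:wdivHmone}. The only (immaterial) difference is in \eqref{eq:almostL2}, where you invoke the Pythagorean identity for the orthogonal Helmholtz components of $\bff(\hu)-\bff(v^{\smallh})$ inside \eqref{eq:cGhalt}, whereas the paper expands $\bff(v^{\smallh}) - \grad \hq^{\smallh} - \gradp\hpsi^{\smallh}$ around $\bff(\hu)$ directly; both rest on the same orthogonality and give the same constant.
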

\begin{remark}\label{rem:betterrate}
Note that the bounds \cref{eq:approxu,eq:approxq,eq:approxpsi} are for some a priori fixed weak solution, $\hu$, and they are invariant with respect to the arguments of the functionals above. In contrast, a similar bound for $\psi_{v^{\smallh}}$, with order $\beta_{\psi_{h}}$, depends on the argument, $v^{\smallh}$, of the functional.
\end{remark}
\begin{proof}[Proof of \cref{thm:relation}]
\ref{thm:Hmonecontrol} By \eqref{eq:semidiscrrel} and the definition of $\cG^{\smallh}$,
\[
\frac{1}{2} \lV \wgradd \bff(v^{\smallh}) - \ell_d \rV_{-1,\Gamma_C}^2 = \min_{\substack{p\in \Hone{\Gamma_C}{\Omega}\\ \mu\in \Hone{\Gamma_I}{\Omega}}} \cF(v^{\smallh},p,\mu\where q) \le \cG^{\smallh}(v^{\smallh}\where q).
\]

\ref{anotherlabel} Recall that $\hu$ is a weak solution to \eqref{eq:balance}, i.e., it solves the equation \eqref{eq:Hmoneweak}. By \eqref{eq:cGhalt}, \cref{lem:wdivHmone}, \eqref{eq:fuhelmholtz}, \eqref{eq:Hmoneweak}, \eqref{eq:approxq}, and the obvious $\lV \gradp \psi \rV = \lV \grad \psi \rV$, it follows that
\begin{align*}
\cG^{\smallh}(v^{\smallh}\where q) &= \min_{\substack{p^{\smallh}\in\scVh{\Gamma_C}\\ \mu^{\smallh}\in\scVh{\Gamma_I}}} [ \lV \grad (p^{\smallh} - q) + \grad (q - q_{v^{\smallh}}) \rV^2 + \lV \gradp (\mu^{\smallh} - \psi_{v^{\smallh}}) \rV^2 + \lV \grad (p^{\smallh} - q) \rV^2 ]\\
&\le 2\lV \grad (q_{v^{\smallh}} - q) \rV^2 + \min_{\mu^{\smallh}\in\scVh{\Gamma_I}} \lV \gradp (\mu^{\smallh} - \psi_{v^{\smallh}}) \rV^2 + 3\min_{p^{\smallh}\in\scVh{\Gamma_C}}\lV \grad (p^{\smallh} - q) \rV^2\\
&\le 2\lV \wgradd \bff(v^{\smallh}) - \ell_d \rV_{-1,\Gamma_C}^2 + \min_{\mu^{\smallh}\in\scVh{\Gamma_I}} \lV \grad (\mu^{\smallh} - \psi_{v^{\smallh}}) \rV^2 + C h^{2\beta_q}\lV q \rV_{\beta_q+1}^2\\
&\le 2\lV \wgradd \bff(v^{\smallh}) - \ell_d \rV_{-1,\Gamma_C}^2 + C h^{2\beta_q}\lV q \rV_{\beta_q+1}^2 + C h^{2\beta_{\psi_{h}}}\lV \psi_{v^{\smallh}} \rV_{\beta_{\psi_{h}}+1}^2.
\end{align*}
Finally, owing to the definition of $\cG^{\smallh}$, \eqref{eq:approxq}, \eqref{eq:fuhelmholtz}, and \eqref{eq:approxpsi}, it holds that
\begin{align*}
\cG^{\smallh}(v^{\smallh}\where q) &= \min_{\substack{p^{\smallh} \in \scVh{\Gamma_C}\\ \mu^{\smallh} \in \scVh{\Gamma_I}}} \lb \lV \bff(v^{\smallh}) - \grad p^{\smallh} - \gradp \mu^{\smallh} \rV^2 + \lV \grad (p^{\smallh} - q) \rV^2 \rb\\
&\le \lV \bff(v^{\smallh}) - \grad \hq^{\smallh} - \gradp \hpsi^{\smallh} \rV^2 + \lV \grad (\hq^{\smallh} - q) \rV^2\\
&\le \lV \bff(v^{\smallh}) - \bff(\hu) + \grad (q - \hq^{\smallh}) + \gradp (\psi - \hpsi^{\smallh}) \rV^2 + C h^{2\beta_q}\lV q \rV_{\beta_q+1}^2\\
&\le 2\lV \bff(v^{\smallh}) - \bff(\hu) \rV^2 + C h^{2\beta_q}\lV q \rV_{\beta_q+1}^2 + C h^{2\beta_\psi}\lV \psi \rV_{\beta_\psi+1}^2,
\end{align*}
where $\hq^{\smallh} \in \cVh{\Gamma_C}$ and $\hpsi^{\smallh} \in \cVh{\Gamma_I}$ satisfy the bounds \eqref{eq:approxq} and \eqref{eq:approxpsi}, respectively.
\end{proof}

The ``control'' of the functional by the $L^2(\Omega)$ norm and the functional convergence are shown next in the following corollaries.

\begin{corollary}\label{cor:L2}
Assume the approximation bounds \eqref{eq:approxq} and \eqref{eq:approxpsi}. Consider a subset $\cQ^{\smallh} \subset \cU^{\smallh}$ that is bounded in $L^\infty(\Omega)$, i.e., there is a constant $B > 0$ such that $\lV \hu \rV_{L^\infty(\Omega)} \le B$ and $\lV v^{\smallh} \rV_{L^\infty(\Omega)} \le B$, for all $v^{\smallh} \in \cQ^{\smallh}$. Then, for some constants $C>0$ and $C_{\bff,B}>0$, where $C_{\bff,B}$ generally depends on $\bff$ and $B$, it holds that
\[
\cG^{\smallh}(v^{\smallh}\where q) \le C_{\bff,B} \lV v^{\smallh} - \hu \rV^2 + C h^{2\beta_q}\lV q \rV_{\beta_q+1}^2 + C h^{2\beta_\psi}\lV \psi \rV_{\beta_\psi+1}^2.
\]
\end{corollary}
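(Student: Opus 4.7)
The proof plan is direct and should follow immediately from the third inequality of \cref{thm:relation}\ref{anotherlabel}, namely the bound \eqref{eq:almostL2}, combined with the local Lipschitz hypothesis \eqref{eq:lipschitz} on the flux vector $\bff$.

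First, I would apply \eqref{eq:almostL2} directly to any $v^{\smallh} \in \cQ^{\smallh}$, which yields
\[
\cG^{\smallh}(v^{\smallh}\where q) \le 2\lV \bff(v^{\smallh}) - \bff(\hu) \rV^2 + C h^{2\beta_q}\lV q \rV_{\beta_q+1}^2 + C h^{2\beta_\psi}\lV \psi \rV_{\beta_\psi+1}^2,
\]
so the remaining task is to control the first term on the right by $\lV v^{\smallh} - \hu \rV^2$ up to a constant depending only on $\bff$ and $B$.

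Next, I would use the uniform $L^\infty$ bound on $\cQ^{\smallh}$ and on $\hu$ to choose the compact subset $J = [-B,B] \subset \bbR$. Then, by \eqref{eq:lipschitz}, for $i=1,2$ and a.e.\ $\bx \in \Omega$,
\[
\lv f_i(v^{\smallh}(\bx)) - f_i(\hu(\bx)) \rv \le K_{\bff,J} \lv v^{\smallh}(\bx) - \hu(\bx) \rv,
\]
since $v^{\smallh}(\bx), \hu(\bx) \in J$ almost everywhere. Squaring, summing over $i=1,2$, and integrating over $\Omega$ gives
\[
\lV \bff(v^{\smallh}) - \bff(\hu) \rV^2 \le 2 K_{\bff,J}^2 \lV v^{\smallh} - \hu \rV^2.
\]

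Setting $C_{\bff,B} = 4 K_{\bff,J}^2$ and combining these two estimates yields the desired bound. I do not anticipate any real obstacle: the result is essentially a corollary bookkeeping step in which the $\bff$-dependence is absorbed into a single constant using the local Lipschitz property made available by the $L^\infty$ boundedness of $\cQ^{\smallh}$ and $\hu$.
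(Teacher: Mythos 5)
Your proposal is correct and matches the paper's proof essentially verbatim: the paper also applies \eqref{eq:almostL2} and then bounds $\lV \bff(v^{\smallh}) - \bff(\hu) \rV^2$ via \eqref{eq:lipschitz} on $J=[-B,B]$, arriving at the same constant $C_{\bff,B}=4K_{\bff,J}^2$. No gaps.
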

\begin{proof}
Consider the compact interval $J = [-B, B]$ in \eqref{eq:lipschitz}. By \eqref{eq:almostL2} and \eqref{eq:lipschitz},
\begin{align*}
\cG^{\smallh}(v^{\smallh}\where q) &\le 2\lV \bff(v^{\smallh}) - \bff(\hu) \rV^2 + C h^{2\beta_q}\lV q \rV_{\beta_q+1}^2 + C h^{2\beta_\psi}\lV \psi \rV_{\beta_\psi+1}^2\\
&\le 4K_{\bff,J}^2\lV v^{\smallh} - \hu \rV^2 + C h^{2\beta_q}\lV q \rV_{\beta_q+1}^2 + C h^{2\beta_\psi}\lV \psi \rV_{\beta_\psi+1}^2,
\end{align*}
where $K_{\bff,J}$ is defined in \eqref{eq:lipschitz}.
\end{proof}

\begin{corollary}\label{cor:zerominconv}
Assume the approximation bounds \cref{eq:approxu,eq:approxq,eq:approxpsi}, that \eqref{eq:cGhmin} has a minimizer $u^{\smallh} \in \cU^{\smallh}$, and that $\hu^{\smallh} \in \cU^{\smallh}$, which satisfies \eqref{eq:approxu}, can be selected such that it forms a bounded sequence in $L^\infty(\Omega)$ as $h\to 0$, i.e., there is a constant $B > 0$ such that $\lV \hu \rV_{L^\infty(\Omega)} \le B$ and $\lV \hu^{\smallh} \rV_{L^\infty(\Omega)} \le B$ as $h \to 0$. Then, for some constants $C>0$ and $C_{\bff,B}>0$, where $C_{\bff,B}$ generally depends on $\bff$ and $B$, it holds that
\[
\cG^{\smallh}(u^{\smallh} \where q) \le C_{\bff,B} h^{2\beta_{\hu}}\lV \hu \rV_{\beta_{\hu}}^2 + C h^{2\beta_q}\lV q \rV_{\beta_q+1}^2 + C h^{2\beta_\psi}\lV \psi \rV_{\beta_\psi+1}^2.
\]
In particular, this implies that $\cG^{\smallh}(u^{\smallh} \where q) \to 0$ as $h \to 0$.
\end{corollary}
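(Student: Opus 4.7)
The plan is to combine three ingredients already on the table: the minimization property defining $u^{\smallh}$, the $L^2$-control estimate from \cref{cor:L2}, and the approximation bound \eqref{eq:approxu}. Specifically, I would first note that since $u^{\smallh} \in \cU^{\smallh}$ is a minimizer of $\cG^{\smallh}(\cdot\where q)$ over $\cU^{\smallh}$, for any test element $\hu^{\smallh} \in \cU^{\smallh}$ we have $\cG^{\smallh}(u^{\smallh}\where q) \le \cG^{\smallh}(\hu^{\smallh}\where q)$. The natural choice of test element is the one furnished by \eqref{eq:approxu}, namely an $\hu^{\smallh} \in \cU^{\smallh}$ satisfying $\lV \hu^{\smallh} - \hu \rV \le C h^{\beta_{\hu}} \lV \hu \rV_{\beta_{\hu}}$.

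Next, I would apply \cref{cor:L2} to $\hu^{\smallh}$. To do so, I need $\hu^{\smallh}$ to lie in a subset of $\cU^{\smallh}$ that is uniformly bounded in $L^\infty(\Omega)$ together with $\hu$; this is exactly what the hypothesis on the boundedness of $\hu^{\smallh}$ in $L^\infty(\Omega)$ as $h \to 0$ provides, so \cref{cor:L2} applies (with $\cQ^{\smallh} = \{\hu^{\smallh}\}$, or the collection of such approximants, and the common bound $B$). This yields
\[
\cG^{\smallh}(\hu^{\smallh}\where q) \le C_{\bff,B} \lV \hu^{\smallh} - \hu \rV^2 + C h^{2\beta_q}\lV q \rV_{\beta_q+1}^2 + C h^{2\beta_\psi}\lV \psi \rV_{\beta_\psi+1}^2.
\]
Substituting the approximation bound \eqref{eq:approxu} for $\lV \hu^{\smallh} - \hu \rV^2$ and chaining with the minimization inequality gives the desired estimate, and letting $h \to 0$ drives each term to zero since $\beta_{\hu}, \beta_q, \beta_\psi > 0$ and $\hu, q, \psi$ are fixed (independent of $h$).

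There is no real obstacle here: the corollary is essentially a consequence of quasi-optimality in disguise. The only subtle point worth making explicit is that the constant $C_{\bff,B}$ in \cref{cor:L2} is independent of $h$ precisely because the Lipschitz constant $K_{\bff,J}$ in \eqref{eq:lipschitz} is taken over the fixed compact interval $J = [-B,B]$ determined by the uniform $L^\infty$ bound on $\{\hu^{\smallh}\}_{h}$ and $\hu$; this is why the hypothesis that $\hu^{\smallh}$ can be chosen bounded in $L^\infty(\Omega)$ as $h \to 0$ is needed (rather than just the single-scale bound used in \cref{cor:L2}). The final statement $\cG^{\smallh}(u^{\smallh}\where q) \to 0$ then follows immediately.
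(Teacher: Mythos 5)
Your proposal is correct and follows exactly the paper's own argument: chain the minimization property $\cG^{\smallh}(u^{\smallh}\where q) \le \cG^{\smallh}(\hu^{\smallh}\where q)$ with the bound of \cref{cor:L2} applied to $\hu^{\smallh}$ (valid by the uniform $L^\infty$ bound $B$), and then insert the approximation estimate \eqref{eq:approxu}. Your remark about the $h$-independence of $C_{\bff,B}$ via the fixed compact interval $J=[-B,B]$ is also the correct reading of why the boundedness hypothesis is imposed.
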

\begin{proof}
Similar to \cref{cor:L2}, using \eqref{eq:cGhmin}, \eqref{eq:almostL2}, \eqref{eq:lipschitz}, and \eqref{eq:approxu}, it holds that
\begin{align*}
\cG^{\smallh}(u^{\smallh} \where q) &\le \cG^{\smallh}(\hu^{\smallh} \where q) \le C_{\bff,B} \lV \hu^{\smallh} - \hu \rV^2 + C h^{2\beta_q}\lV q \rV_{\beta_q+1}^2 + C h^{2\beta_\psi}\lV \psi \rV_{\beta_\psi+1}^2\\
&\le C_{\bff,B} h^{2\beta_{\hu}}\lV \hu \rV_{\beta_{\hu}}^2 + C h^{2\beta_q}\lV q \rV_{\beta_q+1}^2 + C h^{2\beta_\psi}\lV \psi \rV_{\beta_\psi+1}^2.
\end{align*}
\end{proof}

\begin{remark}
It is instructive to express the estimate in \cref{cor:zerominconv}, for the corresponding minimizer $(u^{\smallh}, q^{\smallh}, \psi^{\smallh}) \in \cU^{\smallh} \times \cVh{\Gamma_C} \times \cVh{\Gamma_I}$ of \eqref{eq:nonquadmin}, as
\[
\cF(u^{\smallh},q^{\smallh},\psi^{\smallh} \where q) \le C_{\bff,B} h^{2\beta_{\hu}}\lV \hu \rV_{\beta_{\hu}}^2 + C h^{2\beta_q}\lV q \rV_{\beta_q+1}^2 + C h^{2\beta_\psi}\lV \psi \rV_{\beta_\psi+1}^2,
\]
showing that $\cF(u^{\smallh},q^{\smallh},\psi^{\smallh} \where q) \to 0$ (its globally minimal value) as $h \to 0$.
\end{remark}

The above results indicate that asymptotically, as $h\to 0$, the formulations \eqref{eq:Hmoneprinciple} and \eqref{eq:nonquadmin} approach each other. This can be interpreted that, in a sense, \eqref{eq:nonquadmin} behaves like the weak formulation \eqref{eq:weaksol} in the limit, since \eqref{eq:Hmoneprinciple} and \eqref{eq:weaksol} are related. An important consequence of this is the ``numerical conservation'' property of the least-squares formulation \eqref{eq:nonquadmin}, which is the topic of the next theorem. The classical notion of ``numerical conservation'' (or ``conservative schemes'') is associated with the result in \cite{1960Conservation} in the context of hyperbolic conservation laws; cf., \cite{GodlewskiHCL,LeVequeHyperbolic}. It guarantees that, when certain convergence occurs, the limit is a weak solution. As observed in \cite{2005HdivHyperbolic,OlsonPhDthesis}, also in the context of conservation laws, the particular discrete conservation property in the Lax-Wendroff theorem \cite{1960Conservation}, while sufficient, is not necessary for obtaining weak solutions. As in \cite{2005HdivHyperbolic,OlsonPhDthesis}, the considerations here do not fall precisely into the framework introduced in \cite{1960Conservation}, but they are similar in spirit. Namely, having appropriate convergence of the discrete solutions, the limit is guaranteed to be a weak solution to \eqref{eq:balance}. This result largely motivates the consideration of \eqref{eq:nonquadmin}. In fact, it is natural that \eqref{eq:nonquadmin} possesses such a property, since it is related to \eqref{eq:weaksol} by design. This relationship to the notion of a weak solution is associated with the ability of the method to provide correct approximations to piecewise $\cC^1$ (i.e., discontinuous) weak solutions to \eqref{eq:balance}.

\begin{theorem}[numerical conservation]\label{thm:weakconserv}
Let \eqref{eq:cGhmin} possess a minimizer $u^{\smallh} \in \cU^{\smallh}$ (or, equivalently, \eqref{eq:nonquadmin} possess a minimizer $(u^{\smallh}, q^{\smallh}, \psi^{\smallh}) \in \cU^{\smallh} \times \cVh{\Gamma_C} \times \cVh{\Gamma_I}$) and let the assumptions in \cref{cor:zerominconv} hold. Assume, in addition, $L^2(\Omega)$ convergence,
\begin{equation}\label{eq:L2convasm}
\lim_{h\to 0} \lV u^{\smallh} - \tu \rV = 0,
\end{equation}
for some function $\tu \in L^\infty(\Omega)$, and that $u^{\smallh}$ forms a bounded sequence in $L^\infty(\Omega)$ as $h\to 0$, i.e., there is a constant $B > 0$ such that $\lV \tu \rV_{L^\infty(\Omega)} \le B$ and $\lV u^{\smallh} \rV_{L^\infty(\Omega)} \le B$ as $h \to 0$. Then, $\tu$ is a weak solution to \eqref{eq:balance} in the sense of \eqref{eq:weaksol}.
\end{theorem}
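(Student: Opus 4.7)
The plan is to combine the functional convergence from \cref{cor:zerominconv} with the $H^{-1}$-coercivity bound of \cref{thm:relation}\ref{thm:Hmonecontrol} to deduce that $\wgradd \bff(u^{\smallh}) \to \ell_d$ in $\Hmone{\Gamma_C}{\Omega}$, and then transfer this convergence to the limit $\tu$ using continuity of the map $\bff(\cdot)$ from $L^\infty$-bounded sets of $L^2(\Omega)$ into $[L^2(\Omega)]^2$ and continuity of $\wgradd$. Since the problem of finding a weak solution is equivalent to solving $\wgradd \bff(\tu) = \ell_d$ in $\Hmone{\Gamma_C}{\Omega}$ (as established in \cref{ssec:Hmone} via \eqref{eq:Hmoneweak}), this will identify $\tu$ as a weak solution.

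First, I would apply \cref{cor:zerominconv} to conclude that $\cG^{\smallh}(u^{\smallh}\where q) \to 0$ as $h \to 0$. By \cref{thm:relation}\ref{thm:Hmonecontrol}, this implies
\[
\tfrac{1}{2}\lV \wgradd \bff(u^{\smallh}) - \ell_d \rV_{-1,\Gamma_C}^2 \le \cG^{\smallh}(u^{\smallh}\where q) \to 0,
\]
so $\wgradd \bff(u^{\smallh}) \to \ell_d$ in $\Hmone{\Gamma_C}{\Omega}$.

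Next, I would use the uniform $L^\infty$ bound $\lV u^{\smallh} \rV_{L^\infty(\Omega)}, \lV \tu \rV_{L^\infty(\Omega)} \le B$ together with the local Lipschitz assumption \eqref{eq:lipschitz} applied on $J = [-B,B]$, to obtain
\[
\lV \bff(u^{\smallh}) - \bff(\tu) \rV \le \sqrt{2}\, K_{\bff,J}\, \lV u^{\smallh} - \tu \rV \to 0,
\]
using the $L^2$ convergence hypothesis \eqref{eq:L2convasm}. Since $\wgradd \col [L^2(\Omega)]^2 \to \Hmone{\Gamma_C}{\Omega}$ is continuous (as noted following \cref{lem:wdivHmone}), it follows that $\wgradd \bff(u^{\smallh}) \to \wgradd \bff(\tu)$ in $\Hmone{\Gamma_C}{\Omega}$.

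Combining the two convergences via the triangle inequality,
\[
\lV \wgradd \bff(\tu) - \ell_d \rV_{-1,\Gamma_C} \le \lV \wgradd \bff(\tu) - \wgradd \bff(u^{\smallh}) \rV_{-1,\Gamma_C} + \lV \wgradd \bff(u^{\smallh}) - \ell_d \rV_{-1,\Gamma_C} \to 0,
\]
which forces $\wgradd \bff(\tu) = \ell_d$ in $\Hmone{\Gamma_C}{\Omega}$. By the definition of $\wgradd$ and \eqref{eq:defld}, this is exactly the weak formulation \eqref{eq:weaksol}, so $\tu \in L^\infty(\Omega)$ is a weak solution to \eqref{eq:balance}. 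The main obstacle in this argument is not analytical but rather conceptual: one must carefully package together three previously-established facts (the functional decay from \cref{cor:zerominconv}, the coercivity of $\cG^{\smallh}$ with respect to the negative norm from \cref{thm:relation}\ref{thm:Hmonecontrol}, and the continuity properties of $\bff$ and $\wgradd$) to pass to the limit in $\Hmone{\Gamma_C}{\Omega}$ rather than in $L^2(\Omega)$, exploiting the crucial fact that the $H^{-1}$-type machinery in \cref{ssec:Hmone} was set up precisely so that the weak-solution condition coincides with an equality in this dual space.
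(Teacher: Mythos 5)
Your proposal is correct and follows essentially the same route as the paper: both combine the functional decay from \cref{cor:zerominconv} with the lower bound of \cref{thm:relation}\ref{thm:Hmonecontrol} to get $\lV \wgradd \bff(u^{\smallh}) - \ell_d \rV_{-1,\Gamma_C} \to 0$, and the Lipschitz bound on $J=[-B,B]$ to get $\bff(u^{\smallh}) \to \bff(\tu)$ in $[L^2(\Omega)]^2$. The only cosmetic difference is that you pass to the limit in the $\Hmone{\Gamma_C}{\Omega}$ norm via continuity of $\wgradd$ and a triangle inequality, whereas the paper tests against each fixed $\phi \in \Hone{\Gamma_C}{\Omega}$ and takes limits of the two pairings separately; these are the same argument.
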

\begin{proof}
As in the proof of \cref{cor:L2}, by \eqref{eq:lipschitz}, \eqref{eq:L2convasm} implies that $\lim_{h\to 0} \lV \bff(u^{\smallh}) - \bff(\tu) \rV = 0$. Thus,
\[
(\bff(u^{\smallh}), \grad\phi) \xrightarrow{h\to 0} (\bff(\tu), \grad\phi), \quad\forall\phi\in \Hone{\Gamma_C}{\Omega}.
\]
By \cref{cor:zerominconv} and \cref{thm:relation}\ref{thm:Hmonecontrol}, it holds that $\lim_{h\to 0} \lV \wgradd \bff(u^{\smallh}) - \ell_d \rV_{-1,\Gamma_C} = 0$. This implies, using the definitions of $\wgradd$ and $\ell_d$, in \eqref{eq:defld}, that
\[
-(\bff(u^{\smallh}), \grad\phi) \xrightarrow{h\to 0} (r, \phi) - \li \bff(g)\cdot\bn, \phi \ri_\Gamma, \quad\forall\phi\in \Hone{\Gamma_C}{\Omega}.
\]
Combining the above results provides (cf., \eqref{eq:weaksol})
\[
-(\bff(\tu), \grad\phi) = (r, \phi) - \li \bff(g)\cdot\bn, \phi \ri_\Gamma, \quad\forall\phi\in \Hone{\Gamma_C}{\Omega}.
\]
\end{proof}

The assumptions that $\hu^{\smallh}$ and $u^{\smallh}$ in \cref{cor:zerominconv,thm:weakconserv} are bounded in $L^\infty(\Omega)$ are reasonable, especially when approximating piecewise $\cC^1$ weak solutions to \eqref{eq:balance}. Similar assumptions can be seen in the classical result in \cite{1960Conservation}; see also \cite{GodlewskiHCL}. Additionally, it is easy to see that the convergence assumption \eqref{eq:L2convasm} can be replaced by a convergence in the $L^1(\Omega)$ norm or in a pointwise a.e. sense, like in \cite{1960Conservation,GodlewskiHCL}.

Note that in \cref{thm:weakconserv}, unlike \cite{2005HdivHyperbolic,OlsonPhDthesis}, no special treatment and assumptions associated with the boundary conditions are necessary. This is due to the Helmholtz decomposition in \cref{thm:helmholtz}, which naturally (i.e., in a way related to \eqref{eq:weaksol}) incorporates the boundary conditions into the formulation; see the end of \cref{ssec:helmholtz}.

\subsection{Regularizing the formulation and coercivity in the \texorpdfstring{$L^2$}{L\^{}2} norm}
\label{ssec:reg}

The conservation property in \cref{thm:weakconserv} is natural for the least-squares formulation \eqref{eq:nonquadmin}, since it is related to the weak form \eqref{eq:weaksol} and the $H^{-1}$-based principle \eqref{eq:Hmoneprinciple}. The $L^2(\Omega)$ convergence, like \eqref{eq:L2convasm}, is more challenging to show for formulations closely related to the notion of a weak solution \eqref{eq:weaksol}. Here, we comment on the lack of coercivity, in terms of the $L^2(\Omega)$ norm, of the functional $\cF$ in \eqref{eq:cF} and propose a simple regularization that provides a parameter-dependent $L^2$ coercivity.
The discussion of the $L^2$ convergence properties of the non-regularized method is left for \cref{ssec:conv}.

Notice that the assumptions on the minimizer, $u^{\smallh}$, in \cref{thm:weakconserv} are utilized to obtain the convergence in $[L^2(\Omega)]^2$ of $\bff(u^{\smallh})$ to $\bff(\tu)$, i.e., of the nonlinear term. Conversely, in general, it is reasonable to assume that
\begin{equation}\label{eq:nonlincoerc}
c \lV v_1 - v_2 \rV \le \lV \bff(v_1) - \bff(v_2) \rV,\quad\forall v_1,v_2\in L^\infty(\Omega),
\end{equation}
for some constant $c>0$. Particularly, when time is involved, as mentioned in \cref{rem:iota}, it typically holds $f_1 \equiv \iota$, the identity function on $\bbR$, in which case \eqref{eq:nonlincoerc} is trivial. Thus, under assumption \eqref{eq:nonlincoerc}, $L^2$ convergence of $\bff(u^{\smallh})$ implies $L^2$ convergence of $u^{\smallh}$ and, analogously, the coercivity of the functional in the $[L^2(\Omega)]^2$ norm of $\bff(v)$ implies its coercivity in the $L^2(\Omega)$ norm of $v$. Therefore, the questions of coercivity and convergence of $\cF$ and \eqref{eq:nonquadmin} reduce to the respective properties in terms of $\bff(v)$. 

It is not difficult to observe that a uniform coercivity, which provides the respective control of the $[L^2(\Omega)]^2$ norm of $\bff(v)$, is not an innate property of the functional $\cF$ in \eqref{eq:cF}. Indeed, using  $\bff(v) = \grad q_{v} + \gradp \psi_{v}$, it holds (recall \eqref{eq:semidiscrrel} and \cref{lem:wdivHmone}) that $\cG(v, 0) = \halff \lV \grad q_{v} \rV^2$, whereas $\lV\bff(v)\rV^2 = \lV \grad q_{v} \rV^2 + \lV \gradp \psi_{v} \rV^2$. Thus, the functional can be made small, without making $\lV\bff(v)\rV^2$ small and, hence (due to \eqref{eq:lipschitz}), without making $\lV v \rV^2$ small. That is, the functional $\cF$ provides only partial control of the $[L^2(\Omega)]^2$ norm. Namely, it explicitly controls only the $\Hone{\Gamma_C}{\Omega}$ component, $q_v$, of the Helmholtz decomposition, but not the $\Hone{\Gamma_I}{\Omega}$ component, $\psi_v$. This is associated with the relation to \eqref{eq:weaksol} and \eqref{eq:Hmoneweak}; cf., \eqref{eq:wdivnull} and \cref{rem:multisol}. Moreover, even in the linear case, one can construct an oscillatory counterexample (see \cite{PhDthesis}) to demonstrate the lack of an appropriate uniform coercivity.

The above considerations make it clear that, to obtain $L^2$ coercivity, control of $\gradp \psi_{v}$ needs to be explicitly introduced. Therefore, consider the regularized functional
\begin{equation}\label{eq:reg}
\begin{split}
\cF_\varepsilon(v,p,\mu\where q) &= \cF(v,p,\mu\where q) + \varepsilon^2 \lV \gradp \mu \rV^2\\
&= \lV \bff(v) - \grad p - \gradp \mu \rV^2 + \lV \grad p - \grad q \rV^2 + \varepsilon^2 \lV \gradp \mu \rV^2,
\end{split}
\end{equation}
for some (small) $\varepsilon > 0$, which employs additional explicit $\varepsilon$-dependent control of the $\Hone{\Gamma_I}{\Omega}$ component, $\mu$. The regularization provides $\varepsilon$-dependent coercivity that controls the $[L^2(\Omega)]^2$ norm of the flux, $\bff(v)$, which, in turn, bounds the $L^2(\Omega)$ norm of $v$, owing to \eqref{eq:nonlincoerc}. This is shown, in detail, in the following theorem.

\begin{theorem}[continuity and $\varepsilon$-dependent coercivity in $L^2$]\label{thm:regcoerc}
It holds
\[
\cF_\varepsilon(v,p,\mu\where 0)\le 3 \lp \lV \bff(v) \rV^2 + \lV\grad p\rV^2 \rp + (2+\varepsilon^2) \lV\grad \mu\rV^2,
\]
for $\varepsilon \ge 0$, and
\[
\varepsilon^2\lp \frac{1}{2+3\varepsilon^2} \lV \bff(v) \rV^2 + \frac{1}{2}\lV\grad \mu\rV^2 \rp + \frac{1}{3}\lV\grad p\rV^2 \le \cF_\varepsilon(v,p,\mu\where 0),
\]
for any $\varepsilon > 0$.
\end{theorem}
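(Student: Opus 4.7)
The plan is to exploit an orthogonality relation between the test functions that is built into \cref{thm:helmholtz}, expand $\cF_\varepsilon(v,p,\mu\where 0)$ accordingly, and then read off both bounds via Young's inequality with properly chosen weights. The key observation is that, with $\Gamma_1 = \Gamma_C$ and $\Gamma_2 = \Gamma_I$, any $p \in \Hone{\Gamma_C}{\Omega}$ and $\mu \in \Hone{\Gamma_I}{\Omega}$ produce the pair $(\grad p, \gradp \mu)$ which, by uniqueness of the Helmholtz decomposition applied to $\grad p + \gradp \mu$, \emph{is} that decomposition and is therefore $L^2$-orthogonal; in particular $(\grad p, \gradp \mu) = 0$. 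Combined with the pointwise identity $\lV \gradp \mu \rV = \lV \grad \mu \rV$, expanding the leading squared residual gives the common starting point
\[
\cF_\varepsilon(v,p,\mu \where 0) = \lV \bff(v) \rV^2 + 2\lV \grad p \rV^2 + (1+\varepsilon^2)\lV \grad \mu \rV^2 - 2(\bff(v), \grad p) - 2(\bff(v), \gradp \mu).
\]

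For the continuity bound, the plan is to apply the elementary Young inequality $|2(x,y)| \le \lV x \rV^2 + \lV y \rV^2$ independently to $-2(\bff(v), \grad p)$ and to $-2(\bff(v), \gradp \mu)$; the two resulting estimates sum to $2\lV \bff(v)\rV^2 + \lV \grad p\rV^2 + \lV \grad\mu\rV^2$, so that insertion into the identity above yields exactly $\cF_\varepsilon \le 3\lV \bff(v)\rV^2 + 3\lV \grad p\rV^2 + (2+\varepsilon^2)\lV \grad \mu\rV^2$, with no slack.

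For the coercivity bound, I would first regroup the cross terms as $-2(\bff(v), \grad p + \gradp \mu)$ and then apply the scaled Young inequality $|2(x,y)| \le s^{-1}\lV x \rV^2 + s\lV y \rV^2$ with a single parameter $s > 1$; invoking the orthogonality once more splits $s\lV \grad p + \gradp \mu \rV^2$ cleanly into $s\lV \grad p \rV^2 + s\lV \grad \mu \rV^2$, giving
\[
\cF_\varepsilon \ge (1 - 1/s)\lV \bff(v) \rV^2 + (2 - s)\lV \grad p \rV^2 + (1 + \varepsilon^2 - s)\lV \grad \mu \rV^2.
\]
Taking the explicit choice $s = (2+3\varepsilon^2)/(2+2\varepsilon^2)$, which exceeds $1$ for every $\varepsilon > 0$, tunes $1 - 1/s = \varepsilon^2/(2+3\varepsilon^2)$ to its target value exactly; the remaining checks $2 - s = (2+\varepsilon^2)/(2+2\varepsilon^2) \ge 1/3$ and $1+\varepsilon^2 - s = \varepsilon^2(1+2\varepsilon^2)/(2+2\varepsilon^2) \ge \varepsilon^2/2$ each reduce to the trivial $\varepsilon^2 \ge 0$.

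The main obstacle is the very first step: recognizing that the $L^2$-orthogonality in \cref{thm:helmholtz} applies to the arbitrary test functions $p$ and $\mu$ appearing in $\cF_\varepsilon$, not merely to the decomposition of a fixed flux such as $\bff(\hu)$. Without that orthogonality, the cross term $(\grad p, \gradp \mu)$ would persist in every estimate and the clean constants $3$, $2+\varepsilon^2$, and $\varepsilon^2/(2+3\varepsilon^2)$ could not be attained. Once it is in place, the remaining work is Young's inequality with a carefully chosen scaling, and pinning down the precise $s$ that matches the target $\lV \bff(v)\rV^2$ coefficient is the only nontrivial calculation.
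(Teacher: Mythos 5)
Your proof is correct --- I verified the expansion of $\cF_\varepsilon(v,p,\mu\where 0)$, the choice $s=(2+3\varepsilon^2)/(2+2\varepsilon^2)$, and all three coefficient checks --- but the coercivity half takes a genuinely different route from the paper's. You rest everything on the orthogonality $(\grad p,\gradp\mu)=0$ for arbitrary $p\in\Hone{\Gamma_C}{\Omega}$ and $\mu\in\Hone{\Gamma_I}{\Omega}$; this fact is true and is indeed the subspace orthogonality underlying \cref{thm:helmholtz} (it is also what the paper uses implicitly when deriving \eqref{eq:semidiscrrel}), and with it a single scaled Young inequality finishes both bounds. The paper's proof of the lower bound instead avoids the cross term entirely: it writes $\bff(v)-\gradp\mu=(\bff(v)-\grad p-\gradp\mu)+\grad p$ to obtain $\lV \bff(v)-\gradp\mu\rV^2+\lV\grad p\rV^2\le 3\cF(v,p,\mu\where 0)$, and then applies Young's inequality with $\rho=1+3\varepsilon^2/2$ to $\lV\bff(v)-\gradp\mu\rV^2+3\varepsilon^2\lV\gradp\mu\rV^2$ --- the same tuning that yields $\varepsilon^2/(2+3\varepsilon^2)$ in your calculation. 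Your route buys an exact identity for $\cF_\varepsilon$ and, incidentally, the stronger coefficient $(2+\varepsilon^2)/(2+2\varepsilon^2)\ge 1/2$ on $\lV\grad p\rV^2$ where the theorem only claims $1/3$; the paper's route buys independence from the orthogonality, so the estimate would survive settings where the two potential subspaces are only approximately orthogonal. One caution on your justification of the key fact: deducing $(\grad p,\gradp\mu)=0$ from ``uniqueness of the decomposition of $\grad p+\gradp\mu$'' is circular, since identifying that decomposition with the pair $(\grad p,\gradp\mu)$ already requires $(\gradp\mu,\grad\phi)=0$ in \eqref{eq:weakgrad}. Better to state it as a property of the subspaces $\grad\Hone{\Gamma_C}{\Omega}$ and $\gradp\Hone{\Gamma_I}{\Omega}$, proved by integration by parts using $\gradd\gradp\mu=0$ and the complementary vanishing traces on $\Gamma_C$ and $\Gamma_I$ (or extracted from \cref{thm:helmholtz} by applying it to $\bv=\gradp\mu$ and concluding $q=0$).
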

\begin{proof}
The upper bound follows from the basic inequality, for real numbers, $2ab \le a^2 + b^2$ and $\lV \gradp \mu \rV = \lV \grad \mu \rV$.

To address the coercivity, observe that
\begin{align*}
\lV \bff(v) - \gradp \mu \rV^2 + \lV\grad p\rV^2 &= \lV \bff(v) - \grad p - \gradp \mu + \grad p \rV^2 + \lV\grad p\rV^2\\
&\le 2 \lV \bff(v) - \grad p - \gradp \mu \rV^2 + 3\lV\grad p\rV^2 \le 3\cF(v,p,\mu\where 0).
\end{align*}
Owing to the Young's inequality, it holds, for any $\rho > 0$ and particularly for $\rho = 1 + 3\varepsilon^2/2$,
\begin{align*}
\lV \bff(v) - \gradp \mu \rV^2 + 3\varepsilon^2 \lV\gradp \mu\rV^2 &= \lV \bff(v) \rV^2 + (1 + 3\epsilon^2)\lV \gradp \mu \rV^2 - 2 (\bff(v), \gradp \mu)\\
&\ge \lV \bff(v) \rV^2 + (1 + 3\epsilon^2)\lV \gradp \mu \rV^2 - \frac{\lV \bff(v) \rV^2}{\rho} - \rho\lV \gradp \mu\rV^2\\
&= \frac{\rho - 1}{\rho}\lV \bff(v) \rV^2 + (1 + 3\epsilon^2 - \rho)\lV \gradp \mu \rV^2\\
&= \frac{3\varepsilon^2}{2+3\varepsilon^2}\lV \bff(v) \rV^2 + \frac{3\varepsilon^2}{2}\lV \gradp \mu \rV^2.
\end{align*}
Combining the above inequalities provides the coercivity estimate.
\end{proof}

\begin{remark}
As before, it is instructive, using  $\bff(v) = \grad q_{v} + \gradp \psi_{v}$, to consider the reduced functional
\[
\cG_\varepsilon(v\where 0) = \min_{\substack{p\in \Hone{\Gamma_C}{\Omega}\\ \mu\in \Hone{\Gamma_I}{\Omega}}} \cF_\varepsilon(v,p,\mu\where 0) = \half \lV \grad q_{v} \rV^2 + \frac{\varepsilon^2}{1+\varepsilon^2} \lV \gradp \psi_{v} \rV^2 \ge \frac{\varepsilon^2}{1+\varepsilon^2} \lV \bff(v) \rV^2,
\]
for $\varepsilon \le 1$, obtaining an exact estimate.
\end{remark}

By taking $\varepsilon \to 0$ as $h \to 0$, the properties, especially the numerical conservation, of the formulation, established in \cref{ssec:conservation}, are maintained in the limit, as $h \to 0$, for the regularized version. In particular, let $\varepsilon = h^\eta$, for some $\eta \ge 0$, where increasing $\eta$ weakens the regularization. \blue{In this case, $\varepsilon = h^\eta$, \cref{thm:regcoerc} guarantees an \emph{explicit} mesh-dependent control of the $L^2$ norm, independently of the choice of finite element spaces. In contrast, \cref{ssec:conv} discusses an \emph{implicit} mesh-dependent control of the $L^2$ norm, based on the properties of the combination of finite element spaces used in the (non-regularized) discrete formulation. As demonstrated in \cref{thm:basicconv} below, $L^2$ convergence can be obtained in the presence of mesh-dependent $L^2$ coercivity. Note that such a result holds for both implicit and explicit mesh-dependent coercivity.} In our experience, the regularizing term acts as explicit ``artificial diffusion'' (or ``viscosity'') added to the formulation, i.e., taking $\eta$ close to zero, results in smoother approximate solutions. Moreover, in the extreme case of $\eta = 0$, the regularized functional (weakly) enforces that the flux is a gradient (potential) field, which in essence makes the problem elliptic (a diffusion problem).

The discussion in the previous paragraph is indicative of the potential benefits of regularizing the functional in \eqref{eq:cF} for obtaining an explicit form that is easier to tune and potentially more amenable to establishing properties of the method, like convergence to the physically admissible solution, since the regularization term acts as explicitly added ``vanishing diffusion'' (or ``viscosity''). Moreover, the regularization can be useful for facilitating the construction of efficient preconditioners for the resulting linear systems, which is of important practical value. This is currently an ongoing work; see also \cref{sec:conclusions}.

\subsection{Convergence properties}
\label{ssec:conv}

The theoretical results above show: when $L^2$ convergence holds, the approximations converge to a weak solution; for the non-regularized functional, only convergence in an $H^\mone$-type norm is guaranteed, while $L^2$ convergence is not provided by the innate properties of the functional and, hence, it is more challenging to establish. There are two ways to obtain convergence in the $L^2$ norm: one -- by explicitly regularizing the functional, providing explicit control of the $L^2$ norm; and another -- via a careful choice of the finite element spaces, on which the minimization is performed, providing implicit discrete control of the $L^2$ norm. Here, appropriate properties of the finite element spaces are presented in the form of special inf-sup conditions and how they lead to $L^2$ convergence is shown. This can be viewed as an implicit version of the regularization, where control of the $L^2$ norm is obtained in the discrete setting, despite the fact that such control is not innately a property of the (non-regularized) functional. In view of the discussion in the beginning of \cref{ssec:reg}, this can be seen as providing implicit discrete control of the $\Hone{\Gamma_I}{\Omega}$ component, $\psi_v$, of the Helmholtz decomposition. That is, when the finite element spaces are carefully chosen, explicit regularization is not needed to obtain $L^2$ convergence. Numerical results (\cref{sec:numerical}) demonstrate that the regularized and non-regularized formulations behave similarly, providing desired results -- $L^2$ convergence to the physically admissible weak solution.

Let \eqref{eq:cGhmin} have a minimizer $u^{\smallh} \in \cU^{\smallh}$ and $\bff(u^{\smallh}) = \grad q_{u^{\smallh}} + \gradp \psi_{u^{\smallh}}$. Consider $q \in \Hone{\Gamma_C}{\Omega}$ as defined in \cref{eq:fuhelmholtz,eq:weakq,eq:ellipticforq} and assume, for some $C>0$ and $\varkappa > 0$, that
\begin{equation}\label{eq:qrate}
\lV \wgradd \bff(u^{\smallh}) - \ell_d \rV_{-1,\Gamma_C} = \lV \grad (q_{u^{\smallh}} - q) \rV \le C h^\varkappa.
\end{equation}
\Cref{thm:relation}\ref{thm:Hmonecontrol} and \cref{cor:zerominconv} guarantee that $\varkappa \ge \min\set{\beta_{\hu},\allowbreak \beta_{q},\allowbreak \beta_{\psi}}$. Notice that \eqref{eq:qrate} is essentially an error (or approximation) bound with a rate of convergence $\varkappa$. Further, assume the (semi-discrete) inf-sup condition
\begin{equation}\label{eq:sdinfsup}
\inf_{\bs^{\smallh} \in \bcS^{\smallh}} \sup_{p \in \Hone{\Gamma_C}{\Omega}}\frac{\lv (\bs^{\smallh}, \grad p) \rv}{\lV \bs^{\smallh} \rV \lV \grad p \rV} \ge c h^\alpha,
\end{equation}
for some constants $c,\alpha>0$, where $\bcS^{\smallh} = \myspan\set{\bff(v^{\smallh})\where v^{\smallh} \in \cU^{\smallh}} \subset [L^2(\Omega)]^2$. Particular inf-sup bounds of the type \eqref{eq:sdinfsup}, with $\alpha=1$, are \blue{proved} in \cite{PhDthesis} for the linear case and a few choices of $\cU^{\smallh}$, including standard Lagrangian elements\blue{, which are used in the numerical experiments in \cref{sec:numerical}}.

\begin{remark}
A careful inspection of \cref{thm:relation}\ref{anotherlabel} suggests that the rate of functional decay implied by \eqref{eq:almostL2} and \cref{cor:zerominconv} can be pessimistic. More precisely, in view  of \cref{rem:betterrate}, when $\cU^{\smallh}$ is an $H^1(\Omega)$-conforming space, which is the case for the numerical results in \cref{sec:numerical}, the respective component $\psi_{v^{\smallh}}$ can be sufficiently smooth, so that the decay rate is determined by the smoothness of the component $q \in \Hone{\Gamma_C}{\Omega}$. Therefore, it is reasonable to expect that the actual rate of functional decay would be around $\beta_q$ and $\varkappa \approx \beta_q$. Accordingly, some of the numerical results in \cref{sec:numerical} show faster rates of functional decay than predicted by \cref{cor:zerominconv}.
\end{remark}

The simplest setting that can provide $L^2$ convergence follows. 

\begin{theorem}\label{thm:basicconv}
Let \eqref{eq:qrate} and \eqref{eq:sdinfsup} hold with $\varkappa > \alpha$ and $\cU^{\smallh}$ form an increasing sequence of nested spaces as $h\to 0$. Then, $\bff(u^{\smallh})$ converges, with a rate $\cO(h^{\varkappa - \alpha})$, in $[L^2(\Omega)]^2$ as $h\to 0$.
\end{theorem}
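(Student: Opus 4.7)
The plan is to exploit the $L^2$-orthogonality of the Helmholtz decomposition in \cref{thm:helmholtz} together with the mesh-dependent inf-sup \eqref{eq:sdinfsup} to transfer the $\cO(h^\varkappa)$ control of only the $\grad$-component, provided by \eqref{eq:qrate}, into $\cO(h^{\varkappa-\alpha})$ control of the full $\bff(u^{\smallh})$, and then to close a Cauchy/telescope argument using the nesting of the spaces $\cU^{\smallh}$.

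First, for any two mesh sizes with $h' \le h$ in the refinement sequence, the nesting $\cU^{\smallh} \subset \cU^{h'}$ yields $\bcS^{\smallh} \subset \bcS^{h'}$, so $\bff(u^{\smallh}) - \bff(u^{h'}) \in \bcS^{h'}$ (a linear subspace). I apply \eqref{eq:sdinfsup} at scale $h'$ to this element to get
\[
c(h')^\alpha \lV \bff(u^{\smallh}) - \bff(u^{h'}) \rV \;\le\; \sup_{p\in\Hone{\Gamma_C}{\Omega}} \frac{\lv (\bff(u^{\smallh}) - \bff(u^{h'}), \grad p) \rv}{\lV \grad p \rV}.
\]
Writing $\bff(u^{\smallh}) = \grad q_{u^{\smallh}} + \gradp \psi_{u^{\smallh}}$ and similarly at $h'$, the Helmholtz $L^2$-orthogonality kills the $\gradp$-contributions against $\grad p$, and, exactly as in the proof of \cref{lem:wdivHmone}, the supremum reduces to $\lV \grad(q_{u^{\smallh}} - q_{u^{h'}}) \rV$. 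Triangle inequality combined with \eqref{eq:qrate} bounds this by $2Ch^\varkappa$, yielding
\[
\lV \bff(u^{\smallh}) - \bff(u^{h'}) \rV \;\le\; \frac{2 C\, h^\varkappa}{c\,(h')^\alpha}, \qquad h' \le h.
\]

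Second, to extract convergence I would specialize to a geometric refinement sequence $h_k = h_0/2^k$ (extracting such a subsequence from the given nested family if needed). Applied to consecutive levels, the bound gives $\lV \bff(u^{h_{k+1}}) - \bff(u^{h_k}) \rV \le (2^{\alpha+1}C/c)\,h_k^{\varkappa-\alpha}$, which is the general term of a convergent geometric series since $\varkappa > \alpha$. Hence $\{\bff(u^{h_k})\}$ is Cauchy in $[L^2(\Omega)]^2$ and converges to some $\bff^\ast$, and summing the tail gives $\lV \bff(u^{h_K}) - \bff^\ast \rV = \cO(h_K^{\varkappa-\alpha})$, which is the claimed rate.

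The main obstacle I anticipate is a \emph{scale mismatch} between the two hypotheses: \eqref{eq:sdinfsup} is only usable at the finer mesh $h'$, whereas \eqref{eq:qrate} controls errors at the coarser scale $h$; a direct Cauchy bound between arbitrary mesh pairs $(h,h')$ therefore produces the useless ratio $h^\varkappa/(h')^\alpha$ when $h'\ll h$, and the telescoping on a quasi-uniform refinement sequence is the device that matches these two scales. A small residual check is that the limit $\bff^\ast$ is independent of the refinement sequence (any two nested sequences can be interleaved into a common one), so the result is genuinely a statement about $h\to 0$.
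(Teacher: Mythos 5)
Your proposal is correct and follows essentially the same route as the paper's proof: nestedness places $\bff(u^{\smallh})-\bff(u^{\smallh'})$ in the finer space $\bcS^{\smallh'}$, the inf-sup condition \eqref{eq:sdinfsup} (equivalently the discrete coercivity of $\wgradd$) is applied there, \cref{lem:wdivHmone} identifies the resulting dual norm with $\lV\grad(q_{u^{\smallh}}-q_{u^{\smallh'}})\rV$, which \eqref{eq:qrate} and the triangle inequality bound by $\cO(h^\varkappa)$, and a geometric-series Cauchy argument on consecutive dyadic levels closes the proof. Your remark on the scale mismatch correctly explains why the paper likewise compares only $h$ with $h/2$ rather than arbitrary pairs.
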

\begin{proof}
Assumption \eqref{eq:qrate} implies that $\lV \grad (q_{u^{\smallh}} - q_{u^{h/2}}) \rV \le C h^\varkappa$. Clearly, from the definition of $\wgradd$, \eqref{eq:sdinfsup} is equivalent to
\begin{equation}\label{eq:discrcoerc}
\lV \wgradd \bs^{\smallh} \rV_{-1,\Gamma_C} \ge c h^\alpha \lV \bs^{\smallh} \rV,\quad\forall \bs^{\smallh} \in \bcS^{\smallh}.
\end{equation}
The nestedness of the $\cU^{\smallh}$ spaces provides $[\bff(u^{\smallh}) - \bff(u^{\smallh/2})] \in \bcS^{\smallh/2}$ and, by \eqref{eq:discrcoerc},
\[
\lV \wgradd [\bff(u^{\smallh}) - \bff(u^{\smallh/2})] \rV_{-1,\Gamma_C} \ge c h^\alpha \lV \bff(u^{\smallh}) - \bff(u^{\smallh/2}) \rV.
\]
The above estimates and \cref{lem:wdivHmone} imply that $\lV \bff(u^{\smallh}) - \bff(u^{\smallh/2}) \rV \le C h^{\varkappa - \alpha}$, which, since $\varkappa > \alpha$, can be used to show that $\bff(u^{\smallh})$ forms a Cauchy sequence in $[L^2(\Omega)]^2$.
\end{proof}

Here, the inf-sup condition \eqref{eq:sdinfsup} is utilized differently compared to the setting of mixed finite element methods \cite{BrennerFEM} or of \cite{2018LLstarAndInverse}. Typically, inf-sup conditions are easily satisfied in the continuous (i.e., infinite-dimensional) case and finite element spaces are appropriately selected to maintain the property in the discrete setting. In our considerations, a continuous version of \eqref{eq:sdinfsup} does not hold since it is essentially the already discussed (\cref{ssec:reg}) uniform $L^2$ coercivity; that is, \eqref{eq:sdinfsup} is a relation that can hold only discretely and fails in the continuous setting. Namely, \eqref{eq:sdinfsup} is an assumption on the ``discrete coercivity'' \eqref{eq:discrcoerc}. Moreover, the condition \eqref{eq:sdinfsup} restrains the space $\cU^{\smallh}$ (based on the flux $\bff$), thus potentially limiting the freedom of choice of $\cU^{\smallh}$.

Notice that assumption \eqref{eq:sdinfsup} involves only the discrete space $\cU^{\smallh}$, without taking into account $\cVh{\Gamma_C}$ and $\cVh{\Gamma_I}$, used in the least-squares formulation \eqref{eq:nonquadmin}. That is, even if $\cVh{\Gamma_C}$ and $\cVh{\Gamma_I}$ approach (or are replaced by) continuous spaces, \cref{thm:basicconv} still guarantees $L^2$ convergence, essentially demonstrating the $L^2$ convergence of the $H^\mone$-based formulation \eqref{eq:Hmoneprinciple}. However, having discrete spaces for the Helmholtz decomposition, especially $\cVh{\Gamma_I}$, can enhance the discrete control of the $L^2$ norm, via enhancing the control of the $\Hone{\Gamma_I}{\Omega}$ component, and thus improving on the $L^2$ convergence of \eqref{eq:nonquadmin}. This is discussed next.

We introduce an inf-sup condition that is more suitable for the fully discrete formulation \eqref{eq:nonquadmin}. First, define the following distance and a corresponding subset of $\bcS^{\smallh}$, of fluxes that are, in a sense, ``close'' to $\cVh{\Gamma_I}$:
\begin{gather*}
R_{u^{\smallh}} = \min_{\mu^{\smallh} \in \scVh{\Gamma_I}} \lV \gradp (\mu^{\smallh} - \psi_{u^{\smallh}}) \rV, \text{ (here, $\bff(u^{\smallh}) = \grad q_{u^{\smallh}} + \gradp \psi_{u^{\smallh}}$)}\\
\bcR^{\smallh} = \set{ \bs^{\smallh} \in \bcS^{\smallh} \where \textstyle{\min_{\mu^{\smallh} \in \scVh{\Gamma_I}}} \lV \gradp (\mu^{\smallh} - \psi_{\bs^{\smallh}}) \rV \le 2 R_{u^{2h}},\text{ where } \bs^{\smallh} = \grad q_{\bs^{\smallh}} + \gradp \psi_{\bs^{\smallh}}},
\end{gather*}
where $u^{\smallh}$ and $u^{2\smallh}$ denote minimizers of \eqref{eq:cGhmin} for respective mesh parameters $h$ and $2h$. Assume the following ``restricted'' version of the inf-sup condition, for some $c,\gamma>0$:
\begin{equation}\label{eq:rdinfsup}
\inf_{\br^{\smallh} \in \bcR^{\smallh}} \sup_{p \in \Hone{\Gamma_C}{\Omega}}\frac{\lv (\br^{\smallh}, \grad p) \rv}{\lV \br^{\smallh} \rV \lV \grad p \rV} \ge c h^\gamma.
\end{equation}
It is trivial that if \eqref{eq:sdinfsup} holds, then \eqref{eq:rdinfsup} also holds and, generally, $\gamma \le \alpha$. \blue{Note that, while there are currently no explicit theoretical results showing cases when $\gamma < \alpha$, \eqref{eq:rdinfsup} represents a stronger version of \eqref{eq:sdinfsup}. It can explain some ``enhanced'' convergence rates in \cref{sec:numerical} (cf., Example 1) that would require stronger control than what \eqref{eq:sdinfsup} can provide (see the discussion following \cref{thm:moreconv} below, see also \cite{PhDthesis}).} The following convergence result is obtained, which is stronger than \cref{thm:basicconv}.

\begin{theorem}\label{thm:moreconv}
Let \eqref{eq:qrate} and \eqref{eq:rdinfsup} hold with $\varkappa > \gamma$ and $\cU^{\smallh}$, $ \cVh{\Gamma_I}$ form respective increasing sequences of nested spaces as $h\to 0$. Assume also that $R_{u^{\smallh}} \le R_{u^{2h}}$, for sufficiently small values of the mesh parameter $h$. Then, $\bff(u^{\smallh})$ converges, with a rate $\cO(h^{\varkappa - \gamma})$, in $[L^2(\Omega)]^2$ as $h\to 0$.
\end{theorem}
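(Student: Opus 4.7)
The plan is to adapt the proof of \cref{thm:basicconv}, substituting the restricted inf-sup condition \eqref{eq:rdinfsup} for \eqref{eq:sdinfsup}. The crucial new step will be to verify that the consecutive flux differences $\bff(u^{\smallh}) - \bff(u^{\smallh/2})$ lie not merely in $\bcS^{\smallh/2}$ but in the restricted set $\bcR^{\smallh/2}$, so that the (generally sharper) exponent $\gamma$ may replace $\alpha$ in the resulting coercivity bound.

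I would first reproduce the routine ingredients from the proof of \cref{thm:basicconv}: by \eqref{eq:qrate} and the triangle inequality, $\lV \grad(q_{u^{\smallh}} - q_{u^{\smallh/2}}) \rV \le C h^\varkappa$; by nestedness of $\cU^{\smallh}$, $\bff(u^{\smallh}) - \bff(u^{\smallh/2}) \in \bcS^{\smallh/2}$; and by uniqueness of the Helmholtz decomposition, the two components of $\bff(u^{\smallh}) - \bff(u^{\smallh/2})$ are exactly $\grad(q_{u^{\smallh}} - q_{u^{\smallh/2}})$ and $\gradp(\psi_{u^{\smallh}} - \psi_{u^{\smallh/2}})$.

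The main obstacle is verifying membership in $\bcR^{\smallh/2}$, which by definition requires $\min_{\mu} \lV \gradp(\mu - (\psi_{u^{\smallh}} - \psi_{u^{\smallh/2}})) \rV \le 2 R_{u^{\smallh}}$, with the minimum taken over $\mu$ in the $\cVh{\Gamma_I}$ space at mesh $h/2$. To this end, let $\mu^{\smallh}_\star$ and $\mu^{\smallh/2}_\star$ be best approximations (in the $\gradp$-norm) to $\psi_{u^{\smallh}}$ and $\psi_{u^{\smallh/2}}$ in the spaces $\cVh{\Gamma_I}$ at mesh parameters $h$ and $h/2$, respectively, so that they realize $R_{u^{\smallh}}$ and $R_{u^{\smallh/2}}$. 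Nestedness of those spaces places $\mu^{\smallh}_\star - \mu^{\smallh/2}_\star$ in the $h/2$ space, and the triangle inequality gives
\[
\min_{\mu} \lV \gradp\bigl(\mu - (\psi_{u^{\smallh}} - \psi_{u^{\smallh/2}})\bigr) \rV \le R_{u^{\smallh}} + R_{u^{\smallh/2}} \le 2 R_{u^{\smallh}},
\]
where the last inequality invokes the monotonicity hypothesis $R_{u^{\smallh/2}} \le R_{u^{\smallh}}$. This is precisely the defining condition for $\bcR^{\smallh/2}$.

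With membership established, \eqref{eq:rdinfsup} at mesh $h/2$, in its equivalent discrete-coercivity form $\lV \wgradd \br \rV_{-1,\Gamma_C} \ge c (h/2)^\gamma \lV \br \rV$ for $\br \in \bcR^{\smallh/2}$, applied to $\br = \bff(u^{\smallh}) - \bff(u^{\smallh/2})$ and combined with \cref{lem:wdivHmone}, yields $\lV \bff(u^{\smallh}) - \bff(u^{\smallh/2}) \rV \le C' h^{\varkappa - \gamma}$. Since $\varkappa > \gamma$, summing this bound over a dyadic sequence $h_k = h_0 2^{-k}$ produces a convergent geometric series, identifying $\{\bff(u^{h_k})\}$ as a Cauchy sequence in $[L^2(\Omega)]^2$ with the claimed convergence rate $\cO(h^{\varkappa - \gamma})$.
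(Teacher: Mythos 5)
Your proposal is correct and follows essentially the same route as the paper's own proof: the same reduction of \eqref{eq:rdinfsup} to a discrete coercivity bound, the same triangle-inequality argument with the two best approximants (your $\mu^{\smallh}_\star$, $\mu^{\smallh/2}_\star$ are the paper's $\nu^{2\smallh}$, $\nu^{\smallh}$ up to a shift of mesh indices) combined with nestedness and the monotonicity of $R_{u^{\smallh}}$ to place the flux difference in the restricted set, and the same concluding Cauchy-sequence argument. The only difference is cosmetic: you work with the pair $(h, h/2)$ rather than $(2h, h)$, and you spell out the geometric-series summation that the paper leaves implicit.
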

\begin{proof}
By \eqref{eq:qrate}, it holds that $\lV \grad (q_{u^{2h}} - q_{u^{\smallh}}) \rV \le C h^\varkappa$. \eqref{eq:rdinfsup} is equivalent to
\begin{equation}\label{eq:rdiscrcoerc}
\lV \wgradd \br^{\smallh} \rV_{-1,\Gamma_C} \ge c h^\gamma \lV \br^{\smallh} \rV,\quad\forall \br^{\smallh} \in \bcR^{\smallh}.
\end{equation}
The nestedness of the $\cU^{\smallh}$ spaces provides $[\bff(u^{2\smallh}) - \bff(u^{\smallh})] \in \bcS^{\smallh}$. Consider
\[
\nu^{2\smallh} = \argmin_{\mu^{2h} \in \cV^{2h}_{\Gamma_I}} \lV \grad (\mu^{2\smallh} - \psi_{u^{2h}}) \rV,\quad \nu^{\smallh} = \argmin_{\mu^{\smallh} \in \scVh{\Gamma_I}} \lV \grad (\mu^{\smallh} - \psi_{u^{\smallh}}) \rV.
\]
The nestedness of the $\cVh{\Gamma_I}$ spaces implies $\nu^{2\smallh} \in \cVh{\Gamma_I}$. Then
\begin{gather*}
\min_{\mu^{\smallh} \in \scVh{\Gamma_I}} \lV \grad (\mu^{\smallh} - (\psi_{u^{2h}} - \psi_{u^{\smallh}})) \rV \le  \lV \grad (\nu^{2\smallh} - \nu^{\smallh} - \psi_{u^{2h}} + \psi_{u^{\smallh}}) \rV\\
\le \lV \grad (\nu^{2\smallh} - \psi_{u^{2h}}) \rV + \lV \grad (\nu^{\smallh} - \psi_{u^{\smallh}}) \rV
= R_{u^{2h}} + R_{u^{\smallh}} \le 2 R_{u^{2h}}.
\end{gather*}
Thus, $[\bff(u^{2\smallh}) - \bff(u^{\smallh})] \in \bcR^{\smallh}$ and, by \eqref{eq:rdiscrcoerc},
\[
\lV \wgradd [\bff(u^{2\smallh}) - \bff(u^{\smallh})] \rV_{-1,\Gamma_C} \ge c h^\gamma \lV \bff(u^{2\smallh}) - \bff(u^{\smallh}) \rV.
\]
The above estimates and \cref{lem:wdivHmone} imply that $\lV \bff(u^{2\smallh}) - \bff(u^{\smallh}) \rV \le C h^{\varkappa - \gamma}$, which, since $\varkappa > \gamma$, can be used to show that $\bff(u^{\smallh})$ forms a Cauchy sequence in $[L^2(\Omega)]^2$.
\end{proof}

\begin{remark}
The assumption $R_{u^{\smallh}} \le R_{u^{2h}}$ in \cref{thm:moreconv} is reasonable since $R_{u^{\smallh}} \to 0$, as $h\to 0$, with a rate determined by a bound like \eqref{eq:approxpsi}.
\end{remark}

\Cref{thm:basicconv,thm:moreconv}, together with \eqref{eq:nonlincoerc}, show that $u^{\smallh}$, a minimizer of \eqref{eq:cGhmin}, converges in $L^2(\Omega)$ to some $\tu\in L^2(\Omega)$. They imply a respective convergence rate of $\cO(h^{\varkappa - \alpha})$ or $\cO(h^{\varkappa - \gamma})$. Under the assumption in \cref{thm:weakconserv} that $u^{\smallh}$ forms a bounded sequence in $L^\infty(\Omega)$, it can be shown that $\tu\in L^\infty(\Omega)$ and, by \cref{thm:weakconserv}, $\tu$ is a weak solution to \eqref{eq:balance}.
The purpose here is to justify that $\lV u^{\smallh} - \tu \rV \to 0$, as $h \to 0$, is reasonable due to control of the $L^2(\Omega)$ norm in the discrete setting, even when a uniform $L^2$ coercivity does not hold, and present basic tools that can aid the analysis of such convergence. The order $\alpha$ in \eqref{eq:sdinfsup} (or the equivalent discrete coercivity \eqref{eq:discrcoerc}) reflects a certain ``weak control'' of the $L^2(\Omega)$ norm. Notice that \eqref{eq:sdinfsup} takes into account the worst case in terms of control and the proper handling of that case is required in \cref{thm:basicconv} to obtain the $L^2$ convergence. In contrast, \cref{thm:moreconv} suggests that handling the globally worst case is not necessary, since, in view of \eqref{eq:cGhalt}, formulation \eqref{eq:nonquadmin} enforces certain ``proximity'' of $\psi_{u^{\smallh}}$ to the discrete space $\cVh{\Gamma_I}$, enhancing the control of the $L^2(\Omega)$ norm. Intuitively, this means that $\cVh{\Gamma_I}$ ``filters out'' modes with high-frequency cross stream oscillations, like the ones in \cite[Section 3.7.1]{PhDthesis}, that can cause the lack of uniform $L^2$ coercivity. That is, the proximity of $\psi_{u^{\smallh}}$ to $\cVh{\Gamma_I}$, measured by $R_{u^{\smallh}}$, enhances the control of the $\Hone{\Gamma_I}{\Omega}$ component of the Helmholtz decomposition \eqref{eq:fuhelmholtz}, while the $\Hone{\Gamma_C}{\Omega}$ component is naturally controlled by $\cF$. Indeed, since $R_{u^h}$ is vanishing, $\bcR^h$ contains fluxes, $\bs^h$, such that their respective $\psi_{\bs^h}$ are essentially contained in the finite element space $\cVh{\Gamma_I}$. This eliminates modes with high frequencies in the cross stream direction, that can be contained in $\bcS^h$, leading to $\gamma$ that is smaller than $\alpha$. Particularly, in the linear case, \cite{PhDthesis} shows that $\alpha = 1$ and argues that smaller values cannot be obtained for the worst case, when considering the entire $\bcS^h$, whereas numerical results (cf., e.g., \cref{sec:numerical}) indicate faster rates of convergence than $\alpha = 1$ would imply, suggesting $\gamma \approx 1/2$. Similarly, the regularization in \eqref{eq:reg} can be intuitively interpreted as an explicit form of eliminating modes with high-frequency cross stream oscillations, which can be manually tuned independently of the choice of spaces.

In summary, the convergence $\lV u^{\smallh} - \tu \rV \to 0$ stem from a complex relationship between the spaces involved in formulation \eqref{eq:nonquadmin} that depends on the flux vector, $\bff$. The ``regularizing'' effect of the finite element spaces for the components of the Helmholtz decomposition, particularly, the effect of the choice of $\cVh{\Gamma_I}$, is demonstrated in the numerical experiments in \cref{sec:numerical}, which also support the ideas of this section that the $L^2$ convergence is due to the balance between the orders of an approximation bound like \eqref{eq:qrate} and a discrete coercivity estimate of the type \eqref{eq:discrcoerc} or \eqref{eq:rdiscrcoerc}.

\section{Numerical results}
\label{sec:numerical}

This section is devoted to numerical results for formulation \eqref{eq:nonquadmin}, utilizing a damped Gauss-Newton procedure, applied to the inviscid Burgers equation, which is of the form \eqref{eq:balance} for $\bff(\upsilon) = [\upsilon, \upsilon^2/2]$, with a discontinuous source term, $r$. The examples are inspired by \cite{2012BurgersSource}, which also provides the exact solutions for computing errors.

\blue{The presented results are obtained using the non-regularized functional. Moreover,} as mentioned in the end of \cref{ssec:helmholtz}, the functional $\hcF$ in \eqref{eq:hcF} is replaced by the following \blue{(non-regularized)} ``augmented'' version (the notation is reused):
\[
\hcF(v^{\smallh},p^{\smallh},\mu^{\smallh} \where r, g) = \lV \bff(v^{\smallh}) - \grad p^{\smallh} - \gradp \mu^{\smallh} \rV^2 + \lV \grad p^{\smallh} \rV^2 + 2 \ell_d(p^{\smallh}) + \lV h^\halff (v^{\smallh} - g) \rV_{\Gamma_I}^2,
\]
where $g$ is given in \eqref{eq:balanceBC} and $\lV\cdot\rV_{\Gamma_I}$ denotes the norm in $L^2(\Gamma_I)$. In all cases, continuous finite element spaces on structured triangular meshes are used. The domain is $\Omega = \set{0 < t < 1,\, -0.25 < x < 1.75}$ and the meshes consist of right-crossed squares, \begin{tikzpicture}[scale=0.25] \draw (0,0) -- (1,1); \draw (0,0) -- (0,1); \draw (0,0) -- (1,0); \draw (0,1) -- (1,1); \draw (1,0) -- (1,1); \end{tikzpicture}, where the coarsest mesh has 16 squares in $t$ and 32 squares in $x$, while the finer meshes are obtained by consecutive uniform refinements.

\blue{Here, $u^{\smallh}$ denotes the obtained approximation, $\hu$ is the exact solution, $M^{\smallh}$ denotes the obtained minimal value of the functional, $\hcF$, on a mesh with a parameter $h$. That is, $M^{\smallh} = \hcF(u^{\smallh},q^{\smallh},\psi^{\smallh} \where r, g)$, where $(u^{\smallh},q^{\smallh},\psi^{\smallh}) = \argmin \hcF(v^{\smallh},p^{\smallh},\mu^{\smallh} \where r, g)$. Since the analytical minimal value of $\hcF$ (which is $-\lV \grad q \rV^2$, where $q$ solves \eqref{eq:weakq}) is not explicitly known during computation, the rate of functional convergence to its minimum as $h\to 0$ is measured via $M^{\smallh} - M^{\smallh/2}$.}

\blue{For completeness, on all convergence graphs, the squared $L^1(\Omega)$ norm of the error is also plotted. In practice, the $L^1(\Omega)$ norm can be viewed as a measure of ``sharpness'' of resolution of discontinuities, since it ``penalizes'' small errors more than the $L^2(\Omega)$ norm. Observe that the $L^1(\Omega)$ norm of the error demonstrates noticeably better behavior (higher convergence rates) compared to the $L^2(\Omega)$ norm, for all examples.}

\paragraph{Example 1 (a single shock)}

\begin{figure}
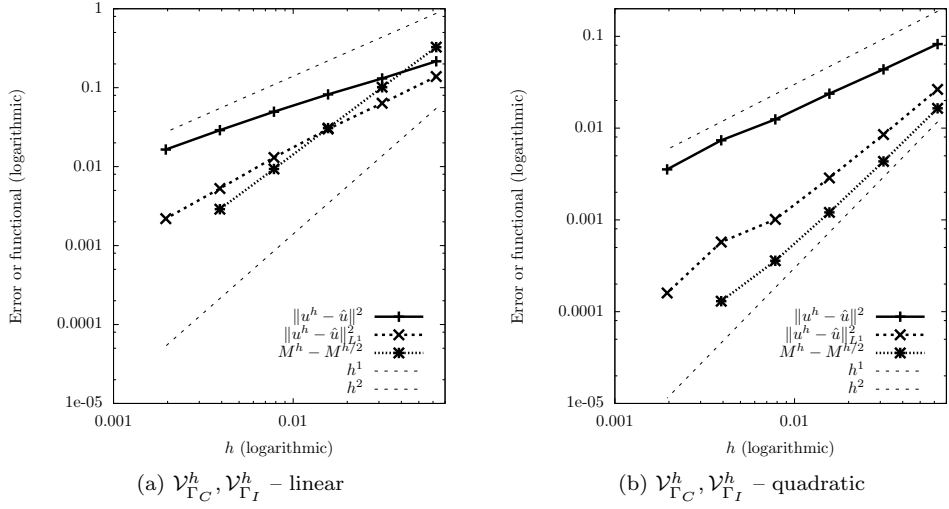

\centering
\subfloat[][$\cVh{\Gamma_C},\cVh{\Gamma_I}$ -- linear]{\resizebox{0.485\textwidth}{!}{\input{1shock_scaled_hhalf_all_linear.texx}}}\quad
\subfloat[][$\cVh{\Gamma_C},\cVh{\Gamma_I}$ -- quadratic]{\resizebox{0.485\textwidth}{!}{\input{1shock_scaled_hhalf_u_linear_rest_quad.texx}}}
\caption[]{Convergence results for Example 1 with linear $\cU^{\smallh}$. \blue{Here, $u^{\smallh}$ denotes the obtained approximation, $\hu$ is the exact solution, $M^{\smallh}$ denotes the obtained minimal value of the functional, $\hcF$, on a mesh with a parameter $h$.}}\label{fig:ex1_conv}
\end{figure}

\begin{figure}
\centering
\subfloat{\includegraphics[width=0.40\textwidth]{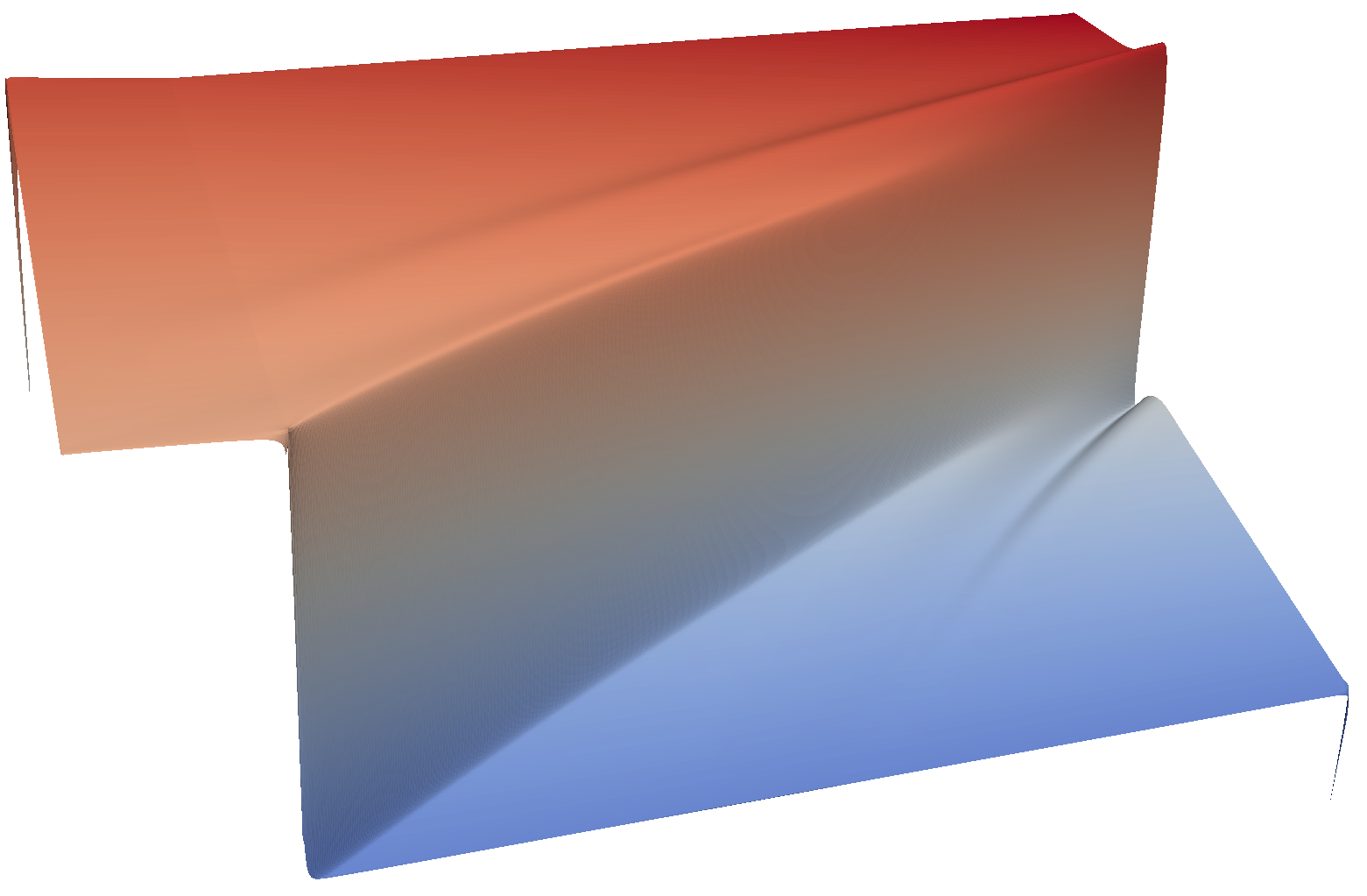}}\quad\quad
\subfloat{\includegraphics[width=0.46\textwidth]{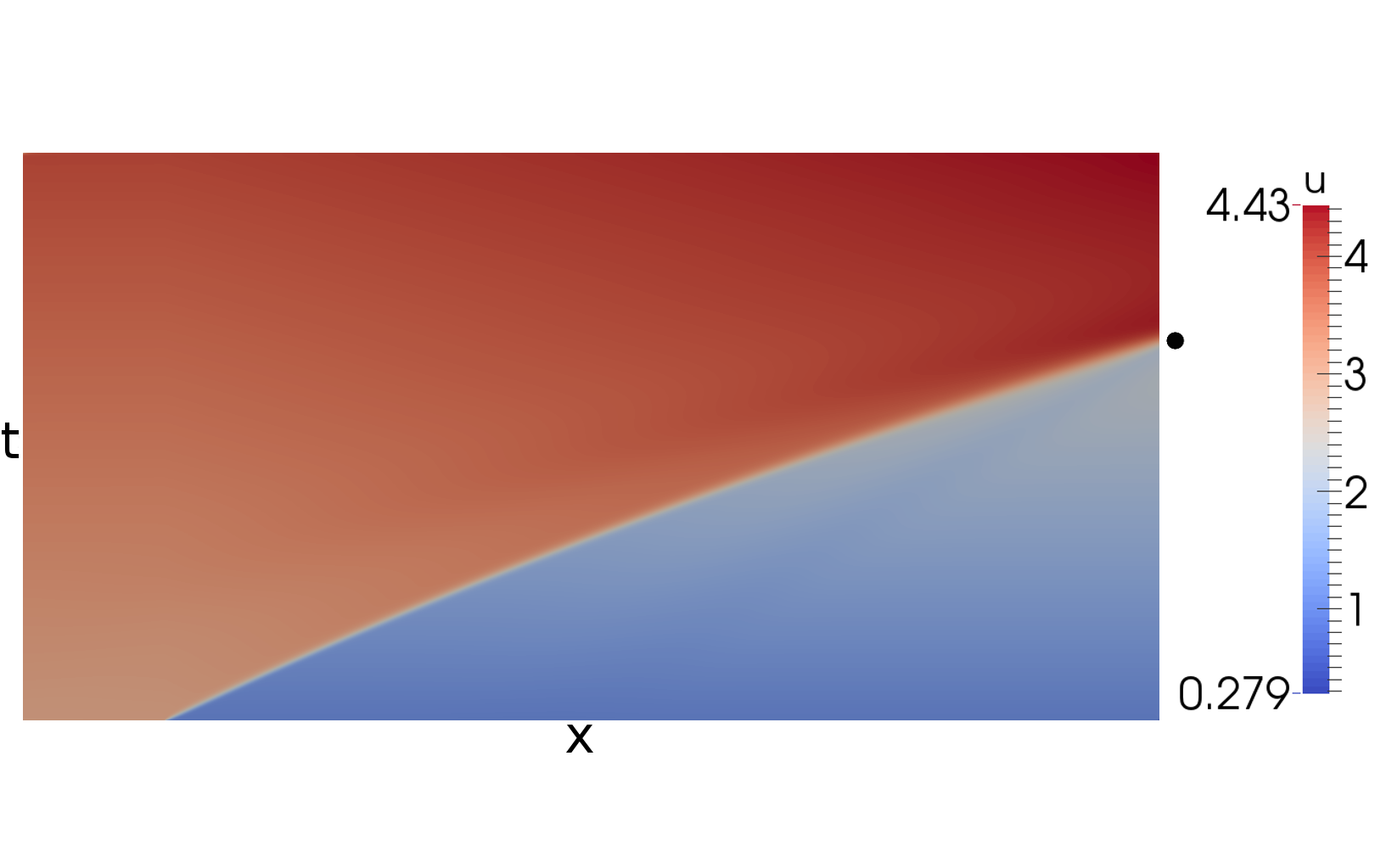}}\\
\subfloat{\includegraphics[width=0.40\textwidth]{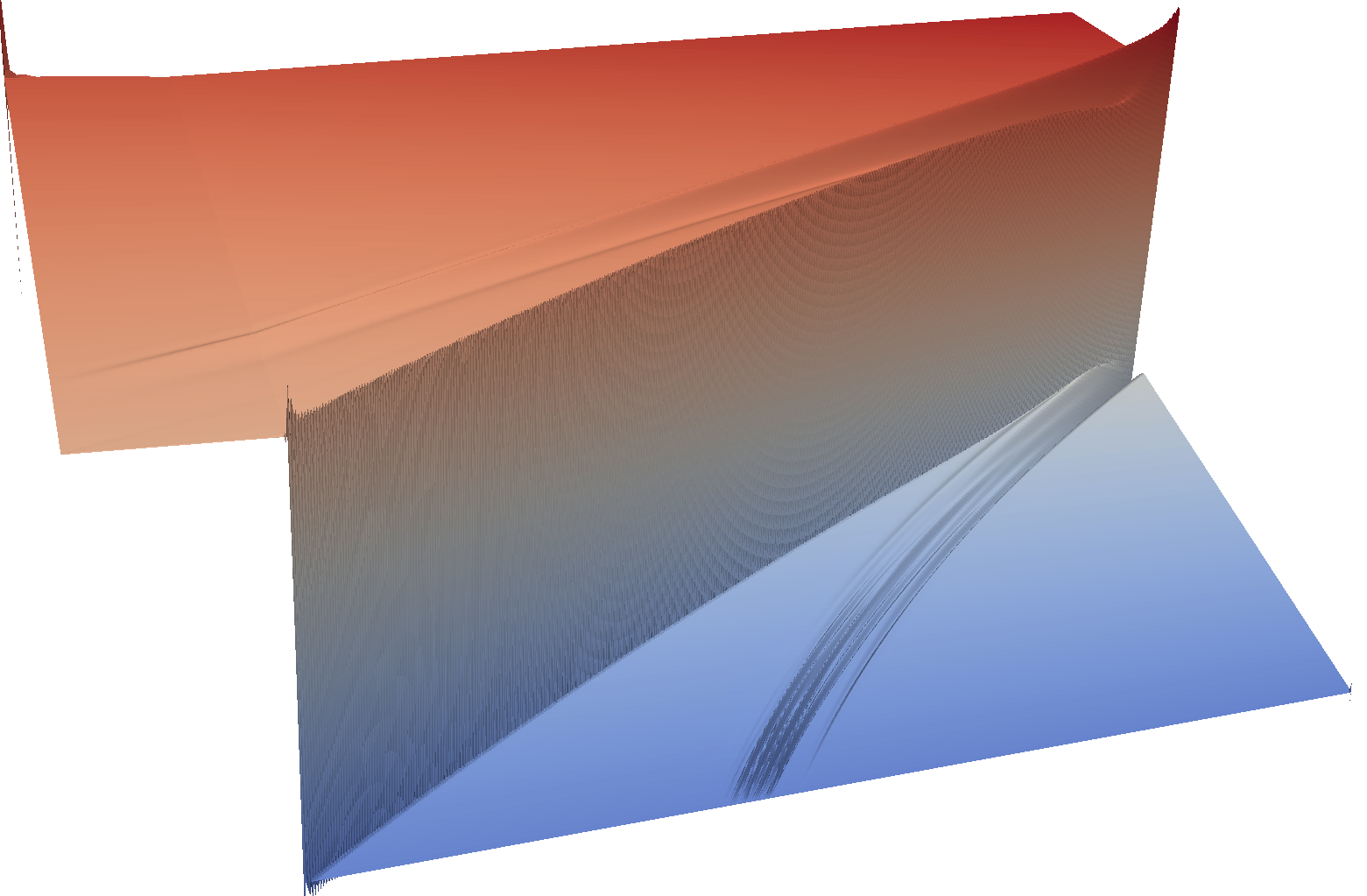}}\quad\quad
\subfloat{\includegraphics[width=0.46\textwidth]{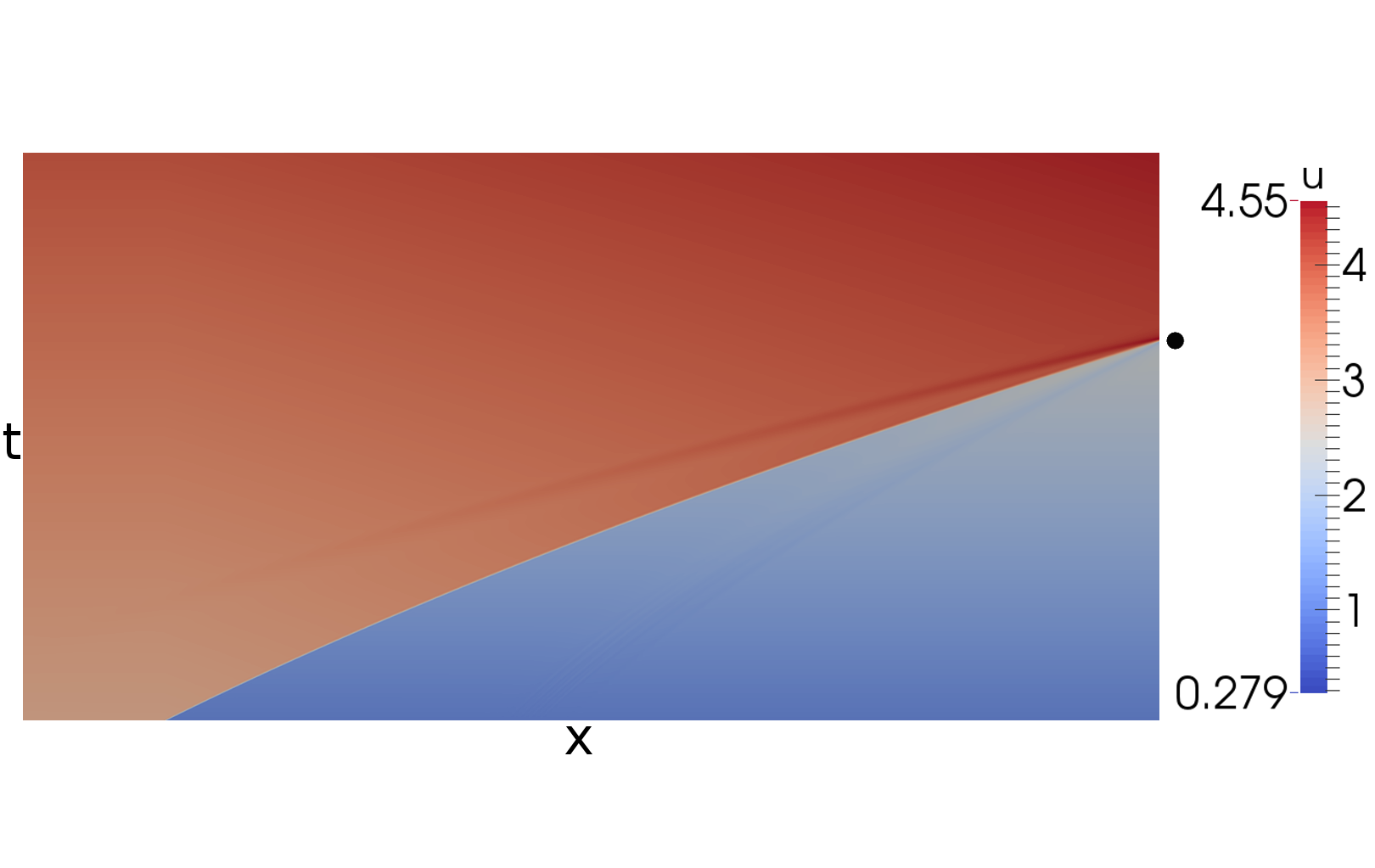}}
\caption[]{The approximation, $u^{\smallh}$, obtained from Example 1 on the finest mesh with linear $\cU^{\smallh}$; $\cVh{\Gamma_C}$,$\cVh{\Gamma_I}$ are linear on the top and quadratic on the bottom. The black dot, $\bullet$, shows where the shock exists the domain in the exact solution, $\hu$.}\label{fig:ex1_sol}
\end{figure}

Consider \eqref{eq:balance} with
\[
r = \begin{cases} 1, &x \le 0\\ 2, &x > 0 \end{cases},\quad\quad g = \begin{cases} 3, &t = 0,\, x \le 0\\ 1, &t = 0,\, x > 0\\ t + 3, &x=-0.25 \end{cases}.
\]
Convergence of the functional values and the approximations obtained by the method are demonstrated in \cref{fig:ex1_conv} for linear and quadratic finite elements. Notice that, in both cases, similar to the methods for conservation laws in \cite{2005HdivHyperbolic,OlsonPhDthesis}, the squared $L^2(\Omega)$ norm of the error approaches $\cO(h)$, which is the theoretically optimal rate \cite{BrennerFEM}. The functional values converge with a higher rate on the tested meshes, similar to \cite{2005HdivHyperbolic,OlsonPhDthesis}. These results align with the discussion in \cref{ssec:conv} that, in general, the functional can only provide a ``weak control'' of the $L^2(\Omega)$ norm and a respective uniform coercivity does not generally hold. Particularly, in terms of \cref{ssec:conv}, \cref{fig:ex1_conv} indicates that $\varkappa \approx 1$ and $\gamma \approx \halff$, providing a rate of $L^2$ convergence around the optimal $\halff$. 

\Cref{fig:ex1_sol} shows the resulting approximations in the two cases. Note that the method correctly captures the shock speed and its curved trajectory, which can be expected considering the convergence in \cref{fig:ex1_conv}. It is worth discussing the spikes in the upper-left and lower-right corners of the domain. Theoretically, such behavior can be linked to the fact that convergence in the $L^2(\Omega)$ norm is not significantly affected by such spikes. A more particular inspection suggests that the spikes can be linked to the specific Helmholtz decomposition and the associated elliptic PDEs in \cref{rem:stronggrad}. Namely, in view of \cref{rem:stronggrad}, the two corners (upper-left and lower-right) with the spikes are precisely where the Neumann and Dirichlet boundary conditions meet in the respective elliptic problems that define the components of the Helmholtz decomposition, resulting in a decreased quality of approximating these components close to the corners, which are important parts of formulation \eqref{eq:nonquadmin}. This is supported by the fact that increasing the order of the spaces for the components of the Helmholtz decomposition, $\cVh{\Gamma_C}$ and $\cVh{\Gamma_I}$, in \cref{fig:ex1_sol}, substantially decreases the spikes, since better approximations of these components are obtained. Observe that the corner spikes do not ``pollute'' the rest of the solution.

Our experience shows that the oscillations around the shock become narrower to accommodate the $L^2(\Omega)$ convergence and remain bounded in amplitude as $h$ decreases. The backward propagation of such oscillations results from formulation \eqref{eq:nonquadmin} being a global (space-time) minimization that currently does not employ any upwind techniques.

Observe, in \cref{fig:ex1_sol}, that the shock is noticeably more smeared for linear $\cVh{\Gamma_C},\cVh{\Gamma_I}$ compared to when they are quadratic and, accordingly, the backward propagating oscillations from the shock exit point are better diffused in the linear case. In our experience, the reduced numerical diffusion in the quadratic case is mostly due to the utilization of higher-order elements for $\cVh{\Gamma_I}$ and not so much due to the space $\cVh{\Gamma_C}$, whereas the reduction of the corner spikes benefits substantially from both $\cVh{\Gamma_I}$ and $\cVh{\Gamma_C}$ being of higher order. This aligns with the discussion about the ``regularizing effect'' of the space $\cVh{\Gamma_I}$ in the end of \cref{ssec:conv}.

\paragraph{Example 2 (a rarefaction wave)}

\begin{figure}
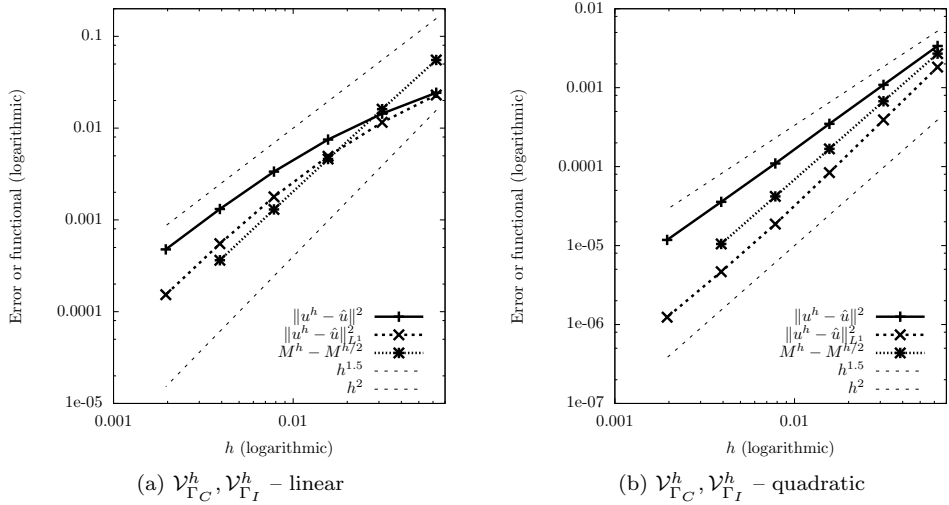

\centering
\subfloat[][$\cVh{\Gamma_C},\cVh{\Gamma_I}$ -- linear]{\resizebox{0.485\textwidth}{!}{\input{rarefaction_scaled_hhalf_all_linear.texx}}}\quad
\subfloat[][$\cVh{\Gamma_C},\cVh{\Gamma_I}$ -- quadratic]{\resizebox{0.485\textwidth}{!}{\input{rarefaction_scaled_hhalf_u_linear_rest_quad.texx}}}
\caption[]{Convergence results for Example 2 with linear $\cU^{\smallh}$. \blue{Here, $u^{\smallh}$ denotes the obtained approximation, $\hu$ is the exact solution, $M^{\smallh}$ denotes the obtained minimal value of the functional, $\hcF$, on a mesh with a parameter $h$.}}\label{fig:ex2_conv}
\end{figure}

\begin{figure}
\centering
\subfloat{\includegraphics[width=0.40\textwidth]{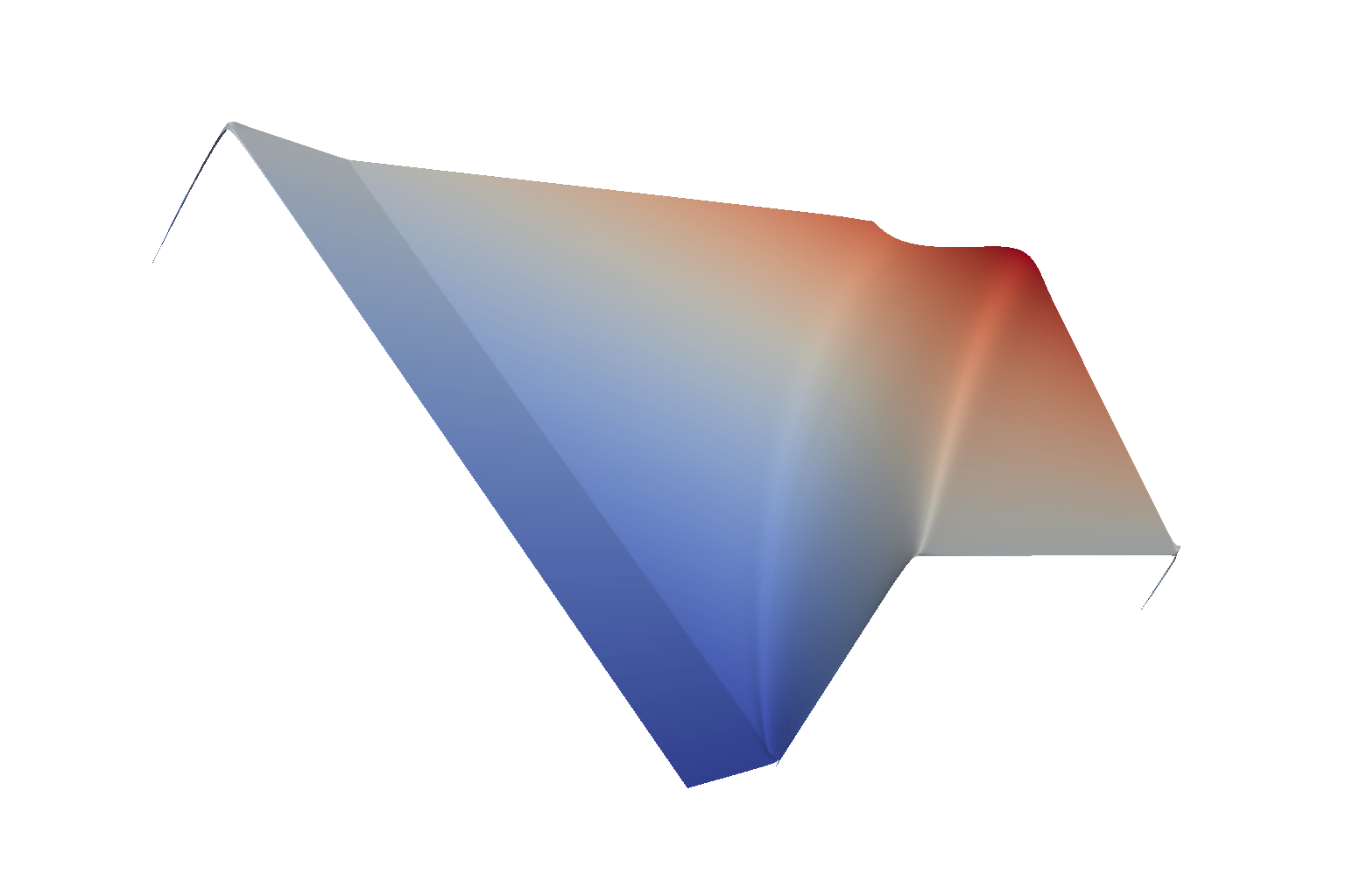}}\quad\quad
\subfloat{\includegraphics[width=0.46\textwidth]{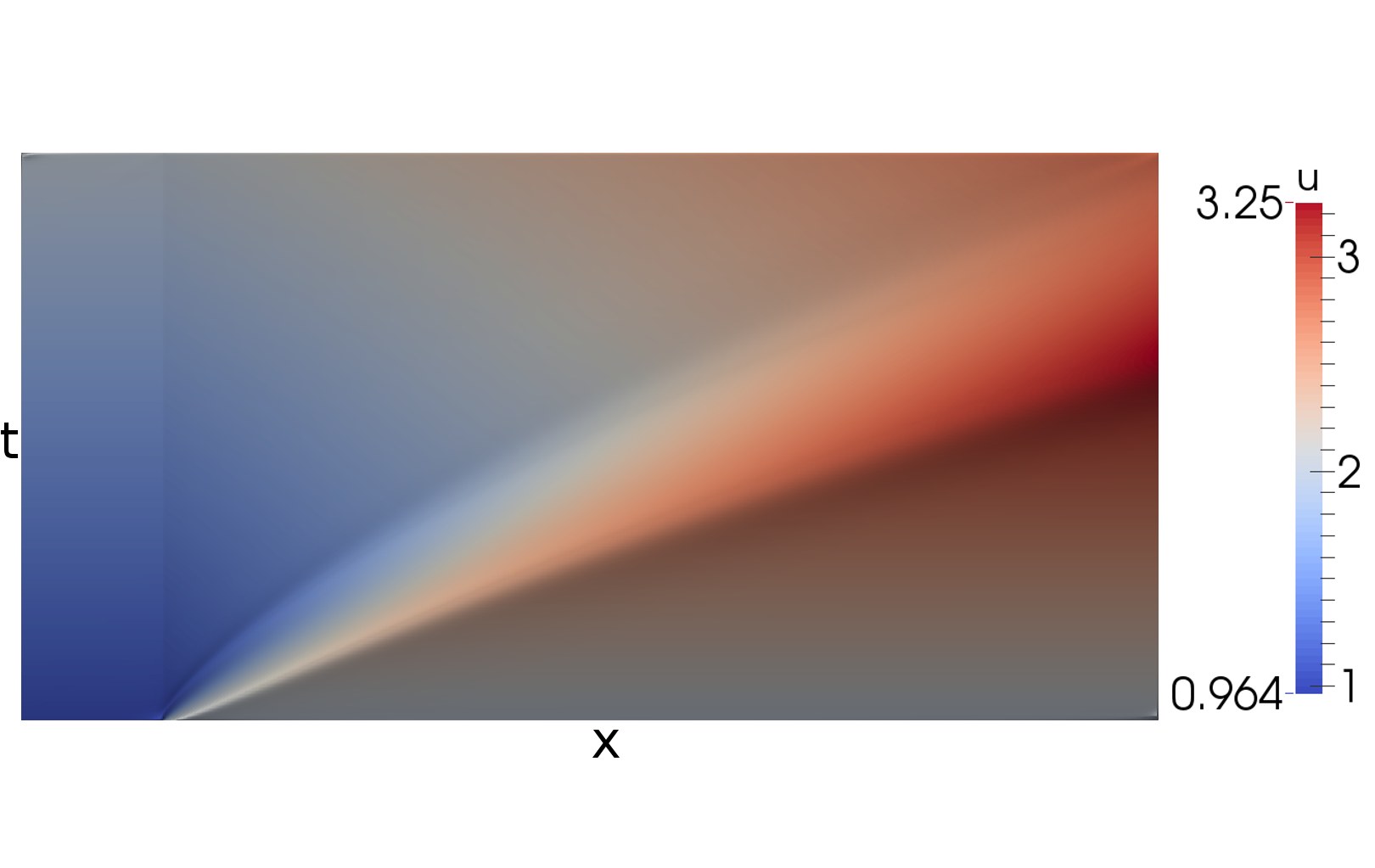}}\\
\subfloat{\includegraphics[width=0.40\textwidth]{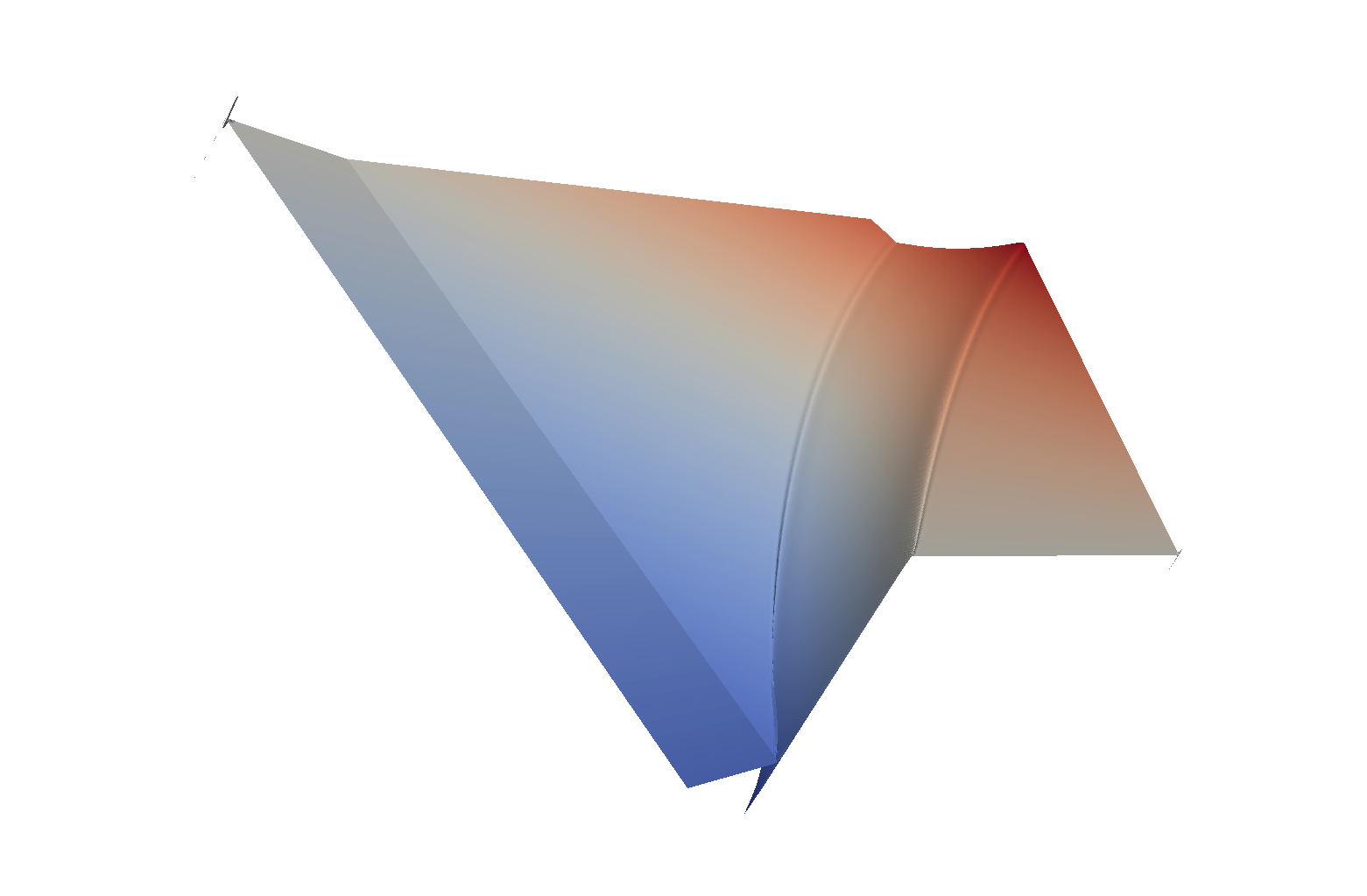}}\quad\quad
\subfloat{\includegraphics[width=0.46\textwidth]{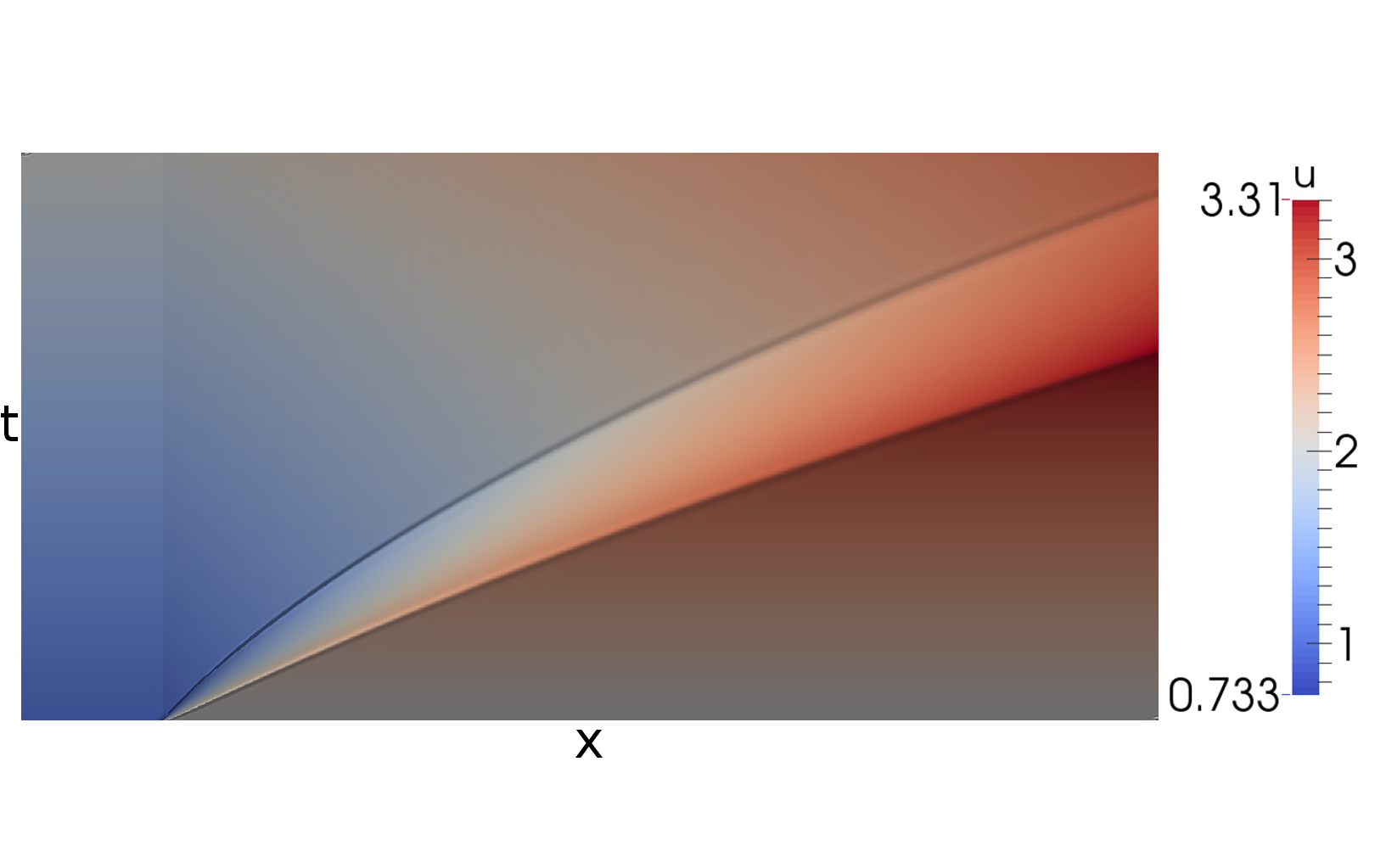}}
\caption[]{The approximation, $u^{\smallh}$, obtained from Example 2 on the finest mesh with linear $\cU^{\smallh}$; $\cVh{\Gamma_C}$,$\cVh{\Gamma_I}$ are linear on the top and quadratic on the bottom.}\label{fig:ex2_sol}
\end{figure}

Consider \eqref{eq:balance} with
\[
r = \begin{cases} 1, &x \le 0\\ 2, &x > 0 \end{cases},\quad\quad g = \begin{cases} 1, &t = 0,\, x \le 0\\ 2, &t = 0,\, x > 0\\ t + 1, &x=-0.25 \end{cases}.
\]
Results are shown in \cref{fig:ex2_conv,fig:ex2_sol}. The main challenge here is that such a setting is associated with an infinite multiplicity of the weak solutions \cite{LeVequeHCL}, where the rarefaction wave (associated with the respective ``characteristic fan'') is the unique physically admissible solution, which is of physical significance. Observe that the method approximates the admissible solution of interest. It is currently unclear if, in theory, this is an innate property of the formulation for all cases. Nevertheless, the numerical diffusion typically present in least-squares methods is indicative of the prospect of obtaining the ``vanishing diffusion'' (or ``viscosity'') solution. The convergence rate is suboptimal here, since the optimal decay rate of the squared $L^2(\Omega)$ norm of the error is $\cO(h^{2-\epsilon})$, for any small $\epsilon > 0$.

\paragraph{Example 3 (colliding shocks)}

\begin{figure}
\centering
\subfloat[][$\cVh{\Gamma_C},\cVh{\Gamma_I}$ -- linear]{\resizebox{0.485\textwidth}{!}{\input{2shocks_scaled_hhalf_all_linear.texx}}}\quad
\subfloat[][$\cVh{\Gamma_C},\cVh{\Gamma_I}$ -- quadratic]{\resizebox{0.485\textwidth}{!}{\input{2shocks_scaled_hhalf_u_linear_rest_quad.texx}}}
\caption[]{Convergence results for Example 3 with linear $\cU^{\smallh}$. \blue{Here, $u^{\smallh}$ denotes the obtained approximation, $\hu$ is the exact solution, $M^{\smallh}$ denotes the obtained minimal value of the functional, $\hcF$, on a mesh with a parameter $h$.}}\label{fig:ex3_conv}
\end{figure}

\begin{figure}
\centering
\subfloat{\includegraphics[width=0.40\textwidth]{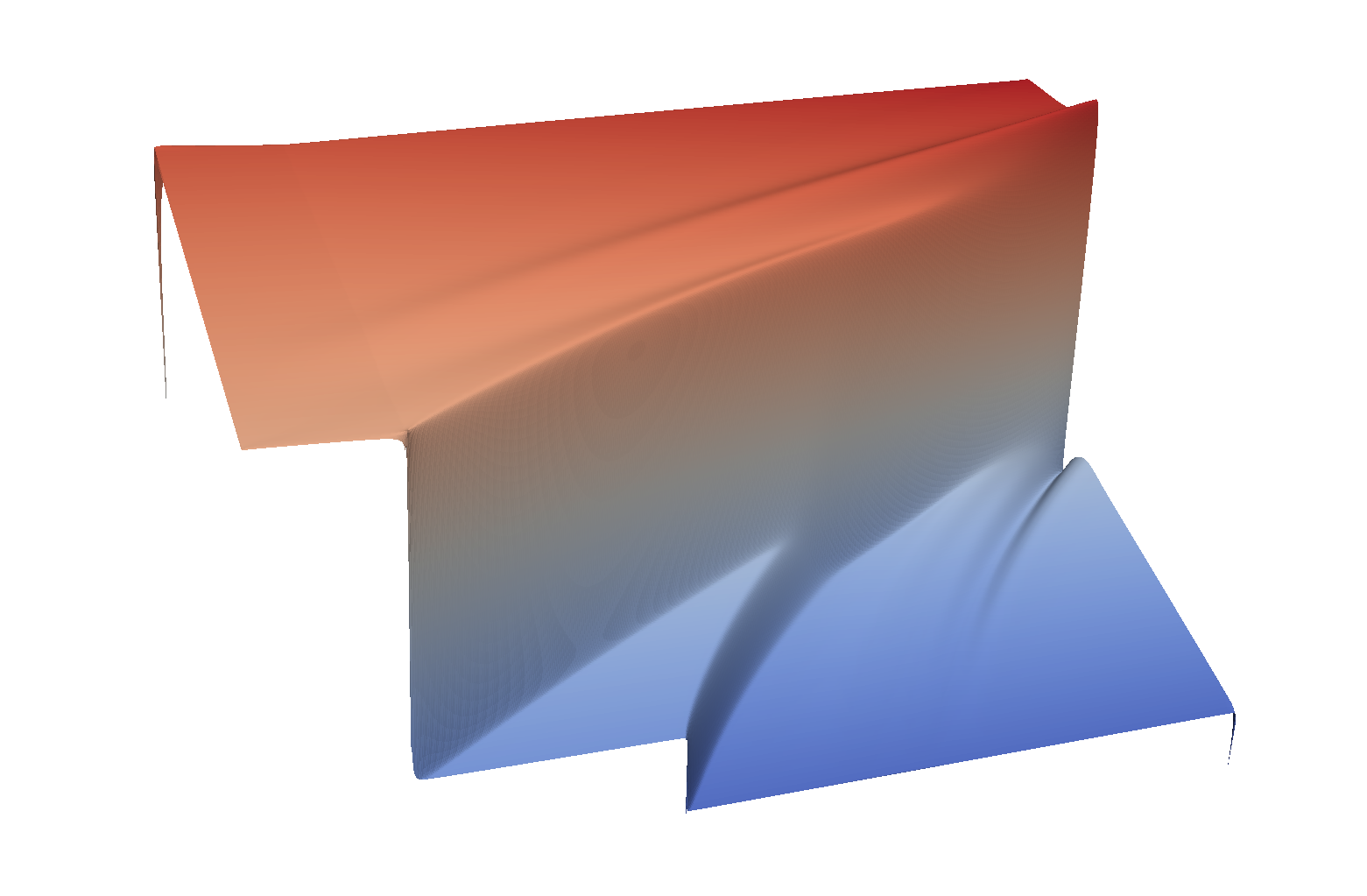}}\quad\quad
\subfloat{\includegraphics[width=0.46\textwidth]{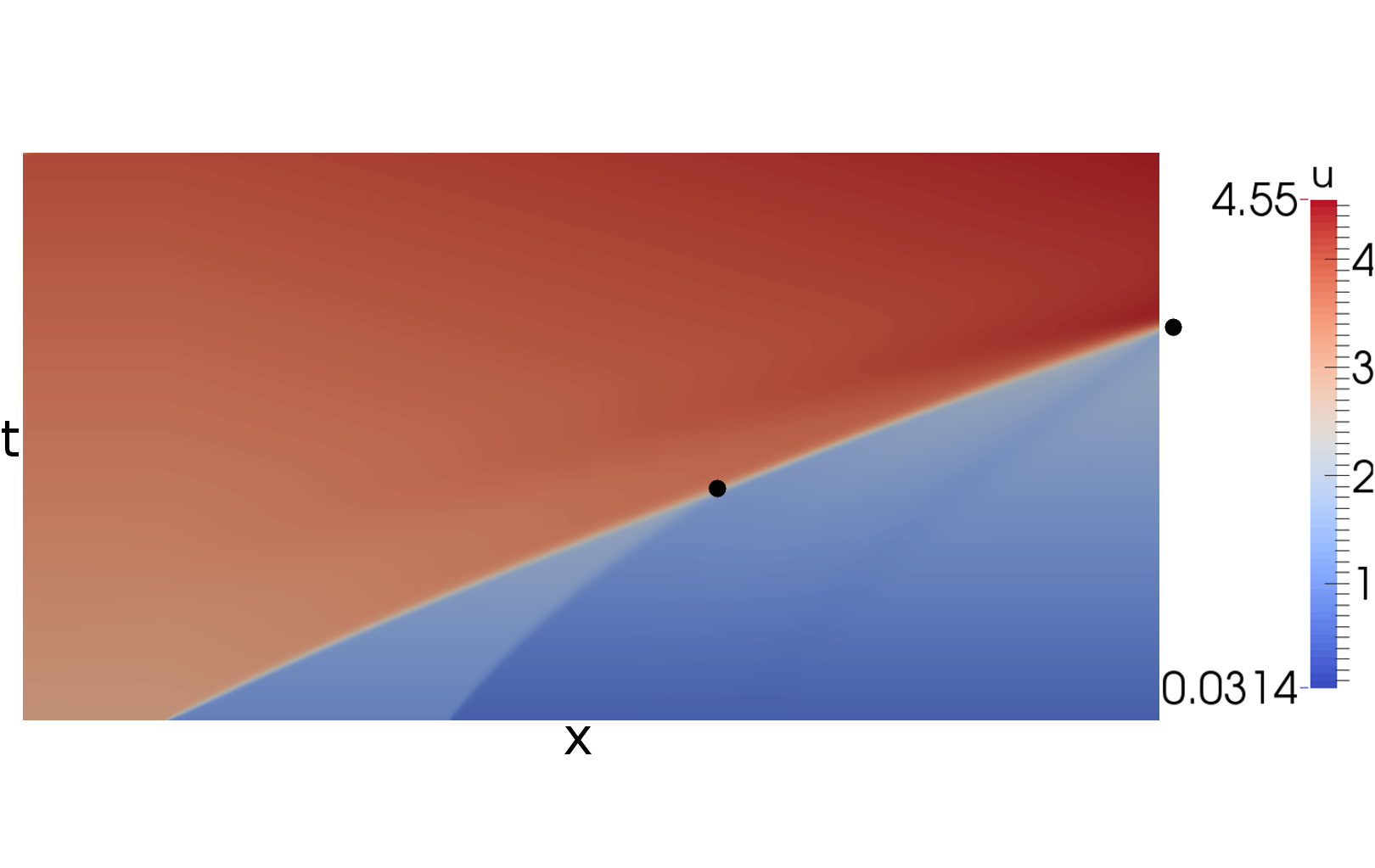}}\\
\subfloat{\includegraphics[width=0.40\textwidth]{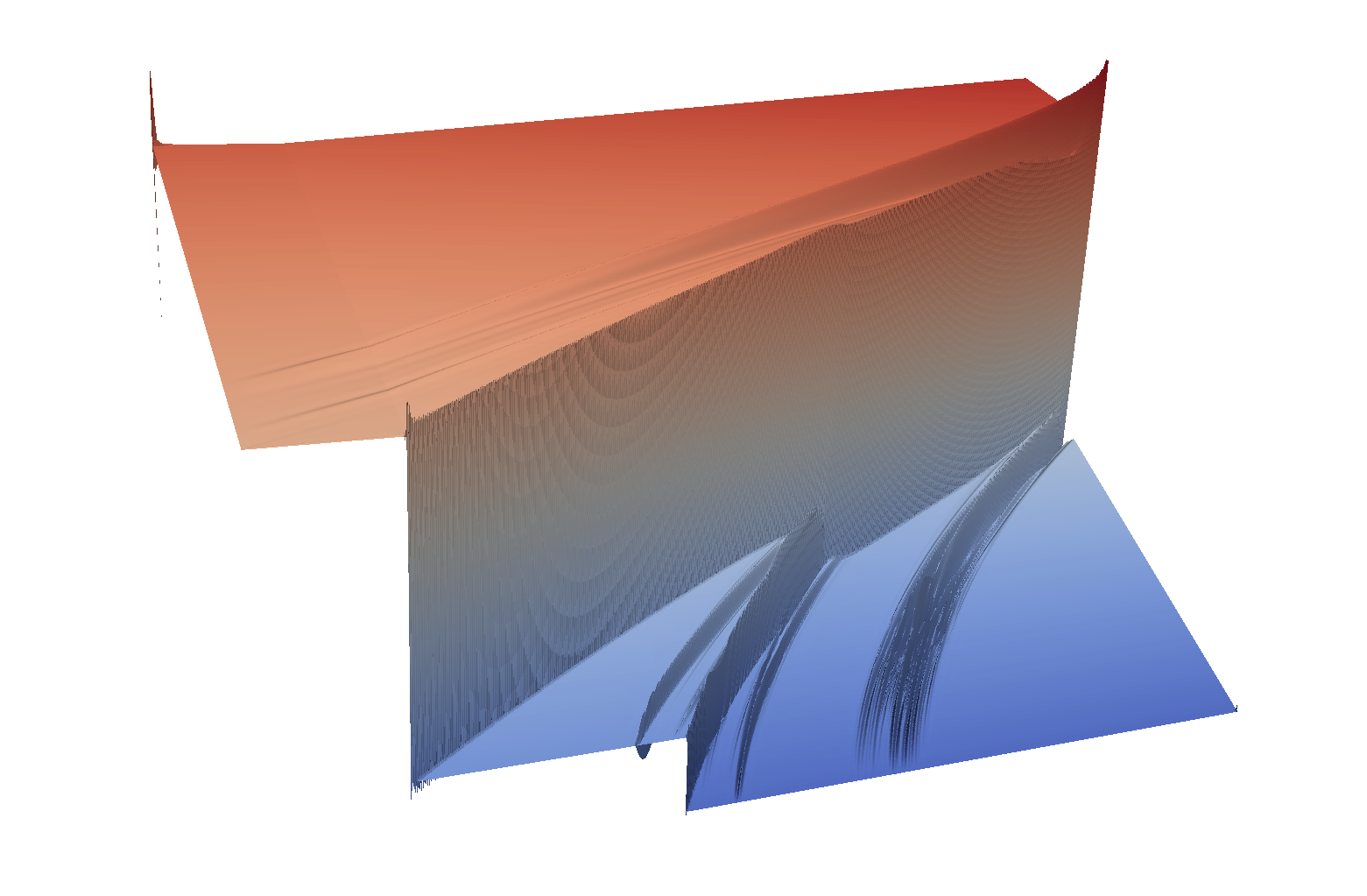}}\quad\quad
\subfloat{\includegraphics[width=0.46\textwidth]{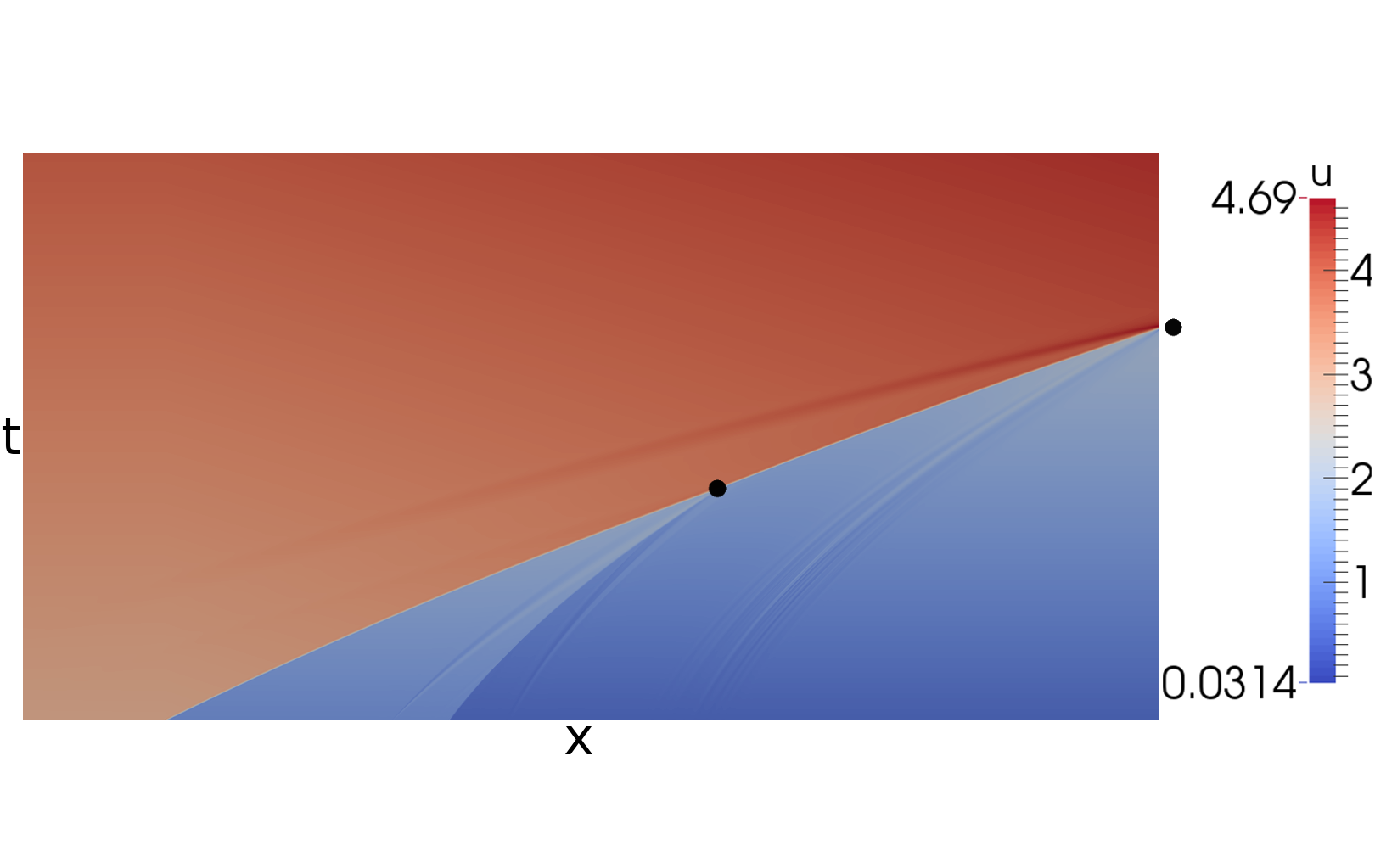}}
\caption[]{The approximation, $u^{\smallh}$, obtained from Example 3 on the finest mesh with linear $\cU^{\smallh}$; $\cVh{\Gamma_C}$,$\cVh{\Gamma_I}$ are linear on the top and quadratic on the bottom. The black dots, $\bullet$, show where the shocks collide and the resulting shock exists the domain in the exact solution, $\hu$.}\label{fig:ex3_sol}
\end{figure}

Consider \eqref{eq:balance} with
\[
r = \begin{cases} 1, &x \le 0\\ 2, &x > 0 \end{cases},\quad\quad g = \begin{cases} 3, &t = 0,\, x \le 0\\ 1, &t = 0,\, 0 < x \le 0.5\\ 0.5, &t = 0,\, x \ge 0.5\\ t + 3, &x=-0.25 \end{cases}.
\]
The respective results are shown in \cref{fig:ex3_conv,fig:ex3_sol}. Note that the method approaches the optimal convergence rate and it accurately captures the shocks and the collision point. The collision point (recall that the black dots, $\bullet$, are located at the collision and exit points) is considerably smeared when $\cVh{\Gamma_C}$,$\cVh{\Gamma_I}$ are linear, whereas this is less of an issue when they are quadratic.

The Gauss-Newton procedure utilizes a constant function as an initial guess on the coarsest mesh and, for every uniform refinement, the solution on the previous mesh is used as an initial guess. A very small tolerance is utilized, where the iteration is stopped when the change in the value of the functional, relative to the initial functional value, becomes less than $10^{-8}$. The number of Gauss-Newton iterations, for all cases and refinement levels, are shown in \cref{tbl:GNiter}. Note that the performance is expected to substantially improve by implementing adaptive mesh refinement in a nested iteration framework, which is a subject of future work; see \cref{sec:conclusions}. \blue{Since the methods are fundamentally related, the approach in this paper is expected to perform, in the context of ALR, similarly to \cite{2005HdivHyperbolic}, where with adaptive refinement the number of Gauss-Newton iterations is stable and small (at most 2 or 3) and there is, as desired, a substantially smaller increase in problem size across refinement levels in comparison to uniform refinement.} Also, in practice, a larger tolerance is sufficient and the stopping criterion can also be properly adjusted based on $h$ or the regularization, when it is utilized.

Finally, we note that using the regularized functional in \eqref{eq:reg}, with $\varepsilon = h$, recovers (both quantitatively and qualitatively) the numerical results shown in this section for the non-regularized functional. This indicates that the original functional \eqref{eq:cF}, posed on the discrete finite element spaces, possesses a corresponding implicit regularization, which can be related to the considerations in \cref{ssec:conv}.

\begin{table}
\centering
\begin{tabular}{ | l | l || r | r | r | r | r | r | }
\Xhline{2\arrayrulewidth}
Test case & \diagbox{Order}{Refs} & 0 & \hphantom{1}1 & \hphantom{1}2 & \hphantom{1}3 & 4 & 5 \\
\Xhline{4\arrayrulewidth}
\multirow{2}{*}{Example 1} & linear & 6&4&4&4&4&5 \\
\cline{2-8}
& quadratic & 8&4&4&4&5&10 \\
\Xhline{2\arrayrulewidth}
\multirow{2}{*}{Example 2} & linear & 5&3&3&3&3&3 \\
\cline{2-8}
& quadratic & 5&3&3&3&2&2 \\
\Xhline{2\arrayrulewidth}
\multirow{2}{*}{Example 3} & linear & 7&4&4&5&5&6 \\
\cline{2-8}
& quadratic & 10&5&6&8&10&5 \\
\Xcline{1-2}{2\arrayrulewidth}\Xcline{3-8}{4\arrayrulewidth}
\multicolumn{2}{c}{} & \multicolumn{6}{|c|}{\rule{0pt}{11pt}Number of iterations} \\
\Xcline{3-8}{2\arrayrulewidth}
\end{tabular}
\caption[]{\blue{Number of Gauss-Newton iterations as the mesh is refined uniformly, with a very small stopping tolerance of $10^{-8}$, for all cases and refinement levels. The space $\cU^{\smallh}$ is linear in all cases, while ``Order'' denotes the order of $\cVh{\Gamma_C}$ and $\cVh{\Gamma_I}$. ``Refs'' denotes the number of uniform refinements of the initial mesh.}}\label{tbl:GNiter}
\centering
\end{table}

\section{Conclusions and future work}
\label{sec:conclusions}

We proposed and studied a least-squares finite element formulation for hyperbolic balance laws that is based on the Helmholtz decomposition and is related to the notion of a weak solution. The ability of this approach to correctly approximate weak solutions, its convergence properties, and a special regularization were discussed; discrete convergence results were shown under mild assumptions; and numerical results were provided. The method demonstrates good convergence, shock capturing capabilities, and correctly obtains rarefaction (physically admissible) solutions to nonlinear PDEs.

There are many directions of future development. Particularly, adaptive mesh refinement in a nested iteration setting constitutes important follow-up work as it would contribute to the practical applicability of the method; extending the method to systems by utilizing a suitable Helmholtz decomposition is an important topic of future investigation; and generalizing the formulation for problems where the source term, $r$, in \eqref{eq:balancePDE} is allowed to depend linearly or nonlinearly on the unknown variable, $u$, would allow the consideration of more general hyperbolic equations. The $L^2(\Omega)$ norm convergence of the method is not a completely closed question. The regularized functional has further potential. Particularly, it can lead to more accessible proofs of convergence to the physically admissible solution and the construction of efficient linear solvers for the resulting linear systems.

\bibliographystyle{plain}
\bibliography{references}
\end{document}